\documentclass[10pt]{amsart}
\usepackage{times,amsmath,amsbsy,amssymb,amscd,mathrsfs}
\usepackage{slashbox}\usepackage{graphicx,subfigure,epstopdf,wrapfig,chemarrow}
\usepackage{multicol,multirow}
\usepackage{mathtools}
\usepackage[usenames,dvipsnames,svgnames,table]{xcolor}



\usepackage{comment,enumerate,multicol,xspace}

  \newcounter{mnote}
  \setcounter{mnote}{0}
  
  \let\oldmarginpar\marginpar
    \renewcommand\marginpar[1]{\-\oldmarginpar[\raggedleft\footnotesize #1]%
    {\raggedright\footnotesize #1}}

\newtheorem{theorem}{Theorem}[section]
\newtheorem{lemma}[theorem]{Lemma}

\newtheorem{proposition}[theorem]{Proposition}
\newtheorem{definition}[theorem]{Definition}
\newtheorem{example}[theorem]{Example}

\newtheorem{remark}[theorem]{Remark}

\newcommand{\mbb}{\mathbb}

\newcommand{\bs}{\boldsymbol}
\newcommand{\mcal}{\mathcal}
\newcommand{\curl}{{\rm curl\,}}
\renewcommand{\div}{\operatorname{div}}
\newcommand{\grad}{{\rm grad\,}}

\begin{document}
\title{MultiGrid Preconditioners for Mixed Finite Element Methods of Vector Laplacian}
\author{Long Chen, Yongke Wu, Lin Zhong, and Jie Zhou}\date{\today}
\thanks{L. Chen was supported by NSF Grant DMS-1418934. Y. Wu was supported by the National Natural Science Foundation of China (11501088) and partially supported by NSF Grant DMS-1115961. L. Zhong was supported by NSF Grant DMS-1115961 and DMS-1418934. J. Zhou was supported by  doctoral research project of Xiangtan University (09kzkz08050).}

\address[L.~Chen]{Department of Mathematics, University of California at Irvine, Irvine, CA 92697, USA}
\email{chenlong@math.uci.edu}

\address[Y.~Wu]{School of Mathematical Sciences, University of Electronic Science and Technology of China, Chengdu 611731, China.}
\email{wuyongke1982@sina.com}

\address[L.~Zhong]{Department of Mathematics, University of California at Irvine, Irvine, CA 92697, USA}
\email{lzhong1@uci.edu}

\address[J.~Zhou]{School of Mathematical and Computational Sciences, Xiangtan University, Xiangtan, 411105, China}
\email{xnuzj2004@163.com}

\subjclass[2010]{
65N55;   
65F10;   
65N22;   
65N30;   
}

\begin{abstract}
Due to the indefiniteness and poor spectral properties,
the discretized linear algebraic system of the vector Laplacian by mixed finite element methods is hard to solve. A block diagonal preconditioner has been developed and shown to be an effective preconditioner by Arnold, Falk, and Winther [Acta Numerica, 15:1--155, 2006]. The purpose of this paper is to propose alternative and effective block diagonal and block triangular preconditioners for solving this saddle point system. A variable V-cycle multigrid method with the standard point-wise Gauss-Seidel smoother is proved to be a good preconditioner for a discrete vector Laplacian operator. This multigrid solver will be further used to build preconditioners for the saddle point systems of the vector Laplacian and the Maxwell equations with divergent free constraint. The major benefit of our approach is that the point-wise Gauss-Seidel smoother is more algebraic and can be easily implemented as a black-box smoother. 
\end{abstract}

\keywords{Saddle point system, multigrid methods, mixed finite elements, vector Laplacian, Maxwell equations}
\maketitle

\section{Introduction}

Discretization of the vector Laplacian in spaces $\bs H_{0}(\curl)$ and $\bs H_{0}(\div)$ by mixed finite element methods is well-studied in~\cite{Arnold2006}. The discretized linear algebraic system is ill-conditioned and in the saddle point form which leads to the slow convergence of classical iterative methods as the size of the system becomes large. In~\cite{Arnold2006}, a block diagonal preconditioner has been developed and shown to be an effective preconditioner. The purpose of this paper is to present alternative and effective block diagonal and block triangular preconditioners for solving these saddle point systems.

Due to the similarity of the problems arising from spaces $\bs H_{0}(\curl)$ and $\bs H_{0}(\div)$, we use the mixed formulation of the vector Laplacian in $\bs H_{0}(\curl)$ as an example to illustrate our approach. Choosing appropriate finite element spaces $S_{h} \subset H_{0}^{1}$ (a vertex element space) and $ \bs U_{h} \subset \bs H_{0}(\curl)$ (an edge element space), the mixed formulation is: Find $\sigma_h \in S_{h}, \bs u_h\in \bs U_{h}$ such that
\begin{align*}
\left\{
\begin{aligned}
-(\sigma_h, \tau_h) + (\bs u_h, \grad \tau_h) &= 0 & \text{ for all } \tau_h \in S_{h},\\
(\grad \sigma_h, \bs v_h) + (\curl\bs  u_h, \curl\bs v_h) &=( \bs f, \bs v_{h}) & \text{ for all }\bs v_h\in \bs U_{h}.
\end{aligned}
\right.
\end{align*}
The corresponding matrix formulation is
\begin{equation}\label{matrix-form}
\begin{pmatrix}
-M_v & B\\
B^T & C^TM_fC
\end{pmatrix}
\begin{pmatrix}
\sigma_h\\
\bs u_h
\end{pmatrix}
=
\begin{pmatrix}
0\\
\bs f
\end{pmatrix}.
\end{equation}
Here $M_v$ and $M_f$ are mass matrices of the vertex element and the face element, respectively, $B^T$ corresponds to a scaled $\grad$ operator, and $C$ corresponds to the $\curl$ operator.

Based on the stability of \eqref{matrix-form} in $H_0^1\times \bs H_0(\curl)$ norm, in~\cite{Arnold2006}, a block diagonal preconditioner in the form
\begin{equation*}
\begin{pmatrix}
(I+ G^T M_e G)^{-1} &O \\
O& (I+ C^TM_fC)^{-1}
\end{pmatrix},
\end{equation*}
with $G = M_e^{-1}B^T$,
 is proposed and the preconditioned Krylov space method is shown to converge with optimal complexity. To compute the inverse operators in the diagonal, multigrid methods based on additive or multiplicative overlapping Schwarz smoothers ~\cite{Arnold.D;Falk.R;Winther.R2000}, multigrid methods based on Hiptimair smoothers \cite{Hiptmair.R1997,Hiptmair.R1999}, or HX auxiliary space preconditioner \cite{Hiptmair.R;Xu.J2007} can be used. In all these methods,  to achieve a mesh independent condition number, a special smoother taking care of the large kernel of the $\curl$ (or $\div$) differential operators is needed. 

In contrast, we shall apply multigrid methods with the standard point-wise  Gauss-Seidel (G-S) smoother to the Schur complement of the $(1,1)$ block
\begin{equation}\label{intro:Schur}
A = B^TM^{-1}_vB + C^TM_fC
\end{equation}
which is a matrix representation of the following identity of the vector Laplacian
 \begin{equation*}\label{vecLap}
- \Delta \bs u =  -\grad \div\bs u + \curl \curl \bs u.
\end{equation*}
In \eqref{intro:Schur}, the inverse of the mass matrix, i.e., $M^{-1}_v$ is dense. To be practical, the exact Schur complement can be replaced by an approximation
$$
\tilde A = B^T\tilde M^{-1}_vB + C^TM_fC,
$$
with $\tilde M_v$ an easy-to-invert matrix, e.g., the diagonal or a mass lumping of $M_v$.

We shall prove that a variable V-cycle multigrid method using the standard point-wise Gauss-Seidel smoother is a good preconditioner for the Schur complement $A$ or its approximation $\tilde A$. The major benefit of our approach is that the point-wise Gauss-Seidel smoother is more algebraic and can be easily implemented as a black-box smoother. The block smoothers proposed in~\cite{Arnold.D;Falk.R;Winther.R2000} for the $\bs H(\curl)$ and $\bs H(\div)$ problems, however, requires more geometric information and solving local problems in small patches.

Although the finite element spaces are nested and $A$ is symmetric positive definite, due to the inverse of the mass matrix, the bilinear forms in the coarse grid are non-inherited from the fine one. To overcome this difficulty, we shall follow the multigrid framework developed by Bramble, Pasciak, and Xu~\cite{Bramble.J;Pasciak.J;Xu.J1991}. In this framework, we need only to verify two conditions: (1) Regularity and approximation assumption; (2) Smoothing property. Since $A$ is symmetric and positive definite, the smoothing property of the Gauss-Seidel smoother is well known, see e.g.~\cite{Bramble.J;Pasciak.J1992}.
To prove the approximation property, we make use of the $L^2$-error estimates of mixed finite element methods established in~\cite{Arnold.D;Falk.R;Winther.R2000} and thus have to assume the full regularity of elliptic equations. Numerically our method works well for the case when the full regularity does not hold. With the approximation and smoothing properties, we show that one V-cycle is an effective preconditioner. As noticed in \cite{Bramble.J;Pasciak.J1992}, W-cycle or two V-cycles may not be a valid preconditioner as the corresponding operator may not be positive definite. In other words, the proposed multigrid method for the Schur complement cannot be used as an iterative method but one V-cycle can be used as an effective preconditioner.

The multigrid preconditioner for $\tilde A$ will be used to build preconditioners for \eqref{matrix-form}. We propose a block diagonal preconditioner and a block triangular preconditioner:
\begin{equation}\label{preconditioner}
\begin{pmatrix}
M_v^{-1} &O \\
O& \tilde A^{-1}
\end{pmatrix},
\quad \text{and }
\begin{pmatrix}
I   &  \tilde M_v^{-1} B \\
0  &   I
 \end{pmatrix}
\begin{pmatrix}
-\tilde M_v    &   0  \\
B^T   &  \tilde A
 \end{pmatrix}
^{-1}.
\end{equation}
The action $M_{v}^{-1}$ can be further approximated by $\tilde M_v^{-1}$ and $\tilde A^{-1}$ by one V-cycle multigrid.
Following the framework of \cite{MardalWinther2004}, we prove that the preconditioned system using these two preconditioners has a uniformly bounded conditional number by establishing a new stability result of the saddle point system \eqref{matrix-form} in the $\|\cdot \|\times \|\cdot\|_A$ norm.

As an application we further consider a prototype of Maxwell equations with divergence-free constraint
$$
\curl \curl \bs u = \bs f, \; \div \bs u = 0, \; \text{ in } \Omega.
$$
A regularized system obtained by the augmented Lagrangian method~\cite{Fortin.Michel;Glowinski.Roland1983} has the form
\begin{equation}\label{MaxABB}
\begin{pmatrix}
A & B^T\\
B & O
\end{pmatrix},
\end{equation}
where $A$ is the vector Laplacian in $\bs H_0(\curl)$. We then construct a block diagonal preconditioner and a block triangular preconditioner
\begin{equation}
\begin{pmatrix}
A^{-1} &0 \\
0&  M_v^{-1}
\end{pmatrix},
\quad
\text{ and }
\,
\begin{pmatrix}
I & G\\
O & -\tilde M_v^{-1}A_p
\end{pmatrix}
\begin{pmatrix}
\tilde A & O\\
B & A_p
\end{pmatrix}^{-1},
\end{equation}
and prove that they are uniformly bounded preconditioners for the Maxwell system \eqref{MaxABB}. Our preconditioners are new and different with the  solver proposed in~\cite{ChenXuZou2010}.
%

The paper is organized as follows. In Section 2, we introduce the discretization of the mixed formulation of the vector Laplacian, and prove stability results. In Section 3, we consider the multigrid methods for the discrete vector Laplacian and verify the approximation and smoothing properties. In Section 4, we propose the uniform preconditioner for the vector Laplacian and apply to Maxwell equation in the saddle point form. At last, we support our theoretical results with numerical experiments.

\section{Discretization}
In this section, we first recall the function spaces and finite element spaces, and then present discrete formulations of the vector Laplacian problems in both space $\bs H_{0}(\curl)$ and space $\bs H_{0}(\div)$. We shall define a new norm using the Schur complement and prove corresponding Poincar\'e inequalities and inverse inequalities.

We assume that $\Omega$ is a bounded and convex polyhedron in $\mbb R^3$ with a simple topology (homomorphism to a ball), and it is triangulated into a mesh $\mcal T_h$ with size $h$.  We assume that the mesh $\mcal T_h$ belongs to a shape regular and quasi-uniform family.

\subsection{Function Spaces and Finite Element Spaces}We use $L^2(\Omega)$ to denote the space of all square integrable scalar or vector functions on $\Omega$ and $(\cdot,\cdot)$ for both the scalar and vector $L^2$-inner product.
Given a differential operator $\mcal D = \grad, \curl,$ or $\div$, we introduce the Sobolev space $H(\mcal D,\Omega) = \{v\in L^2(\Omega), \mcal D v\in L^2(\Omega)\}$. For $\mcal D=\grad$, $H(\grad,\Omega)$ is the standard $H^1(\Omega)$. For simplicity, we will suppress the domain $\Omega$ in the notation. Let $\bs n$ be the unit outwards normal vector of $\partial \Omega$. We further introduce the following Sobolev spaces on domain $\Omega$ with homogenous traces:
\begin{eqnarray*}
H_0^1 & = & \{ u \in  H^1:  u = 0 \hbox{~~on~} {\partial \Omega}\},\\
\bs H_0(\curl) & = & \{\bs u \in \bs H(\curl): \bs u\times \bs n = 0 \hbox{~~on~} {\partial \Omega}\},\\
\bs H_0(\div) & = & \{\bs u \in \bs H(\div): \bs u\cdot \bs n =0 \hbox{~~on~} {\partial \Omega}\},\\
\text{ and } \quad L_0^2 & = & \{u \in L^2: \int_{\Omega} u ~dx= 0\}.{\large 
}\end{eqnarray*}
Then, let us recall the following finite element spaces:
\begin{itemize}
\item $S_h\subset H^1_0$ is the well-known Lagrange elements, i.e., continuous and piecewise polynomials,
\item $\bs U_h\subset \bs H_0(\curl)$ is the edge element space~\cite{Nedelec.J1980,Nedelec.J1986},
\item  $\bs V_h\subset \bs H_0(\div)$ is the face element space~\cite{Raviart.P;Thomas.J1977,Nedelec.J1980,Brezzi.F;Douglas.J;Marini.L1985,Nedelec.J1986,Brezzi.F;Douglas.J;Duran.R;Fortin.M1987,Brezzi.F;Fortin.M1991},
\item $W_h\subset L^2_0$ is discontinuous and piecewise polynomial space.
\end{itemize}

To discretize the vector Laplacian problem posed in $\bs H_{0}(\div)$ or $\bs H_{0}(\curl)$, we start from the following de Rham complex
$$0{\longrightarrow}H^1_0 \stackrel{\grad}{\longrightarrow} \bs H_0(\curl) \stackrel{\curl}{\longrightarrow} \bs H_0(\div) \stackrel{\div}{\longrightarrow} L^2_0{\longrightarrow}0.$$
We choose appropriate degrees and types of finite element spaces such that the discrete de Rham complex holds
\begin{equation}\label{exact}
0{\longrightarrow}S_h \stackrel{\grad}{\longrightarrow} \bs U_h \stackrel{\curl}{\longrightarrow} \bs V_h \stackrel{\div}{\longrightarrow} W_h{\longrightarrow}0.
\end{equation}
Important examples are: $S_{h}$ is the linear Lagrange element; $\bs U_{h}$ is the lowest order Nedelec edge element; $\bs V_{h}$ is the lowest order Raviart-Thomas element, and  $W_{h}$ is the piecewise constant.

We now define weak differential operators and introduce the following exact sequence in the reversed ordering:
\begin{equation}\label{weakexact}
0{\longleftarrow}S_h \stackrel{\div_{h}}{\longleftarrow} \bs U_h \stackrel{\curl_{h}}{\longleftarrow} \bs V_h \stackrel{\grad_{h}}{\longleftarrow} W_h{\longleftarrow}0.
\end{equation}
The weak divergence $\div_{h}: \bs U_{h} \to S_h$ is defined as the adjoint of $-\grad$ operator in the $L^2$-inner product, i.e., $\div_{h}\bs  w_h \in S_h$, s.t.,
\begin{equation}\label{weakdiv}
(\div_{h} \bs w_h, v_h) : = - (\bs w_h, \grad v_h)\quad \text{ for all } v_h\in S_h.
\end{equation}
Weak $\curl$ operator $\curl_{h}$ and weak $\grad$ operator $\grad_{h}$ are defined similarly.
For a given $\bs w_{h} \in \bs V_{h}$, define $\curl_{h}\bs w_{h}\in \bs U_{h}$ as
\begin{equation}\label{weakcurl}
(\curl_{h}\bs  w_h,\bs v_h) : =  (\bs w_h, \curl\bs v_h)\quad \text{ for all }\bs v_h\in \bs U_h.
\end{equation}
For a given $w_{h} \in \bs W_{h}$, define $\grad_{h}w_{h}\in \bs V_{h}$ as
\begin{equation}\label{weakgrad}
(\grad_{h}  w_h, \bs v_h) : = - ( w_h, \div\bs  v_h)\quad \text{ for all } \bs v_h\in \bs V_h.
\end{equation}
In the limiting case when $h\to 0$, these weak differential operators becomes the so-called co-differential operators, c.f. \cite{Arnold2006}, and will be denoted by $\mcal D^w$. 

The exactness of~\eqref{weakexact} can be easily verified by the definition and the exactness of~\eqref{exact}. Note that the inverse of mass matrices will be involved when computing the weak differential operators and thus they are global operators.


We introduce the null space of differential operators:
$$
Z_h^c = \bs U_h \cap \ker(\curl), \quad \text{ and } \, Z_h^d = \bs V_h \cap \ker(\div),
$$
and the null space of weak differential operators 
$$
K_h^c = \bs U_h \cap \ker(\div_h), \quad \text{ and } \, K_h^d = \bs V_h \cap \ker(\curl_h). 
$$
Similar notation $Z^c, Z^d, K^c, K^d$ will be used for the null spaces in the continuous level when the subscript $h$ is skipped. 

According to the exact sequence \eqref{exact}, we have 
the discrete Hodge decompositions \cite{Arnold2006}:
\begin{align*}
\bs U_{h} &= Z_h^c \oplus^{\bot} K_h^c = \grad S_{h} \oplus^{\bot} \curl_{h} \bs V_{h},\\
\bs V_{h} &= Z_h^d \oplus^{\bot} K_h^d = \curl \bs U_{h} \oplus^{\bot} \grad_{h}  W_h.
\end{align*}
The notation $\oplus^{\bot}$ stands for the $L^2$ orthogonal decomposition. These discrete version of Hodge decompositions play an important role in the  analysis.

We update the exact sequences as:
\begin{equation}\label{exactZK}
0{\longrightarrow}S_h \stackrel{\grad}{\longrightarrow} Z_h^c\oplus K_h^c \stackrel{\curl}{\longrightarrow} Z_h^d\oplus K_h^d \stackrel{\div}{\longrightarrow} W_h{\longrightarrow}0,
\end{equation}
and
\begin{equation}\label{exactZK}
0{\longleftarrow}S_h \stackrel{\div_{h}}{\longleftarrow} Z_h^c\oplus K_h^c \stackrel{\curl_h}{\longleftarrow} Z_h^d\oplus K_h^d \stackrel{\grad_{h}}{\longleftarrow} W_h{\longleftarrow}0.
\end{equation}
The space in the end of the arrow is the range of the operator above and in the beginning is the real domain. The precise characterization of the null space $Z_h$ or $K_h$ can be found by tracing back of the corresponding operators.

\subsection{Discrete Formulations of Vector Laplacian.}

 On the continuous level, the mixed formulation of the vector Laplacian in space $\bs H_{0}(\curl)$ is: Find $\sigma \in H_{0}^{1}, \bs u\in \bs H_{0}(\curl)$ such that
\begin{equation}\label{mixvecLap_continuous}
\left\{
\begin{aligned}
-(\sigma, \tau) + (\bs u, \grad \tau) &= 0 & \text{ for all } \tau \in H_{0}^{1},\\
(\grad \sigma, \bs v) + (\curl\bs u, \curl\bs v) &= (\bs f, \bs v) & \text{ for all } \bs v\in \bs H_{0}(\curl).
\end{aligned}
\right.
\end{equation}
The problem \eqref{mixvecLap_continuous} on the discrete level is: Find $\sigma_h \in S_{h},\bs u_h\in \bs U_{h}$ such that
\begin{equation}\label{mixvecLap-discrete}
\left\{
\begin{aligned}
-(\sigma_h, \tau_h) + (\bs u_h, \grad \tau_h) &= 0 & \text{ for all } \tau_h \in S_{h},\\
(\grad \sigma_h, \bs v_h) + (\curl\bs u_h, \curl \bs v_h) &=(\bs f, \bs v_{h}) & \text{ for all } \bs v_h\in \bs U_{h}.
\end{aligned}
\right.
\end{equation}

Note that the first equation of ~\eqref{mixvecLap-discrete} can be interpreted as $\sigma_h = -\div_{h}\bs u_h$ and in the second equation of ~\eqref{mixvecLap-discrete} the term $(\grad \sigma_h,\bs v_h) = - (\sigma_h, \div_{h}\bs v_h)$. After eliminating $\sigma_h$ from the first equation, we can write the discrete vector Laplacian for edge elements as
\begin{equation}\label{disvecLap}
-\Delta_h^{c}  \bs u_h : = \curl_{h} \curl \bs u_h - \grad \div_{h} \bs u_h,
\end{equation}
which is a discretization of the identity
 \begin{equation*}
- \Delta \bs u = \curl \curl \bs u - \grad \div\bs u.
\end{equation*}

Choosing appropriate bases for the finite element spaces, we can represent the spaces $S_{h}$ and $\bs V_{h}$ by $\mbb R^{\dim S_{h}}$ and $\mbb R^{\dim \bs V_{h}}$ respectively. In the following, we shall use the same notation for the vector representation of a function if no ambiguity arises. Then we have the corresponding operator and matrix formulations as: $\mcal L_{h}^{c} :  S_{h}\times \bs U_{h} \rightarrow S_{h}'\times \bs U_{h}'$
\begin{equation}\label{matvecLap}
\mcal L_{h}^{c}
\begin{pmatrix}
\sigma_h\\
\bs u_h
\end{pmatrix}
:=
\begin{pmatrix}
-M_v & B\\
B^T & C^TM_fC
\end{pmatrix}
\begin{pmatrix}
\sigma_h\\
\bs u_h
\end{pmatrix}
=
\begin{pmatrix}
0\\
\bs f
\end{pmatrix}.
\end{equation}
Here $M_v, M_e$ and $M_f$ are mass matrices of the vertex element, edge element and the face element, respectively, $B^T = M_e G$ corresponds to a scaling of the $\grad$ operator $G$, and $C$ to the $\curl$ operator. We follow the convention of Stokes equations to reserve $B$ for the (negative) divergence operator. Note that to form the corresponding matrices of weak derivative operators, the inverse of mass matrices will be involved. The Schur complement
\begin{equation}\label{Ahc}
A_{h}^{c} = B^TM^{-1}_vB + C^TM_fC
\end{equation}
is the matrix representation of discrete vector Laplacian ~\eqref{disvecLap}. The system \eqref{matvecLap} can be reduced to the Schur complement equation
\begin{equation}\label{schur-complement}
A_{h}^{c}\bs  u_{h} = \bs f.
\end{equation}

Similarly, the mixed formulation of the vector Laplacian in space $\bs H_{0}(\div)$ is: Find $\bs\sigma \in \bs H_{0}(\curl),\bs u\in \bs H_{0}(\div)$ such that
\begin{equation}\label{mixvecLap_continuous_div}
\left\{
\begin{aligned}
-(\bs\sigma,\bs \tau) + (\bs u, \curl\bs \tau) &= 0 & \text{ for all }\bs \tau \in \bs H_{0}(\curl),\\
(\curl\bs \sigma,\bs v) + (\div\bs u, \div\bs v) &= (\bs f,\bs v) & \text{ for all } \bs v\in \bs H_{0}(\div).
\end{aligned}
\right.
\end{equation}
The corresponding discrete mixed formulation is:
 Find $\bs \sigma_{h} \in \bs U_{h},\bs u_{h}\in \bs V_{h}$ such that
\begin{equation}\label{mixvecLap-discrete-div}
\left\{
\begin{aligned}
-(\bs\sigma_{h},\bs \tau_{h}) + (\bs u_{h}, \curl \bs\tau_{h}) &= 0 & \text{ for all }\bs \tau_{h} \in \bs U_{h},\\
(\curl \bs\sigma_{h}, \bs v_{h}) + (\div\bs u_{h}, \div\bs v_{h}) &= (\bs f, \bs v_{h}) & \text{ for all } \bs v_{h}\in \bs V_{h}.
\end{aligned}
\right.
\end{equation}

Eliminating $\bs\sigma_{h}$ from the first equation of \eqref{mixvecLap-discrete-div}, we have the discrete vector Laplacian for face elements as
\begin{equation}\label{disvecLap-div}
-\Delta_h^{d} \bs u_h : = \curl \curl_{h} \bs u_h - \grad_{h} \div \bs u_h,
\end{equation}
and the operator and matrix formulations are: $\mcal L_{h}^{d} :  \bs U_{h}\times \bs V_{h} \rightarrow \bs U_{h}'\times \bs V_{h}'$
\begin{equation}\label{matvecLap-div}
\mcal L_{h}^{d}
\begin{pmatrix}
\bs\sigma_h\\
\bs u_h
\end{pmatrix}
:=
\begin{pmatrix}
-M_e & C^{T}\\
C & B^TM_tB
\end{pmatrix}
\begin{pmatrix}
\bs \sigma_h\\
\bs u_h
\end{pmatrix}
=
\begin{pmatrix}
0\\
\bs f
\end{pmatrix},
\end{equation}
where $M_t$ denotes the mass matrix of the discontinuous element. The Schur complement $A_{h}^{d} = CM_{e}^{-1}C^{T} + B^{T}M_t B$ is the matrix representation of discrete vector Laplacian ~\eqref{disvecLap-div}.
Similarly, the reduced equation of \eqref{matvecLap-div} is
\begin{equation}\label{schur-complement-div}
A_{h}^{d}\bs u_{h}=\bs f.
\end{equation}

We shall consider multigrid methods for solving \eqref{schur-complement} and \eqref{schur-complement-div} and use them to construct efficient preconditioners for the corresponding saddle point systems \eqref{matvecLap} and \eqref{matvecLap-div}, respectively.

\subsection{Discrete Poincar\'{e} Inequality and Inverse~Inequality}
In this subsection, we define the norms associated with the discrete vector Laplacian, and prove discrete Poincar\'{e} and inverse inequalities.

\begin{definition}\label{Ah_definition}
For $ \bs u_{h} \in \bs U_{h}$, define $\|\bs u_{h}\|_{A^{c}_{h}}^{2} =a^{c}_{h}(\bs u_{h},\bs u_{h})$, where the bilinear form  $a^{c}_{h}(\cdot, \cdot)$ is defined as
$$a^{c}_{h}( \bs u_{h},\bs  v_{h}) :=  (\curl\bs u_{h}, \curl\bs v_{h})+(\div_{h}\bs u_{h}, \div_{h}\bs v_{h}).$$
Similarly, for $\bs u_{h} \in \bs V_{h}$, define $\|\bs u_{h}\|_{A^{d}_{h}}^{2} =a^{d}_{h}(\bs u_{h}, \bs u_{h})$, where the bilinear form  $a^{d}_{h}(\cdot, \cdot)$ is defined as
$$a^{d}_{h}(\bs u_{h}, \bs v_{h}) := (\curl_{h} \bs u_{h}, \curl_{h}\bs  v_{h})+(\div\bs u_{h}, \div\bs v_{h}).$$
\end{definition}
\begin{lemma}[Discrete Poincar\'{e} ~Inequality]
\label{discrete poincare_lemma}
We have the following discrete Poincar\'{e} inequalities:
\begin{align}
\label{dP1} \|\bs u_h\|\lesssim \|\bs u_h\|_{A^{c}_h} \quad \hbox{for all} ~\bs u_h \in \bs U_h;\\
\label{dP2} \|\bs u_h\|\lesssim \|\bs u_h\|_{A^{d}_h} \quad \hbox{for all} ~\bs u_h \in \bs V_h.
\end{align}
\end{lemma}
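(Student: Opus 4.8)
The plan is to prove \eqref{dP1} via the discrete Hodge decomposition $\bs U_h = \grad S_h \oplus^{\bot} \curl_h\bs V_h = Z_h^c \oplus^{\bot} K_h^c$, treating the two $L^2$-orthogonal components separately, and then to obtain \eqref{dP2} by the dual argument. First I would write any $\bs u_h \in \bs U_h$ as $\bs u_h = \grad s_h + \bs\phi_h$ with $s_h \in S_h$ and $\bs\phi_h \in K_h^c$, the sum being $L^2$-orthogonal, so that $\|\bs u_h\|^2 = \|\grad s_h\|^2 + \|\bs\phi_h\|^2$. Because $\curl\grad s_h = 0$ and $\div_h\bs\phi_h = 0$, the differential operators separate the two pieces as well: $\curl\bs u_h = \curl\bs\phi_h$ and $\div_h\bs u_h = \div_h\grad s_h$. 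It therefore suffices to bound $\|\grad s_h\|$ by $\|\div_h\bs u_h\|$ and $\|\bs\phi_h\|$ by $\|\curl\bs\phi_h\|$, and then to add the two.

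The gradient part is elementary. Using the definition \eqref{weakdiv} of $\div_h$ with test function $s_h$ together with $\div_h\bs u_h = \div_h\grad s_h$, I obtain $\|\grad s_h\|^2 = -(\div_h\grad s_h, s_h) = -(\div_h\bs u_h, s_h) \le \|\div_h\bs u_h\|\,\|s_h\|$. The standard scalar Poincar\'e inequality on $H_0^1$ gives $\|s_h\| \lesssim \|\grad s_h\|$, and cancelling one factor yields $\|\grad s_h\| \lesssim \|\div_h\bs u_h\|$.

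The solenoidal part is the crux, and it amounts to the discrete Poincar\'e inequality $\|\bs\phi_h\| \lesssim \|\curl\bs\phi_h\|$ on $K_h^c$, uniformly in $h$. I would compare $\bs\phi_h$ with its continuous solenoidal part: the continuous Hodge decomposition writes $\bs\phi_h = \grad p + \bs\psi$ with $p \in H_0^1$ and $\bs\psi$ $L^2$-orthogonal to $\grad H_0^1$, so that $\curl\bs\psi = \curl\bs\phi_h$, and the continuous Poincar\'e inequality gives $\|\bs\psi\| \lesssim \|\curl\bs\psi\| = \|\curl\bs\phi_h\|$; convexity of $\Omega$ upgrades this to the elliptic regularity estimate $\|\bs\psi\|_1 \lesssim \|\curl\bs\phi_h\|$. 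Let $\pi_h^{\curl}:\bs H_0(\curl)\to\bs U_h$ and $\pi_h^{\div}:\bs H_0(\div)\to\bs V_h$ be uniformly bounded commuting projections; since $\curl\bs\phi_h \in \bs V_h$ is fixed by $\pi_h^{\div}$, the commuting relation yields $\curl\pi_h^{\curl}\bs\psi = \pi_h^{\div}\curl\bs\psi = \curl\bs\phi_h$, so $\pi_h^{\curl}\bs\psi - \bs\phi_h$ lies in $\ker(\curl)\cap\bs U_h = Z_h^c = \grad S_h$ by the exactness of \eqref{exact}. As $\bs\phi_h \perp Z_h^c$, this identifies $\bs\phi_h$ as the $L^2$-projection of $\pi_h^{\curl}\bs\psi$ onto $K_h^c$, whence $\|\bs\phi_h\| \le \|\pi_h^{\curl}\bs\psi\| \lesssim \|\bs\psi\|_1 \lesssim \|\curl\bs\phi_h\|$. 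This is the step I expect to be the main obstacle, since it is where $h$-uniformity is forced: it relies on a uniformly bounded commuting projection into the discrete de Rham complex and, when the classical canonical interpolant is used, on the full elliptic regularity supplied by the convexity assumption; these are precisely the structural inputs (available through \cite{Arnold2006,Arnold.D;Falk.R;Winther.R2000}) that keep the hidden constant from degenerating as $h\to 0$.

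Adding the two estimates and using $\curl\bs\phi_h = \curl\bs u_h$ gives $\|\bs u_h\|^2 = \|\grad s_h\|^2 + \|\bs\phi_h\|^2 \lesssim \|\div_h\bs u_h\|^2 + \|\curl\bs u_h\|^2 = \|\bs u_h\|_{A^c_h}^2$, which is \eqref{dP1}. Inequality \eqref{dP2} then follows from the mirror-image argument built on the second discrete Hodge decomposition $\bs V_h = \curl\bs U_h \oplus^{\bot}\grad_h W_h = Z_h^d \oplus^{\bot} K_h^d$: writing $\bs u_h = \bs\eta_h + \grad_h r_h$ with $\bs\eta_h \in Z_h^d$ and $\grad_h r_h \in K_h^d$, I would bound $\|\grad_h r_h\|$ by $\|\div\bs u_h\|$ using the definition \eqref{weakgrad} of $\grad_h$ together with the Poincar\'e inequality on $W_h \subset L_0^2$, and bound the solenoidal component $\|\bs\eta_h\|$ by $\|\curl_h\bs\eta_h\| = \|\curl_h\bs u_h\|$ via the dual discrete Poincar\'e inequality, proved exactly as above with the forward operator $\curl$ replaced by the weak operator $\curl_h$ and the two ends of the complex interchanged.
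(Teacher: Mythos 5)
Your proof is correct, and its skeleton coincides with the paper's: both split $\bs u_h$ by the discrete Hodge decomposition, bound the gradient component through the definition of $\div_h$ plus the scalar Poincar\'e inequality (your computation there is verbatim the paper's), and reduce the solenoidal component to a discrete Poincar\'e inequality for edge elements. The genuine divergence is in how that solenoidal inequality is handled. The paper treats $\|\bs v\|\lesssim \|\curl \bs v\|$ on $K_h^c$ as a black box cited from \cite{Monk.P2003,Hiptmair.R2002}, and---because it writes the solenoidal component as $\curl_h\bs\phi$ with potential $\bs\phi\in Z_h^d$---it inserts an extra duality step, proving the companion inequality \eqref{phiPoincare} for the weak curl on $Z_h^d$ via exactness ($\bs\phi = \curl\bs v$, $\bs v\in K_h^c$) before transferring everything to $\|\curl\bs u_h\|$. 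You instead keep the component directly in $K_h^c$, which lets you apply the key inequality in one line, and you then unfold that inequality rather than citing it: continuous Hodge decomposition, the embedding of Lemma \ref{lm:GR} (where convexity enters), and a bounded commuting projection identifying $\bs\phi_h$ as the $L^2$-projection of $\pi_h^{\curl}\bs\psi$ onto $K_h^c$. That argument is the standard proof of the cited inequality and is sound, with the one caveat you yourself flag: the canonical interpolant is not $L^2$-bounded on $\bs H^1$ alone, so you must either note that $\curl\bs\psi = \curl\bs\phi_h$ is piecewise polynomial (which rescues the canonical interpolant estimates of \cite{Arnold.D;Falk.R;Winther.R2000}) or invoke the smoothed $L^2$-bounded projections of \cite{Arnold2006}---the latter even removes the need for convexity in this lemma. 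Finally, you sketch \eqref{dP2} by a mirror argument, whereas the paper simply cites \cite{Chen2014a}; your sketch is plausible, but the bound $\|r_h\|\lesssim\|\grad_h r_h\|$ on $W_h\cap L_0^2$ deserves a word (it follows from the discrete inf-sup condition for the mixed Poisson problem), and for $\|\bs\eta_h\|\lesssim\|\curl_h\bs\eta_h\|$ on $Z_h^d$ the paper's own duality trick in the proof of \eqref{phiPoincare} is cleaner than redoing your projection argument with weak operators. Net effect: the paper's proof is shorter because it outsources the hard inequality; yours is more self-contained and makes explicit where $h$-uniformity, convexity, and the commuting-projection machinery are actually consumed.
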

\begin{proof}
We prove the first inequality \eqref{dP1} and refer to \cite{Chen2014a} for a proof of \eqref{dP2}. From the discrete Hodge decomposition, we have for $\bs u_h \in \bs U_h$, there exist $\rho \in S_{h}$ and $\bs \phi \in Z_h^d$ such that
\begin{align}
\bs u_h = \grad \rho + \curl_{h} \bs \phi.
\label{eq:hodge discrete}
\end{align}
Applying $-\div_{h}$ to \eqref{eq:hodge discrete}, we have $-\div_{h}\bs u_h = -\div_{h} \grad \rho$, thus
$$
\|\grad \rho\|^{2} = (-\div_{h}\bs u_h, \rho) \leq \|\div_{h} \bs u_h\| \|\rho\|\lesssim \|\div_{h}\bs u_h\| \|\grad \rho\|,
$$
which leads to
\begin{equation}\label{ieq:1}
\|\grad \rho\| \lesssim \|\div_{h}\bs u_h\|.
\end{equation}


To control the other part, we first prove a discrete Poincar\'e inequality in the form
\begin{equation}\label{phiPoincare}
\|\bs \phi\|\lesssim  \|\curl_{h}\bs \phi\| \quad \text{ for all }\phi \in Z_h^d.
\end{equation}
%
%
By the exactness of the complex \eqref{exactZK}, there exists $ \bs v \in K_h^c$ such that $\bs \phi = \curl\bs v$.
We recall another Poincar\'{e} ~inequality \cite{Monk.P2003,Hiptmair.R2002}
$$
\| \bs v\| \lesssim \|\curl \bs v\| \quad \hbox{for all} ~\bs v\in K_h^c = \bs U_h\cap \text{ker}(\curl)^{\bot}.
$$
Then we have
$$
\|\bs\phi\|^{2} = (\bs\phi, \curl \bs v) = ( \curl_{h}\bs \phi,\bs v) \leq \|\curl_{h}\bs \phi\|\| \bs v\|
\lesssim \|\curl_{h} \bs \phi\|\|\curl\bs v\| =  \|\curl_{h} \bs \phi\|\|\bs\phi\|.
$$
Canceling one $\|\bs \phi\|$, we obtain the desired inequality \eqref{phiPoincare}.

Applying $\curl$ to the Hodge decomposition \eqref{eq:hodge discrete} and using the inequality \eqref{phiPoincare}, we have $\curl\bs u_h = \curl \curl_{h} \bs\phi$, thus
$$
\|\curl_{h} \bs \phi\|^{2} = (\curl\bs u_h, \bs \phi)
\leq \|\curl\bs u_h\|\|  \bs\phi\| \lesssim \|\curl\bs u_h\|\| \curl_{h} \bs \phi\|,
$$
which leads to the inequality
\begin{equation}\label{ieq:2}
\|\curl_{h} \bs \phi\| \lesssim \|\curl \bs u_h\|.
\end{equation}

Combine inequalities \eqref{ieq:1} and \eqref{ieq:2}, we have proved that
$$
\|\bs u_h\| \leq \| \grad \rho \| + \|\curl_{h} \bs \phi\|\lesssim \|\div_{h}\bs u_h\| +  \|\curl\bs u_h\| \lesssim \|\bs u_h\|_{A^{c}_{h}}.
$$
%
\end{proof}
\begin{remark}\rm
 The result and the proof can be easily generalized to mixed discretization of Hodge Laplacian in discrete differential forms \cite{Arnold2006}. We keep the concrete form in $\bs H(\curl)$ and $\bs H(\div)$ conforming finite element spaces for the easy access of these results. \qed
\end{remark}

It is easy to prove the following inverse inequalities:
\begin{align*}
\|\bs u_h\|_{A^{c}_h}\lesssim h^{-1}\|\bs u_h\| \quad \hbox{for all} ~\bs u_h \in \bs U_h;\\
\|\bs u_h\|_{A^{d}_h} \lesssim h^{-1}\|\bs u_h\|\quad \hbox{for all} ~\bs u_h \in \bs V_h.
\end{align*}
\section{Multigrid Methods for Discrete Vector Laplacian}
In this section, we describe a variable V-cycle multigrid algorithm to solve the Schur complement equations \eqref{schur-complement} and \eqref{schur-complement-div}, and prove that it is a good preconditioner.

\subsection{Problem Setting}
Let us assume that nested tetrahedral partitions of $\Omega$ are given as
$$
\mathcal T_1 \subset \cdots \subset \mathcal T_J = \mathcal T_h,
$$
and the corresponding $H_0^1$, $\bs H_0(\curl)$ and $\bs H_0(\div)$ finite element spaces are
\begin{align*}
S_1\subset \cdots \subset S_J = S_{h}, \quad \bs U_1 \subset \cdots \subset\bs U_J = \bs U_{h}, \quad \bs V_1\subset\cdots\subset\bs V_J = \bs V_{h}.
\end{align*}
For a technical reason, we assume that the edge element space and the face element space contain the full linear polynomial which rules out only the lowest order case. When no ambiguity can arise, we replace subscripts $h$ by the level index $k$ for $k=1,2,\ldots, J$.

The discretization \eqref{mixvecLap_continuous} of the mixed formulation of the vector Laplacian in space $\bs H_{0}(\curl)$ based on $\mathcal T_k$, for $k=1,2,\ldots, J$, can be written as
\begin{equation}\label{eq:dismxivec_on_k}
\begin{pmatrix} -M_{v,k}  &  B_k  \\ B_k^T  & C_k^TM_{f,k}C_k
\end{pmatrix}
\begin{pmatrix}
\sigma_k \\ \bs u_k  \end{pmatrix}
= \begin{pmatrix}  0 \\\bs f_k
\end{pmatrix}.
\end{equation}
Eliminating $\sigma_k$ from \eqref{eq:dismxivec_on_k}, we  get the reduced Schur complement equation
\begin{equation}\label{eq:dis_eliminating_sigma_k}
A_k^{c}\bs u_k = (B_k^TM_{v,k}^{-1}B_k + C_k^T M_{f,k} C_k ) \bs u_k = \bs f_k.
\end{equation}
The discretization \eqref{mixvecLap_continuous_div} of the mixed formulation of vector Laplacian in space $\bs H_{0}(\div)$ on $\mathcal T_k$, for $k=1,2,\ldots, J$, can be written as
\begin{equation}\label{eq:dismxivec_on_k_div}
\begin{pmatrix}
-M_{e,k}  &  C_k^{T}  \\
C_k  & B_k^{T}M_{t,k}B_k
\end{pmatrix}
 \begin{pmatrix}\bs \sigma_k \\ \bs u_k  \end{pmatrix}
= \begin{pmatrix}  0 \\ \bs f_k  \end{pmatrix},
\end{equation}
and the reduced Schur complement equation is
\begin{equation}\label{eq:dis_eliminating_sigma_k_div}
A_k^{d}\bs  u_k = (B_k^TM_{t,k}B_k + C_k M_{e,k}^{-1} C_k^{T} )\bs u_k = \bs f_k.
\end{equation}
We are interested in preconditioning the Schur complement equations \eqref{eq:dis_eliminating_sigma_k} and \eqref{eq:dis_eliminating_sigma_k_div} in the finest level, i.e., $k=J$.

Notice that, for $k<J$, $A_{k}^{c}$ and $A_{k}^{d}$ are defined by the discretization of the vector Laplacian on the trianglulation $\mcal T_{k}$, but not by the Galerkin projection of $A_{J}^{c}$ or $A_{J}^{d}$ since the inverse of a mass matrix  is involved. In other words, $A_{k}^{c}$ and $A_{k}^{d}$ are non-inherited from  $A_{J}^{c}$ or $A_{J}^{d}$ for $k<J$.

When necessary, the notation without the superscript $c$ and $d$ is used to unify the discussion. The notation $\mcal V_{k}$ is used to represent both $\bs U_{k}$ and $\bs V_{k}$  spaces.

\subsection{A Variable V-cycle Multigrid Method}
We introduce some operators first. Let $R_{k}$ denote a smoothing operator on level $k$, which is assumed to be symmetric and convergent. Let $I^{k}$ denote the prolongation operator from level $k-1$ to level $k$, which is the natural inclusion since finite element spaces are nested. The transpose $Q_{k-1} = (I^k)^T$ then represents the restriction from level $k$ to level $k-1$. The Galerkin projection $P_{k-1}$, which is from level $k$ to level $k-1$, is defined as: for any given $\bs u_{k} \in \mcal V_{k}, P_{k-1}\bs u_k\in \mcal V_{k-1}$ satisfies
$$
a_{k-1}(P_{k-1} \bs u_k,  \bs v_{k-1}) =  a_k( \bs u_k, I^k \bs v_{k-1}) =  a_{k}( \bs u_k,\bs v_{k-1})\quad \text{for all}~\bs v_{k-1} \in \mcal V_{k-1}.
$$

The variable V-cycle multigrid algorithm is as following.\\
\underline{\hspace{12.6cm}}
\newline
\textbf{Algorithm 2.} Multigrid Algorithm: $\bs u_{k}^{MG} = MG_{k}(\bs f_{k}; \bs u_{k}^{0}, m_{k})$\\
\underline{\hspace{12.6cm}}
\newline
Set $MG_{1} =  A_1^{-1}$. \\
For $k\geq 2$, assume that $MG_{k-1}$ has been defined. Define $MG_{k}(\bs f_{k}; \bs u_{k}^{0}, m_{k})$ as follows:
\newline
\begin{itemize}
  \item Pre-smoothing: Define $\bs u_{k}^{l}$ for $l=1, 2, \cdots, m_{k}$ by
  $$ \bs u_k^l =  \bs u_k^{l-1} + R_k( \bs f_k - A_k \bs u_k^{l-1}).$$
  \item Coarse-grid correction: Define $\bs u_k^{m_{k}+1} = \bs u_k^{m_{k}} + I^{k}\bs e_{k-1}$, where
  $$ \bs e_{k-1} = MG_{k-1}(Q_{k-1}(\bs f_k - A_k \bs u_k^{m_{k}}); 0, m_{k-1}).$$
 \item Post-smoothing: Define $\bs u_{k}^{l}$ for $l=m_{k}+ 2, \cdots, 2m_{k}+1$ by
  $$ \bs u_k^l =  \bs u_k^{l-1} + R_k(\bs  f_k - A_k \bs u_k^{l-1}).$$
  Define $\bs u_{k}^{MG} = \bs u_{k}^{2m_{k}+1}$.
\end{itemize}
\underline{\hspace{12.6cm}}

In this algorithm, $m_{k}$ is a positive integer which may vary from level to level, and determines the number of smoothing iterations on the $k$-th level, see \cite{Bramble.J;Pasciak.J;Xu.J1991,Bramble.J;Pasciak.J1992}.

\subsection{Multigrid Analysis Framework}
We employ the multigrid analysis framework developed in~\cite{Bramble.J;Pasciak.J;Xu.J1991}. Denoted by $\lambda_{k}$ the largest eigenvalue of $A_{k}$. For the multigrid algorithm to be a good preconditioner to $A_k$, we need to verify the following assumptions:
\begin{description}
  \item[(A.1)] ``Regularity and approximation assumption'': For some $0<\alpha \leq 1$,
  $$
  \left|a_k((I-P_{k-1})\bs u_k, \bs u_k)\right| \leq C_A\left(\frac{\|A_k\bs u_k\|^2}{\lambda_k}\right)^{\alpha}a_k( \bs u_k, \bs u_k)^{1-\alpha}\qquad\text{for all }\bs u_k\in\mcal V_k,
  $$
  holds with  constant $C_{A}$ independent of $k$;
  \item[(A.2)] ``Smoothing property'':
  $$
  \frac{\|\bs u_k\|^2}{\lambda_k}\leq C_R (R_k \bs u_k, \bs u_k)\qquad \text{for all } \bs u_k\in \mcal V_k,
  $$
 holds with  constant $C_{R}$ independent of $k$.
\end{description}

Following the standard arguments, we can show that the largest eigenvalue of $A_k$, $\lambda_k$, satisfies $\lambda_k \eqsim h_k^{-2}$ for $k=1,2,\ldots, J$.


\subsection{Smoothing Property}
The symmetric Gauss-Seidel (SGS) or a properly weighted Jacobi iteration both satisfy the smoothing property (A.2), a proof of which can be found in \cite{Bramble.J;Pasciak.J1992}. For completeness we present a short proof below.

Recall that Gauss-Seidel iteration can be understood as a successive subspace correction method applied to the basis decomposition $\mcal V_k = \sum _{i=1}^{N_k}\mcal V_{k,i}$ with exact local solvers \cite{Xu1992}. For $\bs u\in \mcal V_k$, let $\bs u = \sum_{i=1}^{N_k}\bs u_i$ be the basis decomposition. By the X-Z identity \cite{Xu.J;Zikatanov.L2002,Chen.L2009c} for the multiplicative method, we have
$$
(R^{-1}_{\rm \small SGS} \bs u,\bs  u) = \|\bs u\|_{A_k}^2 + \sum_{i=0}^N\|P_i\sum _{j>i}\bs u_j\|_{A_k}^2,
$$
where $P_i$ is the $A_k$ orthogonal projection to $\mcal V_{k,i}$.
For an index $i$, we denote by $n(i)$ the set of indices such that the corresponding basis function has overlapping support with basis function at $i$. We then estimate the second term as
$$
\sum_{i=0}^N\|P_i\sum _{j>i}\bs u_j\|_{A_k}^2\leq \sum_{i=0}^N\sum _{j\in n(i)}\|\bs u_j\|_{A_k}^2\lesssim  \lambda_k\sum_{i=0}^N\|\bs u_i\|^2\lesssim \lambda_k\|\bs u\|^2.
$$
Here we use the sparsity of $A_k$ such that the repetition in the summation, i.e, the number of indices in $n(i)$, is uniformly bounded above by a constant. The last step is from the stability of the basis decomposition in $L^2$-norm which holds for all finite element spaces under consideration.

We have thus proved that $(R^{-1}_{\rm \small SGS}\bs u, \bs u) \lesssim \lambda_k\|\bs u\|^2$ which is equivalent to the smoothing property by a simple change of variable. Similar proof can be adapted to the weighted Jacobi smoother.

\subsection{Regularity Results}
In this subsection, we will present some regularity results for Maxwell equation. Recall that, we assume $\Omega$ is a bounded and convex polyhedron throughout of this paper.

\begin{lemma}[Theorem 3.7 and 3.9 in \cite{PGirault1986}]\label{lm:GR}
 The space $\bs H(\div; \Omega)\cap \bs H_0(\curl; \Omega)$ and  $\bs H_0(\div; \Omega)\cap \bs H(\curl; \Omega)$ are continuously imbedded into $\bs H^1(\Omega)$ and 
 $$
 \|\bs \phi\|_1 \lesssim \|\curl \bs \phi\| + \|\div \bs \phi\|.
 $$
 for all functions $\bs \phi \in \bs H(\div; \Omega)\cap \bs H_0(\curl; \Omega)$ or $\bs H_0(\div; \Omega)\cap \bs H(\curl; \Omega)$.
\end{lemma}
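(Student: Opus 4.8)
The statement is a cited embedding theorem, so the most direct route is simply to invoke \cite{PGirault1986}; what follows is how I would reconstruct it. The plan is to split the full $\bs H^1$ bound into two pieces: a control of the gradient seminorm $\|\nabla\bs\phi\|$ by $\|\curl\bs\phi\|+\|\div\bs\phi\|$, and a Poincar\'e--Friedrichs-type control of $\|\bs\phi\|$ by the same quantity. Adding the two then yields $\|\bs\phi\|_1\lesssim\|\curl\bs\phi\|+\|\div\bs\phi\|$.

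For the seminorm estimate I would start from the pointwise algebraic identity, valid for any smooth vector field,
\[
|\curl\bs\phi|^2+|\div\bs\phi|^2=|\nabla\bs\phi|^2+\div\bs g(\bs\phi),
\]
where $\bs g(\bs\phi)$ has components $g_j=\phi_j\,\div\bs\phi-\phi_k\,\partial_k\phi_j$, obtained by collecting the cross terms (the second-order contributions cancel after one relabeling of indices). Integrating over $\Omega$ and applying the divergence theorem converts $\int_\Omega\div\bs g$ into a boundary integral. The next step is to simplify this boundary term using the boundary condition: when $\bs\phi\cdot\bs n=0$ (the $\bs H_0(\div)$ case) or $\bs\phi\times\bs n=0$ (the $\bs H_0(\curl)$ case), the normal component of $\bs g$ reduces, after a tangential integration by parts on $\partial\Omega$, to a quadratic form in the tangential trace of $\bs\phi$ weighted by the second fundamental form of $\partial\Omega$. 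The crucial point is that on a convex domain this form is sign-definite, so the boundary integral may be discarded, giving
\[
\|\nabla\bs\phi\|^2\le\|\curl\bs\phi\|^2+\|\div\bs\phi\|^2 .
\]

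For the $L^2$ bound I would argue by compactness. The seminorm estimate shows that the graph quantity $\|\curl\bs\phi\|+\|\div\bs\phi\|$ controls $\|\bs\phi\|_1$ up to the lower-order term $\|\bs\phi\|$, so the intersection space embeds into $\bs H^1$ and hence compactly into $L^2$. Because $\Omega$ is homeomorphic to a ball, the relevant cohomology is trivial: a field with vanishing $\curl$ and $\div$ and the prescribed homogeneous boundary condition must vanish. A standard compactness--contradiction argument (a hypothetical sequence of unit-$L^2$ fields with $\curl$ and $\div$ tending to zero would have an $L^2$-limit that is a nonzero harmonic field, contradicting triviality of the cohomology) then delivers $\|\bs\phi\|\lesssim\|\curl\bs\phi\|+\|\div\bs\phi\|$.

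The main obstacle is the polyhedral, hence merely Lipschitz, boundary: the integration-by-parts identity and the second-fundamental-form computation require a $C^2$ boundary, so the argument above is literally valid only on smooth convex domains. To transfer it to a convex polyhedron I would approximate $\Omega$ from the inside by a family of smooth convex domains, establish the estimate with a constant uniform in the approximation (convexity keeps the boundary term of the correct sign throughout), and pass to the limit, checking that smooth fields stay dense in the graph norm and that no regularity is lost at the edges and corners. It is precisely here that convexity is indispensable: at a reentrant edge the sign of the curvature term reverses and the $\bs H^1$ regularity genuinely fails, which is why the convexity hypothesis on $\Omega$ cannot be removed.
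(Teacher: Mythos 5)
The paper gives no proof of this lemma at all---it is imported verbatim as Theorems 3.7 and 3.9 of Girault--Raviart \cite{PGirault1986}---so your opening move of simply invoking that reference is exactly the paper's approach, and your reconstruction is a faithful sketch of the standard argument behind the citation: the identity $|\curl\bs\phi|^2+|\div\bs\phi|^2=|\nabla\bs\phi|^2+\div\bs g(\bs\phi)$ with $g_j=\phi_j\div\bs\phi-\phi_k\partial_k\phi_j$ is correct, the sign-definite curvature boundary term on convex domains gives $\|\nabla\bs\phi\|^2\le\|\curl\bs\phi\|^2+\|\div\bs\phi\|^2$, the compactness--contradiction argument using triviality of the cohomology yields the $L^2$ bound, and the approximation of the convex polyhedron by smooth convex domains is indeed the device used to handle the merely Lipschitz boundary. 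The only slip is cosmetic: in the $\bs H_0(\curl)$ case the tangential trace vanishes, so the surviving boundary term is the mean curvature weighting the squared \emph{normal} trace $(\bs\phi\cdot\bs n)^2$, not a form in the tangential trace as in the $\bs H_0(\div)$ case---though convexity makes it sign-definite either way, so your conclusion stands.
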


In the sequel, we are going to develop an $H^2$ regularity result of Maxwell equation.
\begin{lemma}\label{lem:regularity_pro}
For functions $\bs \psi \in \bs H(\div; \Omega)\cap \bs H_0(\curl; \Omega)$ or $\bs H_0(\div; \Omega)\cap \bs H(\curl; \Omega)$. satisfying $\curl\bs\psi\in \bs H^1(\Omega)$ and $\div\bs\psi\in \bs H^1(\Omega)$. Then $\bs \psi \in H^2(\Omega)$ and
$$
\|\bs\psi\|_2 \lesssim \|\curl\bs\psi\|_1 + \|\div\bs\psi\|_1.
$$
\end{lemma}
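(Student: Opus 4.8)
The plan is to bootstrap the first-order estimate of Lemma~\ref{lm:GR} up one derivative, exploiting that the constant-coefficient operator $\partial_k$ commutes with $\curl$ and $\div$. First I would record the base case: since $\bs\psi$ already lies in $\bs H(\div)\cap\bs H_0(\curl)$ (resp. $\bs H_0(\div)\cap\bs H(\curl)$), Lemma~\ref{lm:GR} gives $\bs\psi\in\bs H^1$ with $\|\bs\psi\|_1\lesssim\|\curl\bs\psi\|+\|\div\bs\psi\|\le\|\curl\bs\psi\|_1+\|\div\bs\psi\|_1$. It then remains only to control the second derivatives of $\bs\psi$, i.e. to show that each first derivative of $\bs\psi$ lies in $\bs H^1$.

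The mechanism is to apply Lemma~\ref{lm:GR} to each partial derivative $\partial_k\bs\psi$, $k=1,2,3$. Because differentiation commutes with $\curl$ and $\div$, one has $\curl(\partial_k\bs\psi)=\partial_k(\curl\bs\psi)$ and $\div(\partial_k\bs\psi)=\partial_k(\div\bs\psi)$, both in $L^2$, with $\|\curl(\partial_k\bs\psi)\|\le\|\curl\bs\psi\|_1$ and $\|\div(\partial_k\bs\psi)\|\le\|\div\bs\psi\|_1$. Granting for the moment that $\partial_k\bs\psi$ belongs to the space to which Lemma~\ref{lm:GR} applies, that lemma yields
\[
\|\partial_k\bs\psi\|_1\lesssim \|\curl(\partial_k\bs\psi)\|+\|\div(\partial_k\bs\psi)\|\le \|\curl\bs\psi\|_1+\|\div\bs\psi\|_1.
\]
Summing over $k$ and combining with the base case gives $\bs\psi\in\bs H^2$ and the asserted bound. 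The two cases (tangential versus normal homogeneous trace) are entirely symmetric, so it suffices to treat, say, $\bs\psi\in\bs H(\div)\cap\bs H_0(\curl)$.

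The hard part is exactly the membership just assumed: although $\partial_k\bs\psi$ trivially has $L^2$ curl and divergence, it need not inherit the homogeneous tangential trace $\bs\psi\times\bs n=0$, since $\partial_k$ is not tangential along a general face and $\Omega$ is only a polyhedron, not smooth. I would resolve this in two complementary steps. For directions tangent to a face the trace $(\partial_\tau\bs\psi)\times\bs n=0$ is preserved, so Lemma~\ref{lm:GR} applies to $\partial_\tau\bs\psi$ and controls all purely tangential second derivatives, with interior regularity handled by Nirenberg difference quotients (legitimate away from $\partial\Omega$). The remaining normal second derivatives are then recovered algebraically from the vector-Laplacian identity $-\Delta\bs\psi=\curl\curl\bs\psi-\grad\div\bs\psi$: since $\curl\bs\psi,\div\bs\psi\in\bs H^1$, the right-hand side is in $L^2$ with norm $\lesssim\|\curl\bs\psi\|_1+\|\div\bs\psi\|_1$, so each component satisfies $\Delta\psi_i\in L^2$, and writing $\partial_{nn}=\Delta-\sum_\tau\partial_{\tau\tau}$ converts the already-controlled tangential Hessian and the $L^2$ Laplacian into a bound on the normal second derivatives. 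Convexity of $\Omega$ is what prevents corner and edge singularities from obstructing this last step, and is used exactly as in the convex-domain $H^2$ theory for the scalar Laplacian.

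Alternatively, and perhaps more cleanly, I would first prove the estimate for smooth $\bs\psi$ satisfying the boundary condition—where the commutation and the application of Lemma~\ref{lm:GR} to $\partial_k\bs\psi$ are fully justified—and then pass to the limit by a density argument in the graph norm $\bs\psi\mapsto\|\curl\bs\psi\|_1+\|\div\bs\psi\|_1$. Either way, the single genuine difficulty is the interaction between differentiation and the mixed tangential/normal boundary conditions on the non-smooth boundary; once that is dispatched, the statement is a direct one-derivative lift of Lemma~\ref{lm:GR}.
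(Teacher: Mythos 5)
Your approach is genuinely different from the paper's, and the difference matters. The paper never differentiates the boundary condition: it extends $\bs\psi$ by zero to $\mathbb R^3$ and estimates the Fourier transform of the second derivatives directly, using the pointwise algebraic identity $|\xi|^2|\hat{\bs\psi}(\xi)|^2=|\xi\times\hat{\bs\psi}(\xi)|^2+|\xi\cdot\hat{\bs\psi}(\xi)|^2$ at the level of first derivatives, so the face/edge bookkeeping that dominates your write-up never arises there. Your plan instead bootstraps Lemma~\ref{lm:GR} up one derivative, and you correctly isolate the obstruction: $\partial_k\bs\psi$ does not inherit the homogeneous trace condition.

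Your resolution of that obstruction, however, has a genuine gap at the edges and corners of the polyhedron. A constant direction $\tau$ tangent to one face is in general not tangent to the adjacent faces, so $\partial_\tau\bs\psi\notin\bs H_0(\curl;\Omega)$ and the global Lemma~\ref{lm:GR} cannot be applied to it; cutoff localization repairs this only near the interior of a face. Near an edge, the only direction tangential to both adjoining faces is the edge direction itself, so your scheme controls just one tangential second derivative there, and the splitting $\partial_{nn}=\Delta-\sum_{\tau}\partial_{\tau\tau}$ then yields only the trace of the transversal Hessian, not its individual entries. The appeal to ``convexity, exactly as in the convex-domain $H^2$ theory for the scalar Laplacian'' cannot close this hole: taking $\bs\psi=\grad p$ with $p\in H_0^1(\Omega)$, one has $\bs\psi\times\bs n=0$, $\curl\bs\psi=0$ and $\div\bs\psi=\Delta p$, so the lemma specializes to the third-order assertion $\Delta p\in H^1\Rightarrow p\in H^3$ --- one derivative beyond what convexity provides for the Dirichlet Laplacian, and obstructed near an edge of opening $\omega\in(\pi/2,\pi)$ by the singular function behaving like $r^{\pi/\omega}$, which fails to be in $H^3$ since $\pi/\omega<2$. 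So the edge case is not a technicality to be ``dispatched''; it is where the whole estimate is decided, and the tangential difference-quotient/Laplacian-splitting scheme cannot close there. Your density alternative fails for the same reason: density cannot manufacture a uniform a priori bound, and proving the estimate for smooth fields runs into the identical edge problem. (For fairness: the paper's two-line Fourier argument buries the same subtlety in its ``careful calculation'' --- the zero extension preserves $\curl$ thanks to the tangential trace condition, but its divergence acquires a surface term, and the second-order estimate would need the extensions of $\curl\bs\psi$ and $\div\bs\psi$ to lie in $H^1(\mathbb R^3)$, which is not given --- so the two arguments are not only different in route but also incomplete at different steps.)
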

\begin{proof}
Let $\tilde{\bs\psi}$ be the zero extension of $\bs\psi$ from $\Omega$ to $\mathbb R^3$ and $\mathcal F\tilde {\bs \psi}$ denote the Fourier transform of $\tilde \psi$ defined as usual by
$$
\mathcal F\tilde {\bs \psi} = \int_{\mathbb R^3}e^{-2i\pi(x,\mu)}\tilde {\bs\psi}\text{d} x,\qquad (x,\mu) = \sum\limits_{i =1}^3x_i\mu_i.
$$
By carefully calculation, we can prove that
$$
\left \|\mathcal F\frac{\partial^2\tilde{\psi_l}}{\partial x_i\partial x_j} \right \| \lesssim \|\curl\bs\psi\|_1 + \|\div\bs\psi\|_1.
$$
The desired result follows by the properties of Fourier transform.
\end{proof}

Then we have the following $H^2$ regularity of Maxwell equation.
\begin{lemma}\label{lem:regularity}
For any $\bs\psi\in K^c$, define $\bs\zeta\in K^c$ to be the solution of 
\begin{equation}\label{curlzeta}
(\curl\bs\zeta,\curl\bs\theta) = (\bs\psi,\bs\theta)\quad \text{for all }\bs\theta\in K^c.
\end{equation}
Then $\curl\bs\zeta \in \bs H^2(\Omega)$ and 
\begin{align}
\label{H1reg} \|\curl\bs\zeta\|_1 &\lesssim \|\bs\psi\|,\\
\label{H2reg} \|\curl\bs\zeta\|_2 &\lesssim \|\curl\bs\psi\|.
\end{align}
\end{lemma}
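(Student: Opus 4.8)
The plan is to recognize the constrained variational problem \eqref{curlzeta} as the weak form of a Maxwell-type boundary value problem, and then to read off the two bounds \eqref{H1reg} and \eqref{H2reg} directly from the regularity results in Lemma \ref{lm:GR} and Lemma \ref{lem:regularity_pro}. First I would settle well-posedness: on $K^c$ the continuous Poincar\'e inequality $\|\bs\theta\| \lesssim \|\curl\bs\theta\|$ holds (cf.\ \cite{Monk.P2003,Hiptmair.R2002}), so the bilinear form $(\curl\cdot,\curl\cdot)$ is coercive on $K^c$ and Lax--Milgram produces a unique $\bs\zeta$. Testing \eqref{curlzeta} with $\bs\theta=\bs\zeta$ and using Poincar\'e once more already gives the preliminary bound $\|\curl\bs\zeta\|\lesssim\|\bs\psi\|$.

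Next I would upgrade \eqref{curlzeta} to hold against every test field in $\bs H_0(\curl)$. For arbitrary $\bs\theta\in\bs H_0(\curl)$, use the continuous Hodge decomposition $\bs\theta = \grad q + \bs\theta^{\perp}$ with $q\in H^1_0$ and $\bs\theta^{\perp}\in K^c$; since $\curl\grad q=0$ and since $\bs\psi\in K^c$ is $L^2$-orthogonal to $\grad H^1_0$, replacing $\bs\theta$ by $\bs\theta^{\perp}$ changes neither side, so $(\curl\bs\zeta,\curl\bs\theta)=(\bs\psi,\bs\theta)$ for all $\bs\theta\in\bs H_0(\curl)$. Testing against smooth compactly supported fields then identifies $\curl\curl\bs\zeta=\bs\psi$ in $L^2$, while $\div\bs\zeta=0$ and $\bs\zeta\times\bs n=0$ come from $\bs\zeta\in K^c\subset\bs H_0(\curl)$.

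Then I would set $\bs w=\curl\bs\zeta$ and collect its properties: $\bs w\in\bs H_0(\div)$ (the range of $\curl$ in the de Rham complex, using $\bs\zeta\times\bs n=0$), $\div\bs w=\div\curl\bs\zeta=0$, and $\curl\bs w=\curl\curl\bs\zeta=\bs\psi$. Hence $\bs w\in\bs H_0(\div;\Omega)\cap\bs H(\curl;\Omega)$, and \eqref{H1reg} is immediate from Lemma \ref{lm:GR}: $\|\curl\bs\zeta\|_1=\|\bs w\|_1\lesssim\|\curl\bs w\|+\|\div\bs w\|=\|\bs\psi\|$, the divergence term dropping. For \eqref{H2reg} I would apply Lemma \ref{lem:regularity_pro} to $\bs w$: its hypotheses hold because $\curl\bs w=\bs\psi\in\bs H^1$ (by Lemma \ref{lm:GR} applied to $\bs\psi\in K^c\subset\bs H(\div)\cap\bs H_0(\curl)$) and $\div\bs w=0\in\bs H^1$, giving $\|\bs w\|_2\lesssim\|\curl\bs w\|_1+\|\div\bs w\|_1=\|\bs\psi\|_1$. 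Finally, since $\bs\psi\in K^c$ has $\div\bs\psi=0$, Lemma \ref{lm:GR} yields $\|\bs\psi\|_1\lesssim\|\curl\bs\psi\|+\|\div\bs\psi\|=\|\curl\bs\psi\|$, and chaining the two estimates produces \eqref{H2reg}.

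The Lax--Milgram step and the bookkeeping of which differential operators vanish (every $\div$ term, thanks to the membership in $K^c$ and the identity $\div\curl=0$) are routine. The step requiring the most care is the passage from the constrained identity on $K^c$ to the full weak form on $\bs H_0(\curl)$ together with the correct identification of essential and natural boundary conditions; getting this right is exactly what guarantees that $\bs w=\curl\bs\zeta$ genuinely lies in $\bs H_0(\div)\cap\bs H(\curl)$ with vanishing divergence, which is what lets Lemmas \ref{lm:GR} and \ref{lem:regularity_pro} apply and forces the divergence contributions to disappear.
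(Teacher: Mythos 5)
Your proof is correct and follows essentially the same route as the paper's: identify the strong form $\curl\curl\bs\zeta=\bs\psi$, $\div\curl\bs\zeta=0$ with $\curl\bs\zeta\in\bs H_0(\div)\cap\bs H(\curl)$, apply Lemma \ref{lm:GR} for \eqref{H1reg}, then combine $\|\bs\psi\|_1\lesssim\|\curl\bs\psi\|$ with Lemma \ref{lem:regularity_pro} for \eqref{H2reg}. The only cosmetic difference is that you obtain $\|\bs\psi\|_1\lesssim\|\curl\bs\psi\|$ by applying Lemma \ref{lm:GR} directly to $\bs\psi\in K^c$ (which has $\div\bs\psi=0$), whereas the paper derives the identical bound by testing \eqref{curlzeta} with $\bs\theta=\curl^w\bs w$ for $\bs w\in\bs H_0(\div)$ and phrasing it as $\|\curl^w\curl\bs\zeta\|_1\lesssim\|\curl\bs\psi\|$.
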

\begin{proof}
Indeed $\curl\bs\zeta \in \bs H_0(\div; \Omega)$ with $\div \curl\bs\zeta = 0$ and \eqref{curlzeta} implies $\curl^w\curl\bs\zeta = \bs \psi$ holds in $L^2$. The desired $H^1$ regularity \eqref{H1reg} of $\curl\bs\zeta$ then follows from Lemma \ref{lm:GR}.

For any $\bs w\in \bs H_0(\div)$, let $\theta = \curl^w\bs w$. Then  equation \eqref{curlzeta} implies
$$
(\curl\curl^w\curl\bs \zeta,\bs w) = (\curl\bs \psi,\bs w)\qquad\text{for all }\bs w\in \bs H_0(\div).
$$
Thus, we have
$$
\curl\curl^w\curl\bs \zeta = \curl\bs \psi \; \text{in }L^2, \text{ and }\div^w\curl^w\curl\bs \zeta = 0.
$$
Again by Lemma \ref{lm:GR}, it holds
$$
\|\curl^w\curl\bs \zeta\|_1 \lesssim \|\curl\bs\psi\|.
$$
The desired result \eqref{H2reg} is then obtained by Lemma \ref{lem:regularity_pro}.
\end{proof}

\subsection{Error Estimate of Several Projection Operators}
We define several projection operators to the null space $K_h^{\mcal D}$. Given $u\in H(\mcal D)$, find $P_h^{\mcal D} u \in K_h^{\mcal D}$ such that
\begin{equation}\label{PhD}
(\mcal D P_h^{\mcal D} u, \mcal D v_h) = (\mcal D u, \mcal D v_h), \quad \text{for all }v_h \in K_h^{\mcal D}.
\end{equation}
Equation \eqref{PhD} determines $P_h^{\mcal D} u$ uniquely since $(\mcal D \cdot , \mcal D \cdot )$ is an inner product on the subspace $K_h^{\mcal D}$ which can be proved using the Poincar\'e inequality (Lemma \ref{discrete poincare_lemma}). For $\mcal D = \grad$, we understand $K_h^{\grad}$ as $S_h$. 

\begin{lemma}[Theorem 2.4 in Monk \cite{Monk.P1992}]\label{lem:app_maxwell}
Suppose that $\curl \bs u \in \bs H^{k}$ and let $\bs u_h = P_h^c \bs u$ to $\bs U_h$ which contains polynomial of degree less than or equal to $k$. Then we have the error estimate
 $$
 \|\curl(\bs u - \bs u_h)\| \lesssim h^r\|\curl \bs u\|_{r},\qquad \text{for } 1 \leq r \leq k.
 $$
\end{lemma}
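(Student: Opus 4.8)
The plan is to exploit that $\bs u_h=P_h^c\bs u$ is, by its defining relation \eqref{PhD}, the Galerkin projection of $\bs u$ onto $K_h^c$ for the bilinear form $(\curl\cdot,\curl\cdot)$, which is a genuine inner product on $K_h^c$ thanks to the discrete Poincar\'e inequality (Lemma \ref{discrete poincare_lemma}). First I would record the Galerkin orthogonality $(\curl(\bs u-\bs u_h),\curl\bs v_h)=0$ for all $\bs v_h\in K_h^c$, and use it in the standard way (subtract an arbitrary $\bs v_h\in K_h^c$, note $\bs u_h-\bs v_h\in K_h^c$, and apply Cauchy--Schwarz) to obtain the best-approximation identity
$$
\|\curl(\bs u-\bs u_h)\|=\min_{\bs v_h\in K_h^c}\|\curl(\bs u-\bs v_h)\|.
$$
This converts the projection error into a purely approximation-theoretic quantity.

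The second step is to remove the constraint $\bs v_h\in K_h^c$ by passing to curls. Since $\curl$ annihilates $\grad S_h$, the discrete Hodge decomposition $\bs U_h=\grad S_h\oplus^\perp K_h^c$ gives $\{\curl\bs v_h:\bs v_h\in K_h^c\}=\curl\bs U_h=Z_h^d$, the divergence-free subspace of $\bs V_h$ (the range of $\curl$ equals $\ker(\div)$ by exactness of \eqref{exact}). Hence the quantity above equals the $L^2$ distance from $\curl\bs u$ to $Z_h^d$,
$$
\|\curl(\bs u-\bs u_h)\|=\min_{\bs\eta_h\in Z_h^d}\|\curl\bs u-\bs\eta_h\|,
$$
so that I now only need a good divergence-free approximant of the \emph{smooth} field $\curl\bs u$, rather than an approximant of $\bs u$ itself.

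For the third step I would supply such an approximant through the canonical face (Raviart--Thomas) interpolation $\Pi_h^f$. Because $\curl\bs u\in\bs H^k$ with $k\ge1$, the field $\Pi_h^f(\curl\bs u)$ is well defined; moreover, by the commuting diagram $\div\,\Pi_h^f=\Pi_h^W\div$ (with $\Pi_h^W$ the interpolation onto $W_h$) together with $\div\curl\bs u=0$, it satisfies $\div\,\Pi_h^f(\curl\bs u)=0$, so $\Pi_h^f(\curl\bs u)\in Z_h^d$ is admissible in the minimization. The claimed bound then follows from the classical interpolation estimate $\|\curl\bs u-\Pi_h^f(\curl\bs u)\|\lesssim h^r\|\curl\bs u\|_r$ for $1\le r\le k$.

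I expect the main obstacle to lie in the bookkeeping around the interpolation operator rather than in any single inequality: one must ensure that $\Pi_h^f$ is well defined on $\bs H^r$ fields with $r\ge1$ (so the face degrees of freedom make sense), verify the commuting property $\div\,\Pi_h^f=\Pi_h^W\div$ that makes $\Pi_h^f(\curl\bs u)$ \emph{exactly} divergence free, and match the polynomial degrees of $\bs U_h$ and $\bs V_h$ so that the order-$r$ estimate for $1\le r\le k$ is sharp. Everything else---the Galerkin orthogonality, the reduction via the Hodge decomposition, and the $L^2$ interpolation estimate---is routine once this framework is in place.
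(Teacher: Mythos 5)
Your proof is correct. There is in fact no internal proof to compare against: the paper states this result as a quotation of Theorem 2.4 in Monk \cite{Monk.P1992}, so you have supplied the argument the paper defers to, and your chain --- Galerkin orthogonality for $(\curl\cdot,\curl\cdot)$ on $K_h^c$ from \eqref{PhD}, the identification $\curl K_h^c=\curl\bs U_h=Z_h^d$ via the discrete Hodge decomposition and the exactness of \eqref{exact}, and the commuting canonical face interpolant with $\div\,\Pi_h^f=\Pi_h^W\div$ --- is essentially the standard route underlying Monk's theorem. A genuine strength of your reduction, worth stating explicitly, is that passing to the distance from $\curl\bs u$ to $Z_h^d$ avoids ever needing Sobolev regularity of $\bs u$ itself (the edge interpolant $\Pi_h^e\bs u$ would not even be defined under the hypotheses), which is precisely why the lemma can assume only $\curl\bs u\in\bs H^k$. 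One item should be added to your bookkeeping list: boundary conditions. Since $\bs V_h\subset\bs H_0(\div)$, admissibility of $\Pi_h^f(\curl\bs u)$ in $Z_h^d$ requires its normal trace to vanish on $\partial\Omega$; this holds because in the paper's applications (e.g.\ Lemma \ref{lem:maxwell_app}) one has $\bs u\in K^c\subset\bs H_0(\curl)$, so $\curl\bs u\in\bs H_0(\div)$ and all boundary face degrees of freedom of the interpolant are zero. With that observation and your correctly flagged degree matching (in a compatible complex with $\bs U_h\supseteq P_k$, the space $\bs V_h$ contains full $P_{k-1}$ locally, yielding $\|\curl\bs u-\Pi_h^f\curl\bs u\|\lesssim h^r\|\curl\bs u\|_r$ for $1\le r\le k$), the argument is complete.
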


We are also interested in the estimate of projections between two consecutive finite element spaces. Following the convention of multigrid community, for any $2< k\leq J$, let $\mathcal T_H = \mathcal T_{k-1}$ and $\mathcal T_h = \mathcal T_k$. Notice that the ratio $H/h \leq C$. 



 The following error estimates are obtained in~\cite{Arnold.D;Falk.R;Winther.R2000}.
  \begin{lemma}\label{lem:L2_Z_h}
 Given $\bs u_h\in K_h^c$, let $\bs u_H = P_H^c\bs u_h$. Then
 \begin{align*}
 \|\bs u_h - \bs u_H\| & \lesssim  H\|\curl \bs u_h\|,\\
 \|\curl (\bs u_h - \bs u_H)\| & \lesssim  H\|\curl_h\curl\bs u_h\|.
 \end{align*}
 \end{lemma}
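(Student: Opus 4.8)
The plan is to read both estimates as standard mixed finite element error bounds for the curl–curl problem restricted to the weakly divergence-free subspace, and to establish them by duality against the regularity of Lemma~\ref{lem:regularity}. First I would record the variational setup. Put $\bs f := \curl_h\curl\bs u_h$; since $\bs u_h\in K_h^c$ we have $\div_h\bs u_h=0$, so $-\Delta_h^c\bs u_h=\bs f$ and $\bs u_h$ is the Galerkin solution of $(\curl\bs u_h,\curl\bs v_h)=(\bs f,\bs v_h)$ on $K_h^c$ (note $\bs f\in K_h^c$ because $\div_h\circ\curl_h=0$ in the weak complex). Because $\bs u_H=P_H^c\bs u_h$ is the $\curl$-orthogonal projection onto $K_H^c$, we obtain the Galerkin orthogonality $(\curl(\bs u_h-\bs u_H),\curl\bs v_H)=0$ for all $\bs v_H\in K_H^c$, and in particular $\|\curl\bs u_H\|\le\|\curl\bs u_h\|$.

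For the energy estimate (second inequality) I would use the resulting best-approximation property $\|\curl(\bs u_h-\bs u_H)\|=\min_{\bs v_H\in K_H^c}\|\curl(\bs u_h-\bs v_H)\|$ and exhibit a good competitor. Let $\bs\zeta\in K^c$ solve the continuous problem \eqref{curlzeta} with data $\bs\psi=\bs f$; then \eqref{H1reg} gives $\|\curl\bs\zeta\|_1\lesssim\|\bs f\|$. Taking $\bs v_H=P_H^c\bs\zeta$ and applying Monk's estimate (Lemma~\ref{lem:app_maxwell} with $r=1$) yields $\|\curl(\bs\zeta-\bs v_H)\|\lesssim H\|\curl\bs\zeta\|_1\lesssim H\|\bs f\|$, while the analogous fine-level approximation controls $\|\curl(\bs u_h-\bs\zeta)\|$ by $h\|\bs f\|\le H\|\bs f\|$ up to a consistency term treated below. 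The triangle inequality then gives $\|\curl(\bs u_h-\bs u_H)\|\lesssim H\|\bs f\|=H\|\curl_h\curl\bs u_h\|$.

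For the $L^2$ estimate (first inequality) I would run an Aubin--Nitsche duality argument. Writing $\bs e=\bs u_h-\bs u_H$ and splitting $\bs e=\grad s+\bs e_0$ by the continuous Hodge decomposition ($s\in H_0^1$, $\bs e_0\in K^c$), note $\curl\bs e_0=\curl\bs e$. Let $\bs\zeta_e\in K^c$ solve \eqref{curlzeta} with data $\bs e_0$, so $\|\curl\bs\zeta_e\|_1\lesssim\|\bs e_0\|\le\|\bs e\|$ by \eqref{H1reg}. Then $\|\bs e_0\|^2=(\curl\bs\zeta_e,\curl\bs e)=(\curl(\bs\zeta_e-P_H^c\bs\zeta_e),\curl\bs e)$, since the coarse part drops out by Galerkin orthogonality; Monk's estimate extracts one power of $H$, giving $\|\bs e_0\|^2\lesssim H\|\bs e\|\,\|\curl\bs e\|$. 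Combined with $\|\curl\bs e\|\le\|\curl\bs u_h\|+\|\curl\bs u_H\|\le 2\|\curl\bs u_h\|$ this yields $\|\bs e_0\|\lesssim H\|\curl\bs u_h\|$, and it remains to bound the gradient part $\|\grad s\|$.

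The \emph{main obstacle} is the non-conformity of the subspaces: because the weak divergence $\div_h$ depends on the mesh, neither $K_H^c\subset K_h^c$ nor $K_h^c\subset K^c$ holds, so $\bs u_H$ is not divergence-free on the fine mesh and the continuous equation \eqref{curlzeta} cannot be tested directly against discrete fields. This produces exactly the gradient/consistency terms above, and showing they are $O(H)$ is the crux. I would control them through the fine discrete Hodge decomposition $\bs U_h=\grad S_h\oplus^{\bot}K_h^c$ together with the discrete Poincar\'e inequality (Lemma~\ref{discrete poincare_lemma}): the spurious gradient component of $\bs e$ is governed by $\div_h\bs u_H$, which is $L^2$-orthogonal to $S_H$ because $\bs u_H\in K_H^c$, so a short scalar duality (Poisson-type) argument bounds $\|\div_h\bs u_H\|$, hence $\|\grad s\|$, by $O(H)\|\curl\bs u_h\|$. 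Assembling the two parts gives $\|\bs e\|\lesssim H\|\curl\bs u_h\|$.
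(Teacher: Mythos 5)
First, a point of comparison: the paper does not prove this lemma at all --- it is quoted directly from Arnold--Falk--Winther \cite{Arnold.D;Falk.R;Winther.R2000} --- so your attempt must stand on its own. Your overall architecture is the right one and mirrors the AFW proof: Galerkin orthogonality of $P_H^c$ and best approximation in the $\curl$-seminorm, the regularity of Lemma \ref{lem:regularity} combined with Monk's estimate (Lemma \ref{lem:app_maxwell}) for the energy bound, and Aubin--Nitsche duality for the $L^2$ bound. You also correctly identify the crux, namely the non-nestedness $K_H^c\not\subset K_h^c\not\subset K^c$. The missing ingredient, however, is the one quantitative tool that resolves it: the projection estimates \eqref{Puh}--\eqref{PuH}, i.e.\ $\|(I-Q_K^c)\bs v_H\|\lesssim H\|\curl\bs v_H\|$ for $\bs v_H\in K_H^c$ (the discrete kernels are $O(H)$-close to $K^c$ in $L^2$), which you never invoke. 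With \eqref{PuH} the gradient part is immediate: writing $(I-Q_K^c)\bs u_H = \grad p$ and letting $\grad s_h$ be the discrete gradient component of $\bs e$, one has
\begin{equation*}
\|\grad s_h\|^2 = -(\bs u_H,\grad s_h) = -(\grad p,\grad s_h) \leq \|\grad p\|\,\|\grad s_h\|,
\qquad \|\grad p\|\lesssim H\|\curl\bs u_H\|.
\end{equation*}

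Your proposed substitute --- a Poisson-type duality argument showing $\|\div_h\bs u_H\|\lesssim H\|\curl\bs u_h\|$ --- is quantitatively false, and this is a genuine gap. Since $(\div_h\bs u_H, q_h) = -(\grad p,\grad q_h)$ for $q_h\in S_h$, the weak divergence is essentially a discrete Laplacian applied to the small gradient part: duality does give $\|p\|=O(H^2)$ and $\|\grad p\|=O(H)$, but $\|\div_h\bs u_H\|$ costs one more derivative \emph{on the fine mesh}, and the inverse inequality yields only $\|\div_h\bs u_H\|\lesssim h^{-1}\|\grad p\|\lesssim (H/h)\|\curl\bs u_H\|$, i.e.\ $O(1)$, not $O(H)$. (Concretely, for lowest-order N\'ed\'elec elements $\div_h\bs u_H$ is driven by the normal jumps of $\bs u_H$ across coarse faces, which do not shrink with $H$.) The smallness lives in $\grad p$ itself, not in its divergence, so your chain $\|\grad s\|\lesssim\|\div_h\bs u_H\|\lesssim H\|\curl\bs u_h\|$ breaks at the second link, while the first link discards exactly the structure that saves the argument. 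Relatedly, the fine-level consistency term in $\|\curl(\bs u_h-\bs\zeta)\|$ in your energy estimate, which you promise is ``treated below,'' is never actually treated --- your final paragraph addresses only the gradient part of the $L^2$ error --- and closing it again requires \eqref{Puh}: this is precisely how the paper's Lemma \ref{lem:maxwell_app} handles it, via the identity $(Q_K^c\bs\psi_h-\bs\psi_h,\delta_h-Q_K^c\delta_h)$. Once you import \eqref{Puh}--\eqref{PuH} at both levels (and make the trivial fix that \eqref{curlzeta} should be posed with data $Q_K^c\bs f$ rather than $\bs f\in K_h^c$), your remaining steps all go through and you essentially recover the AFW proof.
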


\begin{lemma}\label{lem:app_grad_w_h}
 Give $\bs v_h\in K_h^d$, let $\bs v_H = P_H^d\bs v_h$. Then
 \begin{align*}
 \|\bs v_h - \bs v_H\| & \lesssim  H\|\div\bs v_h\|,\\
 \|\div(\bs v_h - \bs v_H)\| & \lesssim  H\|\grad_h\div\bs v_h\|.
 \end{align*}
 \end{lemma}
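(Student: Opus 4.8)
My plan is to prove this as the exact $\div$-counterpart of Lemma~\ref{lem:L2_Z_h}, interchanging the roles of $\curl$ and $\div$, of $\bs U_h$ and $\bs V_h$, and of $S_h$ and $W_h$. Two ingredients are needed in the swapped form: (i) the $\div$-analogue of the Hodge--Laplacian regularity of Lemma~\ref{lem:regularity}, namely that for $\bs\psi\in K^d$ the solution $\bs\zeta\in K^d$ of $(\div\bs\zeta,\div\bs\theta)=(\bs\psi,\bs\theta)$ for all $\bs\theta\in K^d$ satisfies $\|\div\bs\zeta\|_1\lesssim\|\bs\psi\|$ and $\|\div\bs\zeta\|_2\lesssim\|\div\bs\psi\|$ — this follows from Lemmas~\ref{lm:GR} and \ref{lem:regularity_pro}, which are already symmetric in $\curl$ and $\div$; and (ii) the $\div$-analogue of Monk's estimate (Lemma~\ref{lem:app_maxwell}), $\|\div(\bs\zeta-P_H^d\bs\zeta)\|\lesssim H\|\div\bs\zeta\|_1$. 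A preliminary observation ties the two inequalities together: since $\div\colon K_H^d\to W_H$ is a bijection (the exact sequence \eqref{exact} gives $\div\bs V_H=W_H$, while $K_H^d\cap Z_H^d=\{0\}$), the defining relation of $P_H^d$ forces $\div\bs v_H=Q_H\div\bs v_h$, the $L^2$-projection of $\div\bs v_h$ onto $W_H$; hence $\div(\bs v_h-\bs v_H)=(I-Q_H)\div\bs v_h$ is $L^2$-orthogonal to $W_H$ and $\|\div(\bs v_h-\bs v_H)\|\le\|\div\bs v_h\|$.

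For the first inequality I would run an Aubin--Nitsche duality. Writing $\bs e=\bs v_h-\bs v_H$ and solving the dual problem above with datum $\bs\psi=\bs e$, testing with $\bs\theta=\bs e$ gives $\|\bs e\|^2=(\div\bs\zeta,\div\bs e)$. Because $\div\bs e\perp W_H=\div K_H^d$ by Galerkin orthogonality of $P_H^d$, I may subtract $P_H^d\bs\zeta\in K_H^d$ for free, so $\|\bs e\|^2=(\div(\bs\zeta-P_H^d\bs\zeta),\div\bs e)\le\|\div(\bs\zeta-P_H^d\bs\zeta)\|\,\|\div\bs e\|$. The approximation estimate (ii) and the $H^1$-regularity bound give $\|\div(\bs\zeta-P_H^d\bs\zeta)\|\lesssim H\|\div\bs\zeta\|_1\lesssim H\|\bs e\|$, while $\|\div\bs e\|\le\|\div\bs v_h\|$ from the preliminary observation; cancelling one factor of $\|\bs e\|$ yields $\|\bs e\|\lesssim H\|\div\bs v_h\|$.

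For the second inequality the preliminary observation reduces the claim to the scalar estimate $\|(I-Q_H)\phi_h\|\lesssim H\|\grad_h\phi_h\|$ with $\phi_h=\div\bs v_h\in W_h\cap L_0^2$. I would prove this discrete Poincar\'e-type inequality by duality against the Neumann Laplacian: with $s=(I-Q_H)\phi_h$, solve $-\Delta\psi=s$ with $\partial_n\psi=0$ and $\int_\Omega\psi=0$, using convexity of $\Omega$ to obtain $\|\psi\|_2\lesssim\|s\|$; then set $\bs g=\grad\psi\in\bs H_0(\div)$, with $\|\bs g\|_1\lesssim\|s\|$ and $\div\bs g=-s$, and let $\bs g_h$ be its canonical $\bs H(\div)$-interpolant, whose commuting property makes $\div\bs g_h=Q_h\div\bs g$. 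Using $s\perp W_H$ to subtract the coarse interpolant, the interpolation error $\|\bs g-\bs g_h\|\lesssim H\|\bs g\|_1$, and the definition of $\grad_h$, one converts $\|s\|^2$ into a pairing against $\grad_h\phi_h$ that carries exactly one power of $H$, giving $\|s\|\lesssim H\|\grad_h\phi_h\|$. (This is where the full $H^2$-regularity bound $\|\div\bs\zeta\|_2\lesssim\|\div\bs\psi\|$ enters, consistent with the stronger norm on the right-hand side.)

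The delicate point throughout is the mismatch between the discrete kernel $K_h^d$ in which $\bs e$ lives and the continuous kernel $K^d$ on which the regularity result is posed: $\bs e$ satisfies $\curl_h\bs e=0$ but not $\curl^w\bs e=0$, so the test $\bs\theta=\bs e$ above is not literally admissible. The standard cure, which I would adopt, is to introduce a bounded cochain (commuting) projection from the continuous de Rham complex onto the discrete one; commutation with $\div$ and $\curl$ then transfers the continuous orthogonality and regularity to the discrete error and absorbs the $Z^d$-component of $\bs e$ arising from the continuous Hodge decomposition $\bs H_0(\div)=\curl\bs H_0(\curl)\oplus K^d$. Checking that this transfer loses no power of $H$ and keeps all constants independent of the level $k$ is the main technical obstacle; once it is in place, the regularity and approximation inputs make the rest routine.
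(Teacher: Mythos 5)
First, for the record: the paper contains no proof of this lemma at all --- it is quoted verbatim from Arnold--Falk--Winther \cite{Arnold.D;Falk.R;Winther.R2000} (``The following error estimates are obtained in...''), so your outline cannot be matched against an in-paper argument and must be judged on its own terms. On those terms, roughly half of it is correct and even cleaner than you realize. Your preliminary observation that $\div\colon K_H^d\to W_H$ is a bijection, hence $\div\bs v_H=Q_H\div\bs v_h$ with $Q_H$ the scalar $L^2$-projection onto $W_H$, is right and does real work: it reduces the second inequality to the purely scalar estimate $\|(I-Q_H)\div\bs v_h\|\lesssim H\|\grad_h\div\bs v_h\|$, and your Neumann-duality proof of that estimate goes through (solve $-\Delta\psi=s$ with $\partial_n\psi=0$, set $\bs g=\grad\psi\in\bs H_0(\div)$, use the commuting interpolants on both levels together with $s\perp W_H$ to get $\|s\|^2=(\grad_h\phi_h,\bs g_h-\bs g_H)\lesssim H\|\bs g\|_1\|\grad_h\phi_h\|$). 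Two corrections there: only scalar convex-domain $H^2$ regularity is used, so your parenthetical attributing this step to the Maxwell-type bound $\|\div\bs\zeta\|_2\lesssim\|\div\bs\psi\|$ is a misattribution; and your ingredient (ii) needs no Monk-type analogue, since the same bijection argument gives $\div P_H^d\bs\zeta=Q_H\div\bs\zeta$, so $\|\div(\bs\zeta-P_H^d\bs\zeta)\|=\|(I-Q_H)\div\bs\zeta\|\lesssim H\|\div\bs\zeta\|_1$ is just scalar projection error.

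The genuine gap is in the first inequality, and you name it yourself without closing it: testing the dual problem with $\bs\theta=\bs e$ is inadmissible because $\bs e=\bs v_h-\bs v_H$ lies in neither $K^d$ nor even $K_h^d$ (the root of the whole difficulty is $K_H^d\not\subset K_h^d$). Writing the continuous Hodge decomposition $\bs e=\bs e^Z\oplus^{\bot}\bs e^K$ with $\bs e^Z\in Z^d$, your duality controls only $\bs e^K$ --- there $\|\bs e^K\|^2=(\div\bs\zeta,\div\bs e)=((I-Q_H)\div\bs\zeta,\div\bs e)\lesssim H\|\bs e^K\|\,\|\div\bs v_h\|$ works, with the needed regularity $\|\div\bs\zeta\|_1\lesssim\|\bs e^K\|$ elementary here (the dual equation extends to all of $\bs H_0(\div)$ since $\bs e^K\perp Z^d$, and then says $\grad(\div\bs\zeta)=-\bs e^K$ distributionally) --- but the divergence-free part $\bs e^Z$ carries none of $\div\bs e$ and is untouched. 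Your proposed cure via bounded commuting cochain projections, with the admission that ``checking that this transfer loses no power of $H$\dots is the main technical obstacle,'' defers exactly the estimate that \emph{is} the lemma rather than a technicality. What closes it concretely is the $\div$-analogue of the paper's estimates \eqref{Puh}--\eqref{PuH}: with $Q_K^d$ the $L^2$-projection onto $K^d$ (the counterpart of $Q_K^c$), one has $\|(I-Q_K^d)\bs w_h\|\lesssim h\|\div\bs w_h\|$ for $\bs w_h\in K_h^d$ and likewise on level $H$; applying this to $\bs v_h$ and to $\bs v_H$, and using $\|\div\bs v_H\|=\|Q_H\div\bs v_h\|\leq\|\div\bs v_h\|$ from your own preliminary observation, yields $\|\bs e^Z\|=\|(I-Q_K^d)\bs e\|\lesssim H\|\div\bs v_h\|$. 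Without that ingredient --- or an executed cochain-projection argument replacing it --- your proof of the first inequality is incomplete at its crux.
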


We now introduce a projection to $K^c$. Let 
$Q_{K}^c: \bs L^2 \to K^c$ be the $L^2$-projection to $K^c$. Notice that for $\bs u\in \bs L^2$, $Q_K^c \bs u = \bs u - \nabla p$ where $p\in H_0^1$ is determined by the Poisson equation $(\nabla p, \nabla q) = (\bs u, \nabla q)$ for all $q\in H_0^1$. Therefore $\curl Q_K^c \bs u = \curl \bs u$. Similarly we define $Q_h^c: \bs L^2 \to K_h^c$ as $Q_h^c \bs u = \bs u - \nabla p$ where $p\in S_h$ is determined by the Poisson equation $(\nabla p, \nabla q) = (\bs u, \nabla q)$ for all $q\in S_h$. We have the error estimate, c.f. \cite{Arnold.D;Falk.R;Winther.R2000,Zhou2013}. 
\begin{lemma}
For $\bs u_h \in K_h^c$, we have
\begin{equation}\label{Puh}
\| Q_K^c \bs u_h - \bs u_h \|\lesssim h\|\curl \bs u_h\|.
\end{equation} 
And for $\bs u_H \in K_H^c$
\begin{equation}\label{PuH}
\| Q_h^c \bs u_H - \bs u_H \|\lesssim H\|\curl \bs u_H\|.
\end{equation}
\end{lemma}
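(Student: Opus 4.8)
The plan is to establish \eqref{Puh} first and then deduce \eqref{PuH} from it. Throughout, write $\bs w := Q_K^c \bs u_h$, so that $\bs u_h - Q_K^c\bs u_h = \nabla p$ with $p\in H_0^1$, and recall from the definition of $Q_K^c$ that $\curl\bs w = \curl\bs u_h$ and $\div\bs w = 0$. Since $\bs u_h\in\bs U_h\subset\bs H_0(\curl)$ and $\nabla p$ has vanishing tangential trace (because $p\in H_0^1$), we have $\bs w\in\bs H_0(\curl)\cap\bs H(\div)$, so Lemma \ref{lm:GR} supplies the regularity shift
\[
\|\bs w\|_1\lesssim\|\curl\bs w\|+\|\div\bs w\|=\|\curl\bs u_h\|.
\]
The whole estimate then reduces to approximating the $\bs H^1$-function $\bs w$ by the discrete function $\bs u_h$ in $L^2$, with the factor $h$ coming from first-order interpolation.

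First I would introduce a bounded commuting (quasi-)interpolation, i.e.\ cochain projections $\Pi_h^c$ onto $\bs U_h$ and $\Pi_h^d$ onto $\bs V_h$ satisfying $\curl\Pi_h^c=\Pi_h^d\curl$, reproducing the discrete spaces, with $\|\bs w-\Pi_h^c\bs w\|\lesssim h\|\bs w\|_1$ (standard in the framework of \cite{Arnold2006,Arnold.D;Falk.R;Winther.R2000}). Set $\bs d_h:=\bs u_h-\Pi_h^c\bs w\in\bs U_h$. The key observation is that $\bs d_h$ is discretely curl-free: since $\curl\bs w=\curl\bs u_h\in\bs V_h$, the commuting and reproducing properties give $\curl\Pi_h^c\bs w=\Pi_h^d\curl\bs u_h=\curl\bs u_h$, hence $\curl\bs d_h=0$ and $\bs d_h\in Z_h^c=\grad S_h$. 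Now both $\bs u_h$ (because $\bs u_h\in K_h^c$, so $(\bs u_h,\nabla s_h)=-(\div_h\bs u_h,s_h)=0$) and $\bs w$ (because $\bs w\in K^c$ is $L^2$-orthogonal to all gradients) are $L^2$-orthogonal to $\bs d_h$. Therefore
\[
\|\bs d_h\|^2=(\bs u_h-\Pi_h^c\bs w,\bs d_h)=-(\Pi_h^c\bs w-\bs w,\bs d_h)\le\|\Pi_h^c\bs w-\bs w\|\,\|\bs d_h\|,
\]
so $\|\bs d_h\|\lesssim h\|\bs w\|_1$. The triangle inequality $\|\bs u_h-\bs w\|\le\|\bs d_h\|+\|\Pi_h^c\bs w-\bs w\|$ together with the regularity bound then closes \eqref{Puh}.

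For \eqref{PuH} I would first note that, restricted to $\bs U_h$, the operator $Q_h^c$ is exactly the $L^2$-orthogonal projection onto $K_h^c$: by the discrete Hodge decomposition $\bs U_h=\grad S_h\oplus^\bot K_h^c$, removing the gradient part $\nabla p$ is precisely that projection. Since $\bs u_H\in\bs U_H\subset\bs U_h$, this makes $\|\bs u_H-Q_h^c\bs u_H\|$ the $L^2$-distance from $\bs u_H$ to $K_h^c$, so it suffices to exhibit one good competitor in $K_h^c$. With $\bs w:=Q_K^c\bs u_H$ (again $\|\bs w\|_1\lesssim\|\curl\bs u_H\|$), I take $\bs v_h:=Q_h^c(\Pi_h^c\bs w)\in K_h^c$ and split
\[
\|\bs u_H-\bs v_h\|\le\|\bs u_H-\bs w\|+\|\bs w-\Pi_h^c\bs w\|+\|\Pi_h^c\bs w-Q_h^c\Pi_h^c\bs w\|.
\]
The first term is bounded by $H\|\curl\bs u_H\|$ by applying the already-proved \eqref{Puh} at the coarse level; the second is $\lesssim h\|\bs w\|_1\le H\|\curl\bs u_H\|$ since $h\le H$; and the third, being again the $\grad S_h$-component of $\Pi_h^c\bs w$, is controlled by $\|\Pi_h^c\bs w-\bs w\|\lesssim h\|\bs w\|_1$ exactly as in the previous paragraph (using $\bs w\perp\grad S_h$). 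This yields \eqref{PuH}.

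The main obstacle is the regularity/interpolation interface rather than any single computation: one must be sure that $Q_K^c\bs u_h$ genuinely lands in $\bs H_0(\curl)\cap\bs H(\div)$ so that Lemma \ref{lm:GR} applies, and one must have a commuting interpolation that is simultaneously first-order accurate on $\bs H^1$ and a projection reproducing $\bs V_h$ --- this is what converts ``same curl'' into ``discretely curl-free,'' which is the crux of both estimates. The remaining orthogonality manipulations and the reduction of \eqref{PuH} to \eqref{Puh} are then routine.
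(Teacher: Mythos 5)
Your proof is correct. The paper itself does not prove this lemma---it only points to \cite{Arnold.D;Falk.R;Winther.R2000,Zhou2013}---and your argument is essentially the standard one behind those citations: identify $\bs w = Q_K^c\bs u_h$ as a genuinely divergence-free field in $\bs H_0(\curl)\cap\bs H(\div)$ with the same curl, invoke the convexity-based embedding (Lemma \ref{lm:GR}) for $\|\bs w\|_1\lesssim\|\curl\bs u_h\|$, and convert ``same curl'' into ``discretely curl-free'' via a commuting projection so that the error $\bs d_h$ lands in $\grad S_h$, where the two orthogonalities ($\div_h\bs u_h=0$ and $\bs w\perp\grad H_0^1$) finish the job. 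The one ingredient you assume without proof---an $L^2$-bounded commuting cochain projection onto $\bs U_h,\bs V_h$ respecting the homogeneous boundary conditions and first-order accurate on $\bs H^1$---is exactly the smoothed-projection machinery of \cite{Arnold2006}, so this is a legitimate citation rather than a gap; alternatively, the canonical edge interpolant is admissible here since $\curl\bs w=\curl\bs u_h$ is a piecewise polynomial, which is how \cite{Arnold.D;Falk.R;Winther.R2000} proceeds. One simplification worth noting for \eqref{PuH}: since, as you observe, $Q_h^c$ restricted to $\bs U_h$ removes the $L^2$-projection onto $\grad S_h$, and $\bs w=Q_K^c\bs u_H\perp\grad S_h$, you get directly
\begin{equation*}
\|\bs u_H - Q_h^c\bs u_H\| = \|P_{\grad S_h}\bs u_H\| = \|P_{\grad S_h}(\bs u_H-\bs w)\| \leq \|\bs u_H - \bs w\| \lesssim H\|\curl\bs u_H\|,
\end{equation*}
by \eqref{Puh} applied at level $H$, which bypasses your three-term split and the competitor $Q_h^c(\Pi_h^c\bs w)$ entirely.
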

In the estimate \eqref{Puh}-\eqref{PuH}, we lift a function in a coarse space to a fine space while in Lemma \ref{lem:L2_Z_h}, we estimate the projection. The $L^2$-projection $Q_h^c: K_H^c \to K_h^c$ can be thought of as a prolongation of non-nested spaces $K_H^c$ and $K_h^c$.

\subsection{Approximation Property of Edge Element Spaces}
Let $\bs u_h\in \bs U_{h}$ be the solution of equation
\begin{equation}\label{eq:solutionof_h}
a_h^{c}(\bs u_h,\bs v_h) = (\bs f_h, \bs v_h)\qquad\text{for all }\bs v_h\in\bs U_{h},
\end{equation}
and $\bs u_H\in \bs U_{H}\subset \bs U_{h}$ be the solution of equation
\begin{equation}\label{eq:solutionof_H}
a_H^{c}(\bs u_H,\bs v_H) = (\bs f_h,\bs v_H)\qquad\text{for all }\bs v_H\in\bs U_{H}.
\end{equation}
We have the Hodge decomposition
\begin{align}
\label{eq:u_h_decomp}
 \bs u_h & = \grad \phi_h \oplus^{\bot} \bs u_{0,h}, \quad \text{ with unique }\phi_h\in S_{h},\  \bs u_{0,h}\in K_{h}^c,\\
 \label{eq:u_H_decomp}
 \bs u_H & = \grad \phi_H \oplus^{\bot} (\bs u_{0,H} + \bs e_H), \quad \text{ with unique }\phi_H\in S_{H},\  \bs u_{0,H}\text{~and~}\bs e_H\in K_H^c,\\
 \label{eq:f_h_decomp}
 \bs f_h & =  \grad g_h \oplus^{\bot} \curl_h \bs q_h , \quad \text{ with unique } g_h\in S_{h}, \ \bs q_h\in Z_h^d,
\end{align}
where $\bs u_{0,H} = P_H^c \bs u_{0,h}$ . 
Then by Lemma \ref{lem:L2_Z_h}, we immediately get the following estimate.
\begin{lemma}\label{lem:u_0_H_L2}
Let $\bs u_{0,h} $ and $\bs u_{0,H}$ be defined as in equations \eqref{eq:u_h_decomp} and \eqref{eq:u_H_decomp}. It holds
$$ \|\bs u_{0,h} - \bs u_{0,H}\| \lesssim H\|\bs u_h\|_{A_h^c}.  $$
\end{lemma}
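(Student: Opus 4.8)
The plan is to apply Lemma~\ref{lem:L2_Z_h} directly to the component $\bs u_{0,h}\in K_h^c$ arising in the Hodge decomposition~\eqref{eq:u_h_decomp}. Since $\bs u_{0,H}=P_H^c\bs u_{0,h}$ by definition, the first estimate of Lemma~\ref{lem:L2_Z_h}, applied with the generic element there taken to be $\bs u_{0,h}$, immediately yields
$$
\|\bs u_{0,h}-\bs u_{0,H}\|\lesssim H\|\curl\bs u_{0,h}\|.
$$
It therefore remains only to bound $\|\curl\bs u_{0,h}\|$ by $\|\bs u_h\|_{A_h^c}$, after which the two estimates chain together.

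The key observation I would use is that the $L^2$-orthogonal Hodge decomposition $\bs u_h=\grad\phi_h\oplus^{\bot}\bs u_{0,h}$ has a curl-free gradient part, so applying $\curl$ annihilates $\grad\phi_h$ and gives $\curl\bs u_h=\curl\bs u_{0,h}$. Consequently
$$
\|\curl\bs u_{0,h}\|=\|\curl\bs u_h\|\leq\|\bs u_h\|_{A_h^c},
$$
where the final inequality is read off from Definition~\ref{Ah_definition}, since $\|\bs u_h\|_{A_h^c}^2=\|\curl\bs u_h\|^2+\|\div_{h}\bs u_h\|^2\geq\|\curl\bs u_h\|^2$. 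Combining the two displays completes the argument.

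There is essentially no serious obstacle here, which is exactly why the statement can be advertised as ``immediate''; the only point requiring genuine care is the bookkeeping of which factor Lemma~\ref{lem:L2_Z_h} actually controls. In particular, one must resist the temptation to estimate the full difference $\bs u_h-\bs u_H$ directly or to reach for the second (stronger) bound of Lemma~\ref{lem:L2_Z_h}: the weaker first bound, together with the curl-invariance of the Hodge decomposition, is precisely what is needed and already delivers the optimal $O(H)$ rate in terms of the $A_h^c$-norm of the full solution $\bs u_h$.
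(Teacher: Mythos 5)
Your proof is correct and follows exactly the route the paper intends: the paper states the lemma as an immediate consequence of Lemma~\ref{lem:L2_Z_h}, and your chain $\|\bs u_{0,h}-\bs u_{0,H}\|\lesssim H\|\curl\bs u_{0,h}\|=H\|\curl\bs u_h\|\leq H\|\bs u_h\|_{A_h^c}$ simply makes explicit the bookkeeping the paper leaves implicit. No gaps.
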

Now we turn to the estimate of $\bs e_H$ being given in equation \eqref{eq:u_H_decomp}.
\begin{lemma}\label{lem:e_H_app}
  Let $\bs e_H\in K_H^c$ be defined as in equation \eqref{eq:u_H_decomp}. It holds
  $$  \|\bs e_H\|_{A_h^c} \lesssim H \|A_h^c \bs u_h\|.$$
\end{lemma}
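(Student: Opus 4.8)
The plan is to work entirely within the rotational component $K_H^c$: I would derive a single variational equation for $\bs e_H$ and then bound its $\curl$ part and its weak divergence $\div_h\bs e_H$ separately. The device that bridges the two non-nested null spaces $K_H^c$ and $K_h^c$ is the projection $Q_h^c$, of which I will use exactly two properties established earlier: $\curl Q_h^c = \curl$ (since $Q_h^c\bs u=\bs u-\grad p$), and the approximation bound \eqref{PuH}, $\|Q_h^c\bs u_H-\bs u_H\|\lesssim H\|\curl\bs u_H\|$ for $\bs u_H\in K_H^c$.

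First I would extract the equation satisfied by $\bs e_H$. Testing the coarse problem \eqref{eq:solutionof_H} against $\bs v_H\in K_H^c$ annihilates the $\div_H$ term, and since $\curl\grad\phi_H=0$ it gives $(\curl(\bs u_{0,H}+\bs e_H),\curl\bs v_H)=(\bs f_h,\bs v_H)$. Subtracting the defining relation of $\bs u_{0,H}=P_H^c\bs u_{0,h}$, namely $(\curl\bs u_{0,H},\curl\bs v_H)=(\curl\bs u_{0,h},\curl\bs v_H)$, leaves
$$(\curl\bs e_H,\curl\bs v_H)=(\bs f_h,\bs v_H)-(\curl\bs u_{0,h},\curl\bs v_H),\qquad \bs v_H\in K_H^c.$$
To bound $\|\curl\bs e_H\|$ I would take $\bs v_H=\bs e_H$ and rewrite the right-hand side through the fine problem. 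The key observation is that $Q_h^c\bs e_H\in K_h^c$ is an admissible test function for \eqref{eq:solutionof_h} restricted to $K_h^c$ (there the $\div_h$ term drops and $\curl\bs u_h=\curl\bs u_{0,h}$), so, using $\curl Q_h^c=\curl$, one gets $(\curl\bs u_{0,h},\curl\bs e_H)=(\curl\bs u_{0,h},\curl Q_h^c\bs e_H)=(\bs f_h,Q_h^c\bs e_H)$. Hence $\|\curl\bs e_H\|^2=(\bs f_h,\bs e_H-Q_h^c\bs e_H)$, and since $\bs e_H-Q_h^c\bs e_H\in\bs U_h$ I may replace $\bs f_h$ by $A_h^c\bs u_h$; Cauchy--Schwarz and \eqref{PuH} then yield $\|\curl\bs e_H\|^2\lesssim\|A_h^c\bs u_h\|\,H\,\|\curl\bs e_H\|$, i.e. $\|\curl\bs e_H\|\lesssim H\|A_h^c\bs u_h\|$.

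The step I expect to be the crux is controlling $\|\div_h\bs e_H\|$, which is nonzero precisely because $\bs e_H\in K_H^c$ is only \emph{coarse}-weakly solenoidal. Here I would use that $\div_h Q_h^c\bs e_H=0$ (as $Q_h^c\bs e_H\in K_h^c$), so that $\div_h\bs e_H=\div_h(\bs e_H-Q_h^c\bs e_H)$; the inverse inequality, \eqref{PuH}, and the $\curl$ bound just proved give
$$\|\div_h\bs e_H\|\lesssim h^{-1}\|\bs e_H-Q_h^c\bs e_H\|\lesssim h^{-1}H\|\curl\bs e_H\|\lesssim h^{-1}H^2\|A_h^c\bs u_h\|.$$
The dangerous $h^{-1}$ produced by the inverse inequality is absorbed by the quasi-uniform, consecutive-level assumption $H/h\le C$, which forces $h^{-1}H^2\lesssim H$. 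This balancing is the whole subtlety: the inverse inequality loses one power of $h$ while \eqref{PuH} recovers only one power of $H$, and it is exactly $H/h\le C$ that makes them cancel. Squaring and adding the two estimates finally gives $\|\bs e_H\|_{A_h^c}^2=\|\curl\bs e_H\|^2+\|\div_h\bs e_H\|^2\lesssim H^2\|A_h^c\bs u_h\|^2$, which is the claimed bound.
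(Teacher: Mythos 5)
Your proof is correct and takes essentially the same route as the paper's: the same error equation for $\bs e_H$ on $K_H^c$, the same device $\bs e_h = Q_h^c\bs e_H$ with $\div_h\bs e_h = 0$ and the bound \eqref{PuH} to control $\|\curl\bs e_H\|$, and the identical inverse-inequality argument balanced by $H/h\le C$ for $\|\div_h\bs e_H\|$. The only cosmetic difference is that you keep $\bs f_h$ intact and cancel through the fine equation tested with $Q_h^c\bs e_H$, while the paper first Hodge-decomposes $\bs f_h = \grad g_h \oplus^{\bot} \curl_h\bs q_h$ and records the slightly sharper intermediate bound $\|\curl\bs e_H\|\lesssim H\|\grad g_h\|$; the two identities coincide since $(\curl_h\bs q_h,\, \bs e_H - Q_h^c\bs e_H) = 0$.
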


\begin{proof}
By equations \eqref{eq:solutionof_h} and \eqref{eq:solutionof_H}, we have
\begin{align*}
(\curl \bs u_{0,h}, \curl \bs v_{h}) & =  (\bs q_h,\curl\bs v_{h}), \quad \text{for all }\bs v_{h}\in K_h^c\\
(\curl(\bs u_{0,H} + \bs e_H),\curl\bs v_H) & = (\grad g_h, \bs v_H) + (\bs q_h, \curl\bs v_H) , \quad \text{for all }\bs v_{H}\in K_H^c,
\end{align*}
where $g_h$ and $\bs q_h$ are defined in equation \eqref{eq:f_h_decomp}. Then 
\begin{equation}
  \label{eq:e_H_eq}
  (\curl\bs e_H,\curl\bs v_H) = (\grad g_h,\bs v_H)\qquad \text{for all }\bs v_H \in K_H^c.
\end{equation}
Let $\bs e_h = Q_h^c\bs e_H$, then $\div_h\bs e_h = 0$ and by Lemma \ref{lem:L2_Z_h} $\|\bs e_h - \bs e_H\| \lesssim H\|\curl\bs e_H\|.$
Thus it holds
\begin{eqnarray*}
  (\curl\bs e_H,\curl\bs e_H) & = & (\grad g_h,\bs e_H) = (\grad g_h,\bs e_H - \bs e_h) \lesssim H \|\grad g_h\|\|\curl\bs e_H\|,
\end{eqnarray*}
which implies
$$
\|\curl\bs e_H\| \lesssim H\|\grad g_h\| \leq H\|\bs f_h\| = H\|A_h^c\bs u_h\|.
$$

Using the fact that $\div_h\bs e_h = 0$, the inverse inequality and the above inequality, we immediately get
$$
\|\div_h\bs e_H\| = \|\div_h(\bs e_H - \bs e_h)\| \lesssim h^{-1} \|\bs e_h - \bs e_H\| \lesssim \frac{H}{h} \|\curl\bs e_H\| \lesssim H \|A_h^c\bs u_h\|.
$$
The desired result then follows.
\end{proof}

We now explore the relation between $\phi_h, \phi_H$, and $g_h$ defined in equations \eqref{eq:u_h_decomp}-\eqref{eq:f_h_decomp}.
\begin{lemma}
  \label{lem:phi_app}
  Let $\phi_h\in S_h$ and $\phi_H\in S_H$ be defined as in equations \eqref{eq:u_h_decomp} and \eqref{eq:u_H_decomp}. It holds
  $$
  \|\grad \phi_h - \grad\phi_H\| \lesssim H \|\bs u_h\|_{A_h^c}.
  $$
\end{lemma}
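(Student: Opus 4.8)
The plan is to reduce this vector statement to a pair of scalar Poisson problems and then run a two-level finite element analysis. First I would test the fine mixed system \eqref{mixvecLap-discrete} with gradient fields $\bs v_h=\grad\tau_h$, $\tau_h\in S_h$. Since $\curl\grad\tau_h=0$ and, by the discrete Hodge decomposition, $\grad S_h=Z_h^c$ is $L^2$-orthogonal to $K_h^c=\curl_h\bs V_h$, the component $\curl_h\bs q_h$ of $\bs f_h$ in \eqref{eq:f_h_decomp} drops out and one gets $(\grad\sigma_h,\grad\tau_h)=(\grad g_h,\grad\tau_h)$, hence $\sigma_h=g_h$. Substituting into the first equation and using that $\bs u_{0,h}\in K_h^c$ is $L^2$-orthogonal to $\grad S_h$ gives $(\grad\phi_h,\grad\tau_h)=(g_h,\tau_h)$ for all $\tau_h\in S_h$, i.e.\ $\phi_h$ is the Galerkin approximation in $S_h$ of the Poisson problem $-\Delta\phi=g_h$. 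Repeating the computation on the coarse level with $\bs v_H=\grad\tau_H$ and using $\grad S_H\subset\grad S_h\perp K_h^c$ yields $(\grad\phi_H,\grad\tau_H)=(\sigma_H,\tau_H)$ with $\sigma_H=\Pi_H g_h$, the $H^1$-\emph{Ritz} projection of $g_h$ onto $S_H$ (not the $L^2$-projection, because the second equation couples through $\grad$).

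Second, I would exploit the convexity of $\Omega$. Let $\phi\in H_0^1\cap H^2$ solve $-\Delta\phi=g_h$ with $\|\phi\|_2\lesssim\|g_h\|$. Then $\phi_h=\Pi_h\phi$ is the fine Ritz projection, and I would introduce the ideal coarse solution $\hat\phi_H=\Pi_H\phi$, characterised by $(\grad\hat\phi_H,\grad\tau_H)=(g_h,\tau_H)$. The standard two-level estimate gives $\|\grad(\phi_h-\hat\phi_H)\|\le\|\grad(\phi-\Pi_h\phi)\|+\|\grad(\phi-\Pi_H\phi)\|\lesssim H\|\phi\|_2\lesssim H\|g_h\|$, and since $\|g_h\|=\|\div_h\grad\phi_h\|=\|\div_h\bs u_h\|\le\|\bs u_h\|_{A_h^c}$, this contribution already obeys the claimed bound. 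It therefore remains to control the data-mismatch term $\|\grad(\hat\phi_H-\phi_H)\|$.

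The mismatch term is where I expect the real difficulty. Subtracting the two coarse problems, $w_H:=\hat\phi_H-\phi_H\in S_H$ satisfies $(\grad w_H,\grad\tau_H)=(g_h-\Pi_H g_h,\tau_H)$ for all $\tau_H\in S_H$, so an Aubin--Nitsche duality argument (again using $H^2$-regularity together with the $H^1$-orthogonality of the Ritz error $g_h-\Pi_H g_h$ to $S_H$) gives $\|\grad w_H\|\lesssim H\|\grad(g_h-\Pi_H g_h)\|$. The obstacle is that $g_h\in S_h$ is a \emph{generic} finite element function, so its coarse $H^1$-projection error cannot be made small by smoothness; the only honest control is $\|\grad(g_h-\Pi_H g_h)\|\le\|\grad g_h\|\lesssim h^{-1}\|g_h\|$ through the inverse inequality, and here the quasi-uniformity hypothesis $H\eqsim h$ (equivalently $H/h\le C$) is indispensable to keep the factors balanced. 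Assembling the pieces one arrives at $\|\grad\phi_h-\grad\phi_H\|\lesssim H\|g_h\|+H\|\grad(g_h-\Pi_H g_h)\|$, and the crux to scrutinise is precisely whether the full power of $H$ survives in the second summand: it is exactly the non-inheritance of the bilinear forms across levels — the coarse data being the $H^1$- rather than the $L^2$-projection of $g_h$ — that makes this the delicate step, and the inverse inequality plus $H\eqsim h$ is the mechanism I would rely on to absorb it into $H\|\bs u_h\|_{A_h^c}$.
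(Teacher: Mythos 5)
Your reduction is exactly the paper's route: testing with gradient fields on each level gives $(\grad\phi_h,\grad\tau_h)=(g_h,\tau_h)$ on the fine grid and $(\grad\phi_H,\grad\tau_H)=(P_H^g g_h,\tau_H)$ on the coarse grid, with the coarse data being the Ritz projection $P_H^g g_h$ (your $\Pi_H g_h$) rather than the $L^2$ projection, and the two Galerkin-error pieces are handled the same way. The genuine gap is in the data-mismatch term, and as you set it up it is fatal. Your duality step for $w_H=\hat\phi_H-\phi_H$ correctly yields $\|\grad w_H\|\lesssim \|(I-\Pi_H)g_h\|\lesssim H\|\grad(g_h-\Pi_H g_h)\|$, but the mechanism you invoke to finish --- inverse inequality plus $H\eqsim h$ --- gives
$$
H\|\grad(g_h-\Pi_H g_h)\|\leq H\|\grad g_h\|\lesssim H\,h^{-1}\|g_h\|\eqsim \|g_h\|,
$$
an $O(1)$ bound, not the required $O(H)\|g_h\|$: the bounded ratio of consecutive mesh sizes keeps the product finite but cannot manufacture the missing power of $H$. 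Assembling your pieces therefore yields only $\|\grad(\phi_h-\phi_H)\|\lesssim\|\bs u_h\|_{A_h^c}$, weaker than the lemma by a full factor $H$.

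The paper closes this hole by measuring the Ritz error one duality level deeper, in the $H^{-1}$ norm rather than in $L^2$. Introducing $\tilde\phi$ with $-\Delta\tilde\phi=P_H^g g_h$, the mismatch at the continuous level is $\|\grad(\phi-\tilde\phi)\|=\|g_h-P_H^g g_h\|_{-1}$, and the negative-norm estimate $\|g_h-P_H^g g_h\|_{-1}\lesssim H^2\|g_h\|_1$ supplies \emph{two} powers of $H$; only then do the inverse inequality $\|g_h\|_1\lesssim h^{-1}\|g_h\|$ and $H/h\leq C$ get spent, leaving the one power of $H$ the lemma needs. This extra power is not free: the $H^{-1}$ estimate requires $S_H$ of polynomial degree at least $2$, which is where the paper's standing assumption that the spaces contain the full linear polynomial (excluding the lowest-order case) enters. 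Note also that your $O(1)$ computation is not wasted --- the paper uses precisely it, but only where $O(1)$ suffices, namely to show $P_H^g$ is $L^2$-stable on $S_h$-functions, $\|(I-P_H^g)g_h\|\lesssim (H/h)\|g_h\|\lesssim\|g_h\|$, so that the coarse Galerkin error obeys $\|\grad(\tilde\phi-\phi_H)\|\lesssim H\|P_H^g g_h\|\lesssim H\|g_h\|$. Your argument would be repaired by replacing the $L^2$-level duality for $w_H$ with the $H^{-1}$-norm bound (and adding the degree hypothesis, without which the extra power of $H$ is genuinely unavailable).
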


\begin{proof}
For equation \eqref{eq:solutionof_h}, test with $\bs v_{h} \in \grad S_{h}$ to get
\begin{align*}
(\div_{h} \grad \phi_{h}, \div_{h}\bs v_{h}) = (\grad g_{h},\bs v_{h}) = -(g_{h}, \div_{h}\bs v_{h}),
\end{align*}
which implies $-\div_{h} \grad \phi_{h}=-\div_{h} \bs u_{h} = g_{h}$, i.e.,
\begin{eqnarray}\label{eq:solution_of_h_}
-\Delta _{h} \phi_{h} = g_{h}.
\end{eqnarray}
From equation \eqref{eq:solution_of_h_}, we can see that $\phi_h$ is the Galerkin projection of $\phi$ to $S_h$, where $\phi\in H_0^1(\Omega)$ satisfies the Poisson equation:
$$
 -\Delta \phi = g_h.
$$
Therefore by the standard error estimate of finite element methods, we have
$$
\|\nabla \phi - \nabla \phi_h\|\lesssim H\|g_h\|.
$$


For equation \eqref{eq:solutionof_H}, choose $\bs v_{H} = \grad \psi_H \in \grad S_{H}$, we have
\begin{align*}
(\div_{H} \grad \phi_{H}, \div_{H}\bs  v_{H}) = (\grad g_{h}, \grad \psi_H) = (\grad P_{H}^{g}g_{h}, \grad \psi_H) ,
\end{align*}
which implies $-\div_{H} \grad \phi_{H} = P_{H}^{g}g_{h}$, i.e.,
\begin{eqnarray}\label{eq:solution_of_H_}
-\Delta _{H} \phi_{H} = P_{H}^{g}g_{h}.
\end{eqnarray}
From equation \eqref{eq:solution_of_H_}, we can see that $\phi_H$ is the Galerkin projection of $\tilde \phi$ to $S_H$, where $\tilde \phi\in H_0^1(\Omega)$ satisfies the Poisson equation:
$$
 -\Delta  \tilde \phi = P_{H}^{g}g_h.
$$
The $H^1$-projection $P_H^g$ is not stable in $L^2$-norm. Applied to functions in $S_h$, however, we can recover one as follows
$$
\|(I- P_H^g)g_h\| \lesssim H\|\grad (I- P_H^g)g_h\|\lesssim H\|\grad g_h\|\lesssim H/h \|g_h\| \lesssim \|g_h\|.
$$
In the last step, we use the fact that the ratio of the mesh size between consecutive levels is bounded, i.e., $H/h\leq C$.

We then have
$$
\|\grad (\tilde \phi - \phi_H)\|\lesssim H\|P_{H}^{g}g_h\| \leq H\|g_h\| + H\|(I- P_H^g)g_h\|\lesssim H\|g_h\|.
$$
And by the triangle inequality and the stability of the projection operator $P_H^{g}$
\begin{align*}
\|\grad(\phi_h - \phi_{H})\| &\leq  \|\grad(\phi_h - \phi)\|+\|\grad(\phi_H - \tilde\phi)\| + \|\grad(\phi-\tilde \phi)\| \\
&\lesssim H\|g_h\|+\|g_{h}-P_{H}^{g}g_{h}\|_{-1}.
\end{align*}
Using the error estimate of negative norms and the inverse inequality, we have
\begin{align*}
\|g_{h}-P_{H}^{g}g_{h}\|_{-1}&\lesssim  H^{2}\|g_{h}\|_{1} \lesssim H\|g_{h}\|.
\end{align*}
Here we use $H^{-1}$ norm estimate for $S_{H}$ having degree greater than or equal to $2$. Noticing that $g_h = \div_h \bs u_h$, we thus get
\begin{eqnarray}\label{eq:H_1_appro_phi}
\|\grad(\phi_{h}-\phi_{H})\|\lesssim H\|\div_{h}\bs u_{h}\| \lesssim H\|\bs u_h\|_{A_h^c}.
\end{eqnarray}
\end{proof}

As a summary of the above results, we have the following approximation result.
\begin{theorem}
  \label{them:App_curl}
  Condition (A.1) holds with $\alpha = \frac{1}{2}$, i.e. for any $\bs u_k\in\bs U_k$, there hold
\begin{equation}\label{eq:A_1_curl} 
a_k^c((I-P_{k-1}) \bs u_k,\bs  u_k) \lesssim  \left(\frac{\|A_k^c\bs u_k\|^2}{\lambda_k}\right)^{\frac{1}{2}} a_k^c( \bs u_k,\bs u_k)^{\frac{1}{2}}.
\end{equation}
\end{theorem}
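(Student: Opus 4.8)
The plan is to reduce the energy-norm quantity on the left to a plain $L^2$ pairing, to which the approximation Lemmas \ref{lem:u_0_H_L2}, \ref{lem:e_H_app} and \ref{lem:phi_app} apply directly. Set $\bs f_h \deq A_k^c\bs u_k$, write $H=h_{k-1}$, $h=h_k$, and abbreviate $\bs u_k=:\bs u_h$, so that $\bs u_h$ solves \eqref{eq:solutionof_h} for this $\bs f_h$. First I would identify the Galerkin projection with the coarse solution: testing the definition of $P_{k-1}$ against $\bs v_H\in\bs U_H\subset\bs U_h$ and using \eqref{eq:solutionof_h} shows that $\bs u_H\deq P_{k-1}\bs u_k$ is exactly the coarse solution of \eqref{eq:solutionof_H} with the same right-hand side. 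Since $\bs u_H\in\bs U_H\subset\bs U_h$ is a legitimate test function in \eqref{eq:solutionof_h}, symmetry of $a_h^c$ gives the clean identity
$$
a_h^c((I-P_{k-1})\bs u_k,\bs u_k)=a_h^c(\bs u_h,\bs u_h)-a_h^c(\bs u_h,\bs u_H)=(\bs f_h,\bs u_h-\bs u_H).
$$
This is the crucial step: it sidesteps the fact that the coarse form $a_H^c$ is non-inherited, so that $P_{k-1}$ is \emph{not} the $a_h^c$-orthogonal projection and ordinary Galerkin orthogonality is unavailable.

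Next I would insert the Hodge decompositions \eqref{eq:u_h_decomp}--\eqref{eq:f_h_decomp}, giving $\bs u_h-\bs u_H=\grad(\phi_h-\phi_H)+(\bs u_{0,h}-\bs u_{0,H})-\bs e_H$ and $\bs f_h=\grad g_h\oplus^{\bot}\curl_h\bs q_h$. Matching the $\grad S_h$ and $K_h^c$ components of $\bs f_h=A_h^c\bs u_h=\curl_h\curl\bs u_h-\grad\div_h\bs u_h$ identifies $\bs q_h=\curl\bs u_h$ (hence $\|\bs q_h\|\le\|\bs u_h\|_{A_h^c}$) and $\|\grad g_h\|\le\|\bs f_h\|=\|A_h^c\bs u_h\|$. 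Expanding $(\bs f_h,\bs u_h-\bs u_H)$ produces three groups of terms, one per piece of $\bs u_h-\bs u_H$; in each I would pair the $\grad g_h$ part through $L^2$ bounds and rewrite the $\curl_h\bs q_h$ part via the adjoint definition \eqref{weakcurl} of $\curl_h$, i.e. $(\curl_h\bs q_h,\cdot)=(\bs q_h,\curl\,\cdot)$.

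The estimates then follow from the lemmas. For the gradient piece, $L^2$-orthogonality annihilates $(\curl_h\bs q_h,\grad(\phi_h-\phi_H))$, and Lemma \ref{lem:phi_app} bounds $(\grad g_h,\grad(\phi_h-\phi_H))\lesssim H\|A_h^c\bs u_h\|\,\|\bs u_h\|_{A_h^c}$. For the null-space piece, Lemma \ref{lem:u_0_H_L2} controls $(\grad g_h,\bs u_{0,h}-\bs u_{0,H})$, while the second estimate of Lemma \ref{lem:L2_Z_h} applied to $\bs u_{0,h}\in K_h^c$ (using $\curl_h\curl\bs u_{0,h}=\curl_h\bs q_h$) gives $\|\curl(\bs u_{0,h}-\bs u_{0,H})\|\lesssim H\|A_h^c\bs u_h\|$, which controls $(\bs q_h,\curl(\bs u_{0,h}-\bs u_{0,H}))$. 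For the $\bs e_H$ piece, the adjoint identity turns $(\curl_h\bs q_h,\bs e_H)$ into $(\bs q_h,\curl\bs e_H)$, bounded by $\|\curl\bs e_H\|\lesssim H\|A_h^c\bs u_h\|$ from the proof of Lemma \ref{lem:e_H_app}, and \eqref{eq:e_H_eq} tested with $\bs v_H=\bs e_H$ yields $(\grad g_h,\bs e_H)=\|\curl\bs e_H\|^2\lesssim H^2\|A_h^c\bs u_h\|^2$. Collecting all terms and using $H\eqsim h\eqsim\lambda_k^{-1/2}$ would give the claim.

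The step I expect to require the most care is the $\bs e_H$ contribution, precisely the term $(\grad g_h,\bs e_H)=\|\curl\bs e_H\|^2\lesssim H^2\|A_h^c\bs u_h\|^2$: a crude Cauchy--Schwarz here leaves one full power of $H$ short of the target. The remedy is the spectral inverse inequality $\|A_h^c\bs u_h\|\le\lambda_k^{1/2}\|\bs u_h\|_{A_h^c}\eqsim h^{-1}\|\bs u_h\|_{A_h^c}$, which converts $H^2\|A_h^c\bs u_h\|^2$ into $H\|A_h^c\bs u_h\|\,\|\bs u_h\|_{A_h^c}$, exactly the required form. Conceptually the whole argument hinges on the opening reduction to the $L^2$ pairing $(\bs f_h,\bs u_h-\bs u_H)$, since it is what makes the $L^2$-type approximation estimates of Lemmas \ref{lem:u_0_H_L2}--\ref{lem:phi_app}, rather than energy-norm estimates, sufficient despite the non-inherited coarse bilinear form.
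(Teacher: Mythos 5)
Your proposal is correct and follows essentially the same route as the paper: identify $P_{k-1}\bs u_k$ with the coarse solution $\bs u_H$ of \eqref{eq:solutionof_H}, split $\bs u_h-\bs u_H$ via the Hodge decompositions \eqref{eq:u_h_decomp}--\eqref{eq:f_h_decomp}, and combine Lemmas \ref{lem:u_0_H_L2}, \ref{lem:e_H_app} and \ref{lem:phi_app} with $\lambda_k\eqsim h_k^{-2}$. The paper's proof is a condensed version of yours: it pairs $\delta_1=\bs u_{0,h}-\bs u_{0,H}$ and $\delta_2=\grad\phi_h-\grad\phi_H$ directly against $A_h^c\bs u_h$ by $L^2$ Cauchy--Schwarz and treats $\bs e_H$ by energy Cauchy--Schwarz with $\|\bs e_H\|_{A_h^c}\lesssim H\|A_h^c\bs u_h\|$, so your finer splitting $\bs f_h=\grad g_h\oplus^{\bot}\curl_h\bs q_h$ and the spectral inequality used for the $(\grad g_h,\bs e_H)$ term, while sound, are not needed.
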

\begin{proof}
  We use $h$ to denote $k$ and $H$ to denote $k-1$. Let 
  $\bs u_h$, $\bs u_H$, and $\bs f_h$ as in equations \eqref{eq:solutionof_h}-\eqref{eq:solutionof_H} which have Hodge decompositions, c.f. \eqref{eq:u_h_decomp}-\eqref{eq:f_h_decomp}. Let $\delta_1 =\bs u_{0,h} - \bs u_{0,H}$, $\delta_2 = \grad \phi_h - \grad\phi_H$, by Lemmas \ref{lem:u_0_H_L2}, \ref{lem:e_H_app} and \ref{lem:phi_app}, it holds
  \begin{align*}
    a_h^c((I - P_H)\bs u_h, \bs u_h) & = a_h^c(\delta_1,\bs u_h) + a_h^c(\delta_2,\bs u_h) + a_h^c(\bs e_H,\bs u_h) \\
    & \leq \|\delta_1\| \|A_h^c \bs u_h\| + \|\delta_2\| \|A_h^c \bs u_h\| + \|\bs e_H\|_{A_h^c}\|\bs u_h\|_{A_h^c} \\
    & \lesssim H\|\bs u_h\|_{A_h^c} \|A_h^c\bs u_h\|.
  \end{align*}
\end{proof}

\subsection{Approximation Property of Face Element Spaces}
Let $\bs u_h\in \bs V_{h}$ be the solution of equation
\begin{equation}\label{eq:solutionof_h_div}
a_h^{d}(\bs u_h,\bs v_h) = (\bs f_h,\bs v_h)\qquad\text{for all } \bs v_h\in\bs V_{h},
\end{equation}
and $\bs u_H\in \bs V_{H}\subset \bs V_{h}$ be the solution of equation
\begin{equation}\label{eq:solutionof_H_div}
a_H^{d}(\bs u_H,\bs v_H) = (\bs f_h,\bs v_H)\qquad\text{for all } \bs v_H\in\bs V_{H}.
\end{equation}
We can easily see that $\bs f_h = A_h^d\bs u_h$. 

By the Hodge decomposition, we have
\begin{align}
  \label{eq:u_h_decomp_div}
  \bs u_h & = \curl \bs\phi_h \oplus \bs u_{0,h}, \quad  \text{ with unique }\bs\phi_h\in K_h^c,\ \bs u_{0,h}\in K_h^d,\\
  \label{eq:u_H_decomp_div}
\bs u_H & = \curl \bs\phi_H \oplus (\bs u_{0,H} + \bs e_H) \quad  \text{ with unique } \bs\phi_H\in K_H^c,\  \bs u_{0,H},\bs e_H\in K_H^d, \\
\label{eq:f_h_decomp_div}
\bs f_h & = \curl \bs g_h \oplus \grad_h  q_h  \quad \text{~with unique~} \bs g_h\in K_h^c, \ q_h\in W_{h},
\end{align}
where $\bs u_{0,H} = P_H^d \bs u_{0,h}$. 
By Lemma \ref{lem:app_grad_w_h}, we immediately have the following result.
\begin{lemma}
  \label{lem:u_0_H_L2_div}
  Let $\bs u_{0,h}\in\grad_h W_h$ and $\bs u_{0,H}\in \grad_H W_H$ be defined as in equations \eqref{eq:u_h_decomp_div} and \eqref{eq:u_H_decomp_div}. It holds
  $$ \|\bs u_{0,h} - \bs u_{0,H}\| \lesssim H \|\div\bs u_{0,h}\|.  $$
\end{lemma}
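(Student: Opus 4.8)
The plan is to recognize the statement as a direct specialization of Lemma \ref{lem:app_grad_w_h}, which is precisely why the text announces it as ``immediate.'' The Hodge decomposition \eqref{eq:u_h_decomp_div} guarantees that the component $\bs u_{0,h}$ lies in $K_h^d = \bs V_h \cap \ker(\curl_h)$, and the definition recorded just after \eqref{eq:f_h_decomp_div} fixes $\bs u_{0,H} = P_H^d \bs u_{0,h}$. These are exactly the hypotheses of Lemma \ref{lem:app_grad_w_h} under the identification $\bs v_h = \bs u_{0,h}$ and $\bs v_H = P_H^d \bs u_{0,h} = \bs u_{0,H}$.

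First I would check that $P_H^d$ is legitimately applied to $\bs u_{0,h}$: since $K_h^d \subset \bs V_h \subset \bs H_0(\div)$, the function $\bs u_{0,h}$ lies in the domain $H(\div)$ of the projection defined in \eqref{PhD}, and uniqueness of $P_H^d \bs u_{0,h}$ follows because $(\div\cdot,\div\cdot)$ is an inner product on $K_H^d$ by the discrete Poincar\'e inequality \eqref{dP2}. With this identification in place, the first estimate of Lemma \ref{lem:app_grad_w_h} reads $\|\bs u_{0,h} - \bs u_{0,H}\| \lesssim H\|\div\bs u_{0,h}\|$, which is exactly the claimed bound. This mirrors the curl-side Lemma \ref{lem:u_0_H_L2}, obtained in the same one-line fashion from Lemma \ref{lem:L2_Z_h}.

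Because Lemma \ref{lem:app_grad_w_h} is available as a black box, there is no genuine obstacle at this level; the substitution $\bs v_h \mapsto \bs u_{0,h}$ into an already-established estimate is the entire proof. The real content sits inside Lemma \ref{lem:app_grad_w_h} itself (cited from \cite{Arnold.D;Falk.R;Winther.R2000}): were one to reprove it, the mechanism would be an Aubin--Nitsche duality argument, writing $\bs u_{0,h} - \bs u_{0,H}$ as the error of the mixed/Galerkin approximation of the scalar potential problem associated with $\grad_h$ and controlling it through the $L^2$ error estimates of mixed finite element methods together with the $H^2$ regularity supplied by Lemma \ref{lm:GR} and Lemma \ref{lem:regularity}, the single factor of $H$ being the one power of mesh size gained by duality on a convex domain. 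For the present lemma, however, none of this needs to be redone.
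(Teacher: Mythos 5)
Your proof is correct and matches the paper's argument exactly: the paper likewise obtains this lemma as an immediate consequence of Lemma \ref{lem:app_grad_w_h}, using $\bs u_{0,h}\in K_h^d$ and $\bs u_{0,H}=P_H^d\bs u_{0,h}$. Your additional remarks on the duality mechanism behind Lemma \ref{lem:app_grad_w_h} are accurate but not needed here.
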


The estimate of $\bs e_H\in K_H^d$ defined in equation \eqref{eq:u_H_decomp_div} can be proved analog to Lemma \ref{lem:e_H_app} and thus skipped. 
\begin{lemma}\label{lem:e_H_div}
Assume that $\bs e_H \in \grad_H W_H$ be defined as in equation \eqref{eq:u_H_decomp_div}. Then it holds
$$  \| \bs e_H\|_{A_h^d} \lesssim \|A_h^d \bs u_h\|. $$
\end{lemma}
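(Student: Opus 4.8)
The plan is to mirror the proof of Lemma~\ref{lem:e_H_app}, interchanging the roles of $\curl$ and $\div$. The first task is to characterize $\bs e_H$ variationally. Testing the coarse equation~\eqref{eq:solutionof_H_div} with $\bs v_H\in K_H^d$ annihilates the $\curl_H$ term (since $\curl_H\bs v_H=0$) and, because $\div\curl\bs\phi_H=0$, reduces it to $(\div(\bs u_{0,H}+\bs e_H),\div\bs v_H)=(\bs f_h,\bs v_H)$. Since $\bs u_{0,H}=P_H^d\bs u_{0,h}$ gives $(\div\bs u_{0,H},\div\bs v_H)=(\div\bs u_{0,h},\div\bs v_H)$ on $K_H^d$, I would subtract the fine equation~\eqref{eq:solutionof_h_div} tested against the same $\bs v_H$ (now viewed in $\bs V_h$), namely $(\curl_h\bs u_h,\curl_h\bs v_H)+(\div\bs u_{0,h},\div\bs v_H)=(\bs f_h,\bs v_H)$, to obtain $(\div\bs e_H,\div\bs v_H)=(\curl_h\bs u_h,\curl_h\bs v_H)$ for all $\bs v_H\in K_H^d$. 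Using the adjoint relation~\eqref{weakcurl} and identifying $\curl\bs g_h=\curl\curl_h\bs u_h$ as the $\curl\bs U_h$-component of $\bs f_h$ in~\eqref{eq:f_h_decomp_div}, the right-hand side becomes $(\curl\curl_h\bs u_h,\bs v_H)=(\curl\bs g_h,\bs v_H)$, so that
\begin{equation*}
(\div\bs e_H,\div\bs v_H)=(\curl\bs g_h,\bs v_H)\qquad\text{for all }\bs v_H\in K_H^d,
\end{equation*}
the exact dual of~\eqref{eq:e_H_eq}.

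With this in hand, I would let $\bs e_h\in K_h^d$ be the $L^2$-projection of $\bs e_H$ onto $K_h^d=\grad_h W_h$, so that $\curl_h\bs e_h=0$ and, by the face-element analogue of the lifting estimate~\eqref{PuH}, $\|\bs e_h-\bs e_H\|\lesssim H\|\div\bs e_H\|$. Taking $\bs v_H=\bs e_H$ and using that $\curl\bs g_h\in Z_h^d$ is $L^2$-orthogonal to $\bs e_h\in K_h^d$,
\begin{equation*}
\|\div\bs e_H\|^2=(\curl\bs g_h,\bs e_H)=(\curl\bs g_h,\bs e_H-\bs e_h)\le\|\curl\bs g_h\|\,\|\bs e_H-\bs e_h\|\lesssim H\|\curl\bs g_h\|\,\|\div\bs e_H\|,
\end{equation*}
whence $\|\div\bs e_H\|\lesssim H\|\curl\bs g_h\|\le H\|\bs f_h\|=H\|A_h^d\bs u_h\|$. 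For the remaining part I would use $\curl_h\bs e_h=0$, the inverse inequality, and $H/h\le C$ to get $\|\curl_h\bs e_H\|=\|\curl_h(\bs e_H-\bs e_h)\|\lesssim h^{-1}\|\bs e_H-\bs e_h\|\lesssim (H/h)\|\div\bs e_H\|\lesssim H\|A_h^d\bs u_h\|$. Summing the two squared bounds then yields $\|\bs e_H\|_{A_h^d}\lesssim H\|A_h^d\bs u_h\|$, which is in fact sharper than the stated estimate.

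The main obstacle is the first step: pinning down the correct source term. The crux is that $K_H^d\not\subset K_h^d$, so a coarse kernel function $\bs v_H$ has a nonzero fine weak curl $\curl_h\bs v_H$, and it is precisely this mismatch that drives $\bs e_H$; one must use the adjoint definition~\eqref{weakcurl} of $\curl_h$ together with the Hodge decomposition~\eqref{eq:f_h_decomp_div} to recognize the right-hand side as $(\curl\bs g_h,\bs v_H)$. Everything after that is a routine transcription of the edge-element argument, provided one first records the face-element lifting estimate $\|\bs e_h-\bs e_H\|\lesssim H\|\div\bs e_H\|$ for the $L^2$-projection onto $K_h^d$ (the $\div$-analogue of~\eqref{PuH}), which follows by the same reasoning as in~\cite{Arnold.D;Falk.R;Winther.R2000}.
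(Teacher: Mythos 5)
Your proof is correct and is precisely the curl/div-swapped transcription of Lemma \ref{lem:e_H_app} that the paper itself intends: the paper skips this proof, stating only that it ``can be proved analog to Lemma \ref{lem:e_H_app}'', and your error equation $(\div\bs e_H,\div\bs v_H)=(\curl\bs g_h,\bs v_H)$, the $L^2$-projection onto $K_h^d$ with the face-element analogue of \eqref{PuH}, and the inverse-inequality step for $\curl_h\bs e_H$ are exactly the dual of that argument. Note moreover that your sharper bound $\|\bs e_H\|_{A_h^d}\lesssim H\|A_h^d\bs u_h\|$ is the estimate actually required for Theorem \ref{them:A_1} (mirroring Lemma \ref{lem:e_H_app}), so the missing factor $H$ in the lemma as stated appears to be a typo in the paper.
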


We now explore the relation between $\bs \phi_h,\ \bs\phi_H$, and $\bs g_h$ defined in equations \eqref{eq:u_h_decomp_div}-\eqref{eq:f_h_decomp_div}.

\begin{lemma}
  \label{lem:maxwell_app}
  Assume that $\bs \psi_h\in K_h^c$. Let $\bs\zeta_h\in K_h^c$ be the solution of equation
  $$
  (\curl\bs\zeta_h,\curl\bs\tau_h) = (\bs \psi_h,\bs\tau_h)\qquad\text{for all }\bs\tau_h \in K_h^c,
  $$ 
  and let $\bs\zeta\in  K^c$ be the solution of equation 
  $$
  (\curl\bs\zeta,\curl\bs\tau)    =     (Q_{ K}^c\bs \psi_h ,\bs\tau) \qquad  \text{for all }\bs\tau\in  K^c .
  $$
  Then, it holds
  $$
  \|\curl(\bs\zeta - \bs\zeta_h)\| \lesssim h \|\bs \psi_h\|.
  $$
\end{lemma}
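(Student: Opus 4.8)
The plan is to insert the curl-energy projection $P_h^c\bs\zeta \in K_h^c$ of the exact solution as an intermediate function and split
$$
\|\curl(\bs\zeta - \bs\zeta_h)\| \leq \|\curl(\bs\zeta - P_h^c\bs\zeta)\| + \|\curl(P_h^c\bs\zeta - \bs\zeta_h)\|.
$$
For the first term I would invoke the regularity of the continuous problem. Since the right-hand side $Q_K^c\bs\psi_h$ lies in $K^c$, Lemma \ref{lem:regularity} (specifically \eqref{H1reg}) gives $\|\curl\bs\zeta\|_1 \lesssim \|Q_K^c\bs\psi_h\| \leq \|\bs\psi_h\|$, the last step because $Q_K^c$ is an $L^2$-projection and hence a contraction. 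Monk's approximation result (Lemma \ref{lem:app_maxwell}) with $r=1$ then yields $\|\curl(\bs\zeta - P_h^c\bs\zeta)\| \lesssim h\|\curl\bs\zeta\|_1 \lesssim h\|\bs\psi_h\|$, which is already of the claimed order.

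The real work is in the second, purely discrete term. Writing $\bs w_h = P_h^c\bs\zeta - \bs\zeta_h \in K_h^c$, I would compute $\|\curl\bs w_h\|^2$ directly. By the defining property \eqref{PhD} of $P_h^c$ we have $(\curl P_h^c\bs\zeta, \curl\bs w_h) = (\curl\bs\zeta, \curl\bs w_h)$, while the discrete equation gives $(\curl\bs\zeta_h, \curl\bs w_h) = (\bs\psi_h, \bs w_h)$. To evaluate $(\curl\bs\zeta, \curl\bs w_h)$ I would test the continuous equation with $Q_K^c\bs w_h \in K^c$; the crucial points are that $\curl Q_K^c\bs w_h = \curl\bs w_h$ (since $Q_K^c$ only subtracts a gradient) and that $Q_K^c$ is a self-adjoint idempotent, so the continuous equation collapses to $(\curl\bs\zeta, \curl\bs w_h) = (Q_K^c\bs\psi_h, \bs w_h)$. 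Combining the three identities produces the consistency relation
$$
\|\curl\bs w_h\|^2 = (Q_K^c\bs\psi_h - \bs\psi_h, \bs w_h),
$$
which isolates the error as the pairing of the Helmholtz residual $\bs\psi_h - Q_K^c\bs\psi_h = \nabla p$ against $\bs w_h$.

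To estimate this residual term I would exploit the orthogonality built into the Hodge decomposition. Because $Q_K^c\bs w_h \in K^c$ is $L^2$-orthogonal to $\nabla H_0^1$ while $Q_K^c\bs\psi_h - \bs\psi_h \in \nabla H_0^1$, I may replace $\bs w_h$ by $\bs w_h - Q_K^c\bs w_h$ at no cost, obtaining $\|\curl\bs w_h\|^2 = (Q_K^c\bs\psi_h - \bs\psi_h, \bs w_h - Q_K^c\bs w_h)$. Applying Cauchy--Schwarz together with estimate \eqref{Puh} to each factor (both $\bs\psi_h$ and $\bs w_h$ lie in $K_h^c$) gives $\|\curl\bs w_h\|^2 \lesssim h^2\|\curl\bs\psi_h\|\,\|\curl\bs w_h\|$, hence $\|\curl\bs w_h\| \lesssim h^2\|\curl\bs\psi_h\|$. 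Finally, since $\div_h\bs\psi_h = 0$, one has $\|\curl\bs\psi_h\| = \|\bs\psi_h\|_{A_h^c}$, and the inverse inequality gives $\|\bs\psi_h\|_{A_h^c} \lesssim h^{-1}\|\bs\psi_h\|$, so $\|\curl\bs w_h\| \lesssim h\|\bs\psi_h\|$; the triangle inequality then closes the estimate.

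The main obstacle, and the reason the argument is delicate, is that $K_h^c$ is \emph{not} a subspace of $K^c$: the discrete constraint $\div_h\bs w_h=0$ and the continuous constraint $\div^w\bs\tau=0$ differ. This rules out a direct Galerkin/C\'ea argument and forces $Q_K^c$ to serve as a bridge, relying on the two facts that it preserves the curl and annihilates gradient residuals. A secondary subtlety is dimensional: the natural bound on the discrete defect comes out in the stronger norm $\|\curl\bs\psi_h\|$ with a spare factor of $h$, and only the inverse inequality---legitimate here because $\bs\psi_h$ is already a discrete, weakly divergence-free function---trades that factor back to recover the asserted $h\|\bs\psi_h\|$ estimate.
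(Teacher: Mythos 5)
Your proposal is correct and follows essentially the same route as the paper: it splits the error via the curl-energy projection $P_h^c\bs\zeta$, handles the projection error with Lemma \ref{lem:regularity} and Monk's estimate (Lemma \ref{lem:app_maxwell}), derives the identical error equation for the discrete part by testing the continuous problem with $Q_K^c\bs w_h$, and uses the same orthogonality trick to replace $\bs w_h$ by $\bs w_h - Q_K^c\bs w_h$. The only, harmless, deviation is the final bound: the paper estimates $\|(I-Q_K^c)\bs\psi_h\|\leq\|\bs\psi_h\|$ by the $L^2$-contraction property and applies \eqref{Puh} once, whereas you apply \eqref{Puh} to both factors and then trade the extra power of $h$ back with the inverse inequality, arriving at the same estimate.
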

\begin{proof}
Let $\tilde{\bs \zeta} _h = P_h^c \bs \zeta$. 
By Lemma \ref{lem:app_maxwell}, we have
$$
\|\curl(\bs\zeta - \tilde{\bs \zeta}_h)\| \lesssim h\|\curl\bs\zeta\|_1 \lesssim h\|Q_{ K}^c\bs\psi_h\| \lesssim h\|\bs\psi_h\|.
$$
But $\bs\zeta_h\neq \tilde{\bs \zeta}_h$. Indeed by the definition of $\bs\zeta_h$ and $\tilde{\bs \zeta}_h$, we have
$$
(\curl(\bs\zeta_h - \tilde{\bs\zeta}_h),\curl\bs\tau_h) = (\bs\psi_h - Q_{ K}^c\bs\psi_h,\bs\tau_h)\qquad\text{for all }\bs\tau_h \in  K_h.
$$
Thus, with $\delta_h = \bs\zeta_h - \tilde{\bs\zeta}_h$, we have
\begin{align*}
\|\curl(\bs \zeta_h - \tilde{\bs \zeta}_h)\|^2 & =  (\curl(\bs \zeta_h - \tilde{\bs \zeta}_h), \curl \delta_h) =  (Q_{ K}^c\bs \psi_h - \bs\psi_h,\delta_h)\\
& = (Q_{ K}^c\bs \psi_h - \bs\psi_h, \delta_h - Q_{ K}^c\delta_h) \lesssim h\|\bs\psi_h\|\|\curl \delta_h\|.
\end{align*}

%
 The desired result follows by canceling one $\|\curl \delta_h\|$ and the triangle inequality.
\end{proof}

We are in the position to estimate $\bs \phi_h$ and $\bs \phi_H$. 
\begin{lemma}
  \label{lem:phi_app_div}
  Let $\bs \phi_h\in \bs U_h$ and $\bs \phi_H\in \bs U_H$ be defined as in equations \eqref{eq:u_h_decomp_div} and \eqref{eq:u_H_decomp_div}. It holds
  $$
  \|\curl \bs\phi_h - \curl\bs\phi_H\| \lesssim H \|\bs u_h\|_{A_h^d}.
  $$
\end{lemma}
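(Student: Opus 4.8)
The plan is to mirror the proof of Lemma \ref{lem:phi_app}, replacing the scalar Poisson problem there by a Maxwell (curl-curl) problem and the elementary finite element estimates by Lemma \ref{lem:maxwell_app}. First I would identify the variational problems solved by $\bs\phi_h$ and $\bs\phi_H$. Testing \eqref{eq:solutionof_h_div} with $\bs v_h=\curl\bs\tau_h$ for $\bs\tau_h\in K_h^c$ annihilates the $\div$ term, and the weak-curl identity \eqref{weakcurl} together with the Hodge decomposition \eqref{eq:u_h_decomp_div} (the $K_h^d$ part is $L^2$-orthogonal to $\curl\bs\tau_h\in Z_h^d$) shows that $\bs\phi_h\in K_h^c$ solves
\[
(\curl\bs\phi_h,\curl\bs\tau_h)=(\bs g_h,\bs\tau_h)\qquad\text{for all }\bs\tau_h\in K_h^c,
\]
with $\bs g_h=\curl_h\bs u_h$ the factor in \eqref{eq:f_h_decomp_div}. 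The same computation on the coarse level gives $(\curl\bs\phi_H,\curl\bs\tau_H)=(\bs\psi_H,\bs\tau_H)$ on $K_H^c$ with $\bs\psi_H:=\curl_H\bs u_H\in K_H^c$; moreover, testing once more with $\bs v_H=\curl\bs\tau_H$ and using the weak-curl identity yields $(\curl\bs\psi_H,\curl\bs\tau_H)=(\curl\bs g_h,\curl\bs\tau_H)$, i.e. $\bs\psi_H=P_H^c\bs g_h$ is the curl-energy projection of $\bs g_h$ onto $K_H^c$. This is the exact analogue of the coarse source $P_H^g g_h$ in Lemma \ref{lem:phi_app}.

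Next I would bridge the two levels by two continuous Maxwell solutions: $\bs\zeta\in K^c$ with source $Q_K^c\bs g_h$ and $\tilde{\bs\zeta}\in K^c$ with source $Q_K^c\bs\psi_H$. Lemma \ref{lem:maxwell_app}, applied on levels $h$ and $H$, gives $\|\curl(\bs\phi_h-\bs\zeta)\|\lesssim h\|\bs g_h\|$ and $\|\curl(\bs\phi_H-\tilde{\bs\zeta})\|\lesssim H\|\bs\psi_H\|$. To use the second estimate I would first show $\|\bs\psi_H\|\lesssim\|\bs g_h\|$: by Lemma \ref{lem:L2_Z_h}, $\|(I-P_H^c)\bs g_h\|\lesssim H\|\curl\bs g_h\|\lesssim (H/h)\|\bs g_h\|\lesssim\|\bs g_h\|$ via the inverse inequality and $H/h\le C$, so $\|\bs\psi_H\|\le\|\bs g_h\|+\|(I-P_H^c)\bs g_h\|\lesssim\|\bs g_h\|$. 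By the triangle inequality the whole estimate then reduces to bounding the continuous difference $\|\curl(\bs\zeta-\tilde{\bs\zeta})\|$, after which the result follows from $\|\bs g_h\|=\|\curl_h\bs u_h\|\le\|\bs u_h\|_{A_h^d}$.

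The main obstacle is this continuous difference $\bs d:=\bs\zeta-\tilde{\bs\zeta}\in K^c$, which solves the Maxwell problem with source $Q_K^c(I-P_H^c)\bs g_h$. A naive Cauchy–Schwarz bound gives only $\|\curl\bs d\|\lesssim\|(I-P_H^c)\bs g_h\|\lesssim H\|\curl\bs g_h\|$, which the inverse inequality degrades to $O(1)\|\bs g_h\|$ — too weak; this is precisely where the scalar proof invokes a negative-norm (Aubin–Nitsche) estimate, and I would carry out the analogue for the Maxwell operator. Writing $\|\curl\bs d\|^2=((I-P_H^c)\bs g_h,\bs d)$ (using $\bs d\in K^c$) and introducing the auxiliary $\bs y\in K^c$ solving $(\curl\bs y,\curl\bs\theta)=(\bs d,\bs\theta)$, the identity $(\bs d,\bs v)=(\curl\bs y,\curl\bs v)$ for $\bs v\in K_h^c$ (valid since $\bs d$ is $L^2$-orthogonal to gradients) converts the pairing into $(\curl\bs y,\curl(I-P_H^c)\bs g_h)$. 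Exploiting that $(I-P_H^c)\bs g_h$ is curl-energy orthogonal to $K_H^c$, I may subtract $P_H^c\bs y$ from $\bs y$, and then invoke the second-order approximation of Lemma \ref{lem:app_maxwell} ($r=2$) together with the $H^2$-regularity \eqref{H2reg} of Lemma \ref{lem:regularity} to get $\|\curl(\bs y-P_H^c\bs y)\|\lesssim H^2\|\curl\bs y\|_2\lesssim H^2\|\curl\bs d\|$. Combined with $\|\curl(I-P_H^c)\bs g_h\|\le\|\curl\bs g_h\|$ and one further use of the inverse inequality this yields $\|\curl\bs d\|\lesssim H^2\|\curl\bs g_h\|\lesssim H\|\bs g_h\|$. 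I expect this duality — recovering the extra power of $H$ in spite of the mismatch between the $L^2$ pairing and the curl-energy orthogonality of $(I-P_H^c)\bs g_h$ — to be the delicate step, exactly as the negative-norm estimate is the crux of the scalar case.
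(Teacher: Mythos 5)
Your proposal is correct and follows essentially the same route as the paper's own proof: the same reduction of $\bs\phi_h$ and $\bs\phi_H$ to discrete Maxwell problems on $K_h^c$ and $K_H^c$ with data $\bs g_h=\curl_h\bs u_h$ and $P_H^c\bs g_h$, the same pair of continuous auxiliary solutions compared through Lemma \ref{lem:maxwell_app}, and the same duality argument combining Lemma \ref{lem:app_maxwell} with $r=2$, the $H^2$-regularity \eqref{H2reg}, and the inverse inequality with $H/h\leq C$ to recover the extra power of $H$. The only differences are cosmetic: you make explicit the stability $\|P_H^c\bs g_h\|\lesssim\|\bs g_h\|$ that the paper uses silently, and you run the duality with the specific test function $\bs d=\bs\zeta-\tilde{\bs\zeta}$ rather than a general $\bs\psi\in K^c$.
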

\begin{proof}
  Chose the test function $\bs v_h = \curl \bs w_h$ with $\bs w_h \in \bs U_h$ in equation \eqref{eq:solutionof_h_div} to simplify the left hand side of \eqref{eq:solutionof_h_div} as
$$
  (\curl_h\bs u_h,\curl_h\curl\bs  w_h) = (\curl_h\curl\phi_h,\curl_h\curl \bs w_h) = (\curl \phi_h,\curl\curl_h\curl\bs w_h),
$$
and the right hand side becomes
$$(\bs f_h,\curl \bs w_h)
  = (\curl\bs g_h,\curl w_h) = (\bs g_h,\curl_h\curl\bs w_h)
$$
Denoted by $\bs \tau_h = \curl_h\curl \bs w_h\in K_h^c$. We get
$$
(\curl\bs \phi_h,\curl \bs \tau_h) = (\bs g_h,\bs \tau_h)   \qquad\text{for all }\tau_h\in K_h^c.
$$

Let 
 $\bs \phi\in  K^c$ satisfy the Maxwell equation:
$$ (\curl\bs \phi , \curl \bs \tau )= (Q_{ K}^c\bs g_h, \bs \tau) \quad \text{for all }\bs \tau_h \in K^c.$$
By Lemma \ref{lem:maxwell_app}, we have
$$ \|\curl(\bs \phi - \bs \phi_h) \|  \lesssim h\|\bs g_h\|. $$

When moving to the coarse space, the left hand side of equation \eqref{eq:solutionof_H_div} can be still simplified to $(\curl\bs \phi_H,\curl\tau_H)$. But the right hand side becomes 
$$
(\bs f_h,\curl \bs w_H)
  = (\curl\bs g_h,\curl w_H) \neq (\bs g_h, \curl _H\curl w_H).$$
We need to project $\bs g_h$ to the coarse space and arrives at the equation 
$$
(\curl\bs \phi_H,\curl\tau_H) = (P_H^c\bs g_h,\tau_H)   \qquad\text{for all }\tau_H\in K_H^c.
$$

Let 
 $\tilde{\bs \phi}\in  K^c$ satisfy the Maxwell equation:
$$ (\curl \tilde{\bs \phi}, \curl \bs \tau )= (Q_{ K}^cP_H^c\bs g_h, \bs \tau) \quad \text{for all }\bs \tau_h \in K^c.$$
By Lemma \ref{lem:maxwell_app}, we have
$$ \|\curl(\tilde{\bs \phi} - \bs \phi_H) \|  \lesssim H\|Q_{ K}^cP_H^c\bs g_h\| \leq H\|P_H^c\bs g_h\| \lesssim H\|\bs g_h\|. $$

By the triangle inequality, it remains to estimate $\|\curl(\bs \phi - \tilde{\bs \phi})\|$. We first write out the error equation for $\bs \phi - \tilde{\bs \phi}$
\begin{equation*}
  (\curl(\bs\phi - \tilde{\bs\phi}),\curl\bs\psi)  = (Q_K^c(\bs g_h - P_H^c\bs g_h) ,\bs\psi), \quad \text{ for all } \bs \psi \in K^c.
\end{equation*}
We then apply the standard duality argument. Let $\bs \zeta\in  K^c$ satisfies
$$
(\curl\bs\zeta,\curl\bs\tau)  =  (\bs\psi ,\bs\tau) \quad  \text{for all }\bs\tau\in  K^c
$$
Then
\begin{align*}
(Q_K^c(\bs g_h - P_H^c\bs g_h) ,\bs\psi) & = (\curl\bs\zeta, \curl Q_K^c(\bs g_h - P_H^c\bs g_h)) \\
&= (\curl\bs\zeta, \curl(\bs g_h - P_H^c\bs g_h))\\
  & =  (\curl(\bs\zeta - P_H^c\bs\zeta), \curl(\bs g_h - P_H^c\bs g_h)) \\
  & \lesssim  H^2\|\curl \bs\zeta\|_2 \|\curl \bs g_h\| \lesssim H\|\curl\bs \psi\| \|\bs g_h\|,
\end{align*}
which implies
$$
\|\curl(\bs\phi - \tilde{\bs \phi})\| \lesssim H\|\bs g_h\|.
$$

The estimate of $\|\curl \bs\phi_h - \curl\bs\phi_H\|$ then follows from the triangle inequality. 
%
%
\end{proof}

As a summary the the above results, we have the following theorem.
\begin{theorem}\label{them:A_1}
Condition (A.1) holds with $\alpha = \frac{1}{2}$, i.e. for any $\bs u_k\in\bs V_k$, there hold
\begin{equation}\label{eq:A_1} 
a_k^d((I-P_{k-1}) \bs u_k,\bs  u_k) \lesssim  \left(\frac{\|A_k^d\bs u_k\|^2}{\lambda_k}\right)^{\frac{1}{2}} a_k^d( \bs u_k,\bs u_k)^{\frac{1}{2}}.
\end{equation}
\end{theorem}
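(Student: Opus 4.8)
The plan is to follow exactly the structure of the proof of Theorem~\ref{them:App_curl}, assembling the three component estimates already established for the Hodge decomposition of $\bs u_h$ in $\bs V_h$. The first step is to identify $(I-P_{k-1})\bs u_k$. Because $\bs f_h = A_h^d\bs u_h$, the coarse solution $\bs u_H$ of~\eqref{eq:solutionof_H_div} is precisely the Galerkin projection $P_{k-1}\bs u_h$: indeed $a_H^d(\bs u_H,\bs v_H) = (\bs f_h,\bs v_H) = a_h^d(\bs u_h,\bs v_H)$ for all $\bs v_H\in\bs V_H$, which is the defining relation of $P_{k-1}\bs u_h$. Hence $(I-P_{k-1})\bs u_k = \bs u_h - \bs u_H$, and subtracting the decompositions~\eqref{eq:u_h_decomp_div} and~\eqref{eq:u_H_decomp_div} gives
\[
\bs u_h - \bs u_H = (\curl\bs\phi_h - \curl\bs\phi_H) + (\bs u_{0,h} - \bs u_{0,H}) - \bs e_H.
\]
Writing $\delta_1 = \bs u_{0,h} - \bs u_{0,H}$ and $\delta_2 = \curl\bs\phi_h - \curl\bs\phi_H$, bilinearity yields
\[
a_h^d((I-P_{k-1})\bs u_h,\bs u_h) = a_h^d(\delta_1,\bs u_h) + a_h^d(\delta_2,\bs u_h) - a_h^d(\bs e_H,\bs u_h).
\]

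Next I would bound the three terms, and here the treatment is deliberately asymmetric. For $\delta_1$ and $\delta_2$ I would use $a_h^d(\cdot,\bs u_h) = (A_h^d\bs u_h,\cdot)$ together with the $L^2$ Cauchy--Schwarz inequality, reducing matters to $L^2$ estimates of order $H\|\bs u_h\|_{A_h^d}$. The bound $\|\delta_2\| \lesssim H\|\bs u_h\|_{A_h^d}$ is exactly Lemma~\ref{lem:phi_app_div}. For $\delta_1$, Lemma~\ref{lem:u_0_H_L2_div} gives $\|\delta_1\| \lesssim H\|\div\bs u_{0,h}\|$; since $\curl\bs\phi_h \in Z_h^d$ is divergence free one has $\div\bs u_{0,h} = \div\bs u_h$, so $\|\div\bs u_{0,h}\| = \|\div\bs u_h\| \le \|\bs u_h\|_{A_h^d}$. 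For the $\bs e_H$ term I would instead apply Cauchy--Schwarz in the $a_h^d$ inner product, $|a_h^d(\bs e_H,\bs u_h)| \le \|\bs e_H\|_{A_h^d}\|\bs u_h\|_{A_h^d}$, and invoke Lemma~\ref{lem:e_H_div} (the face-element analogue of Lemma~\ref{lem:e_H_app}, supplying $\|\bs e_H\|_{A_h^d} \lesssim H\|A_h^d\bs u_h\|$) to control the defect $\bs e_H$ in energy norm. Collecting the three bounds produces
\[
a_h^d((I-P_{k-1})\bs u_h,\bs u_h) \lesssim H\,\|\bs u_h\|_{A_h^d}\,\|A_h^d\bs u_h\|.
\]

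Finally I would convert the factor $H$ into spectral data: since $H \eqsim h \eqsim \lambda_k^{-1/2}$ (using $\lambda_k \eqsim h_k^{-2}$ and the bounded ratio $H/h$), the last display becomes $\left(\|A_k^d\bs u_k\|^2/\lambda_k\right)^{1/2}a_k^d(\bs u_k,\bs u_k)^{1/2}$, which is~\eqref{eq:A_1}.

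I do not expect a serious obstacle inside this theorem itself, since it is purely an assembly step that mirrors Theorem~\ref{them:App_curl}; the genuine difficulty has already been absorbed into the constituent lemmas, above all Lemma~\ref{lem:phi_app_div}, whose proof relies on the duality argument together with the $H^2$ Maxwell regularity of Lemma~\ref{lem:regularity}. Within the assembly, the only point that requires care is the asymmetric pairing just described: $\delta_1$ and $\delta_2$ must be paired with $\|A_h^d\bs u_h\|$ through their $L^2$ norms, whereas $\bs e_H$, being a defect for which only an energy estimate is available, must be paired with $\|\bs u_h\|_{A_h^d}$ through its energy norm. Keeping the two roles of the Cauchy--Schwarz inequality straight is what makes all three contributions collapse to the single scaling $H\|\bs u_h\|_{A_h^d}\|A_h^d\bs u_h\|$.
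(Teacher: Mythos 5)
Your proof is correct and takes essentially the same approach the paper intends: the paper omits the proof of Theorem~\ref{them:A_1} precisely because it is the verbatim analogue of Theorem~\ref{them:App_curl}, assembling Lemmas~\ref{lem:u_0_H_L2_div}, \ref{lem:e_H_div} and \ref{lem:phi_app_div} exactly as you do, with the same asymmetric pairing ($L^2$ norms of $\delta_1,\delta_2$ against $\|A_h^d\bs u_h\|$, energy norm of $\bs e_H$ against $\|\bs u_h\|_{A_h^d}$) and the same conversion $H \eqsim \lambda_k^{-1/2}$. You also correctly read Lemma~\ref{lem:e_H_div} as supplying $\|\bs e_H\|_{A_h^d} \lesssim H\|A_h^d\bs u_h\|$, restoring the factor $H$ that is evidently dropped by typo in the paper's statement, as its curl analogue Lemma~\ref{lem:e_H_app} confirms.
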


\subsection{Results}
According to the multigrid framework in \cite{Bramble.J;Pasciak.J;Xu.J1991}, we conclude that the variable V-cycle multigrid algorithm is a good preconditioner for the Schur complement equations \eqref{schur-complement} and \eqref{schur-complement-div}. We summarize the result in the following theorem.

\begin{theorem}
Let $V_k$ denote the operator of one V-ycle of $MG_k$ in Algorithm 2 with homogenous data, i.e., $\bs f_k = 0$.
Assume the smoothing steps $m_k$ satisfy
$$  \beta_0 m_k \leq m_{k-1} \leq \beta_1 m_k. $$
Here we assume that $\beta_0$ and $\beta_1$ are constants which are greater than one and independent of $k$. Then the condition number of $V_JA_J$ is $\mathcal O(1)$.
\end{theorem}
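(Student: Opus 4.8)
The plan is to follow the variable V-cycle analysis of Bramble, Pasciak, and Xu \cite{Bramble.J;Pasciak.J;Xu.J1991}, now that both hypotheses of their framework are in hand: the approximation assumption (A.1) with $\alpha = \tfrac12$ is Theorems \ref{them:App_curl} and \ref{them:A_1}, while the smoothing property (A.2) was verified in the smoothing subsection for the symmetric Gauss--Seidel (or weighted Jacobi) smoother. Since the smoother $R_k$ is symmetric and the same number $m_k$ of pre- and post-smoothings is used, the V-cycle operator $V_k$ is symmetric; hence $V_J A_J$ is self-adjoint in the $a_J$-inner product and its condition number is the ratio of its extreme eigenvalues. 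Writing $\tilde E_k = I - V_k A_k$ for the error propagation operator, it therefore suffices to prove a two-sided bound $0 \le a_J(\tilde E_J \bs u, \bs u) \le \delta\, a_J(\bs u, \bs u)$ with $\delta < 1$ independent of $J$, since this gives $(1-\delta)\, a_J(\bs u,\bs u) \le a_J(V_J A_J \bs u, \bs u) \le a_J(\bs u,\bs u)$ and hence $\kappa(V_J A_J) \le 1/(1-\delta)$.

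First I would record the recursion for $\tilde E_k$. Using $Q_{k-1} A_k = A_{k-1} P_{k-1}$ and the smoothing error operator $K_k = I - R_k A_k$, the algorithm unfolds to
\begin{equation*}
\tilde E_k = K_k^{m_k}\big[(I - \bar P_{k-1}) + I^k \tilde E_{k-1} P_{k-1}\big] K_k^{m_k},
\end{equation*}
where $\bar P_{k-1} = I^k P_{k-1}$ is the $a_k$-orthogonal projection onto $\mcal V_{k-1}$. The upper bound $a_k(\tilde E_k \bs u, \bs u) \ge 0$ (equivalently $a_k(V_kA_k\bs u,\bs u)\le a_k(\bs u,\bs u)$) follows by induction: the bracketed middle operator is the sum of the non-negative $I - \bar P_{k-1}$ and $I^k \tilde E_{k-1} P_{k-1}$, the latter non-negative because $\tilde E_{k-1} \ge 0$ and $P_{k-1}, I^k$ are mutually adjoint; conjugation by the self-adjoint $K_k^{m_k}$ preserves non-negativity, with the base case $\tilde E_1 = 0$.

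The core of the argument is the lower bound, i.e. the decay of $\delta_k := \sup_{\bs u\neq 0} a_k(\tilde E_k\bs u,\bs u)/a_k(\bs u,\bs u)$. Setting $\bs w = K_k^{m_k}\bs u$ and using the recursion together with $P_{k-1}$ being the $a$-projection, I would derive
\begin{equation*}
a_k(\tilde E_k \bs u, \bs u) \le \delta_{k-1}\, a_k(\bs w, \bs w) + (1 - \delta_{k-1})\, a_k((I - P_{k-1})\bs w, \bs w).
\end{equation*}
Now (A.1) controls $a_k((I-P_{k-1})\bs w, \bs w) \le C_A (\|A_k\bs w\|^2/\lambda_k)^{1/2} a_k(\bs w,\bs w)^{1/2}$, while the smoothing property (A.2), through the spectral calculus of the $a_k$-self-adjoint operator $R_kA_k$, yields the standard damping estimate $\|A_k\bs w\|^2/\lambda_k = \|A_k K_k^{m_k}\bs u\|^2/\lambda_k \lesssim m_k^{-1} a_k(\bs u,\bs u)$. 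Since also $a_k(\bs w,\bs w) \le a_k(\bs u,\bs u)$, these combine to $a_k((I-P_{k-1})\bs w,\bs w) \le \gamma\, m_k^{-1/2} a_k(\bs u,\bs u)$ for a uniform constant $\gamma$, and hence to the scalar recursion
\begin{equation*}
1 - \delta_k \ge (1 - \delta_{k-1})\big(1 - \gamma\, m_k^{-1/2}\big).
\end{equation*}

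Finally I would close the recursion using the geometric growth of the smoothing counts. With $\delta_1 = 0$, iteration gives $1 - \delta_J \ge \prod_{k=2}^J (1 - \gamma m_k^{-1/2})$. The hypothesis $\beta_0 m_k \le m_{k-1}$ with $\beta_0 > 1$ forces $m_k \ge \beta_0^{J-k} m_J$, so $\sum_{k} m_k^{-1/2} \le m_J^{-1/2}/(1 - \beta_0^{-1/2})$ is bounded independently of $J$; provided the finest-level count $m_J$ exceeds the fixed threshold $\gamma^{2}$ (so every factor is positive), the product is bounded below by $\exp(-c\sum_k \gamma m_k^{-1/2})$, a positive constant uniform in $J$. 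This yields $\delta_J \le \delta < 1$ and the claimed $\kappa(V_J A_J) = \mathcal O(1)$. I expect the main obstacle to be the smoothing/approximation interpolation step — establishing the damping estimate $\|A_k K_k^{m_k}\bs u\|^2/\lambda_k \lesssim m_k^{-1} a_k(\bs u,\bs u)$ from (A.2) and correctly coupling it with the sub-unit exponent $\alpha = \tfrac12$ of (A.1); it is precisely this $\alpha < 1$ that rules out a fixed-step V-cycle and makes the geometric growth of $m_k$ essential for the uniform bound.
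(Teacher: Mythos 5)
Your overall strategy is the paper's: the paper offers no detailed proof of this theorem, but simply feeds Theorems \ref{them:App_curl} and \ref{them:A_1} (condition (A.1) with $\alpha=\tfrac12$) and the Gauss--Seidel smoothing property (A.2) into the variable V-cycle framework of \cite{Bramble.J;Pasciak.J;Xu.J1991}. However, your reconstruction of that framework contains a genuine gap at its very first step. You assert that $\bar P_{k-1}=I^kP_{k-1}$ is the $a_k$-orthogonal projection onto $\mcal V_{k-1}$ and hence that $I-\bar P_{k-1}\ge 0$. This is false here, because the coarse forms are \emph{non-inherited}: by definition $a_{k-1}(P_{k-1}\bs u,\bs v)=a_k(\bs u,\bs v)$ for $\bs v\in\mcal V_{k-1}$, but for $\bs v\in\mcal V_{k-1}$ one has $\div_{k-1}\bs v$ equal to the $L^2$-projection of $\div_k\bs v$ onto $S_{k-1}$, so $a_{k-1}(\bs v,\bs v)\le a_k(\bs v,\bs v)$ with strict inequality in general (this is exactly the non-inheritance the paper emphasizes, caused by the inverse mass matrices). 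One checks that $\bar P_{k-1}$ is $a_k$-self-adjoint and nonnegative, but it is not idempotent and not a contraction: since
\begin{equation*}
a_k\bigl((I-\bar P_{k-1})\bs u,\bs u\bigr)=a_k(\bs u,\bs u)-a_{k-1}(P_{k-1}\bs u,P_{k-1}\bs u),
\end{equation*}
and $a_{k-1}(P_{k-1}\bs u,P_{k-1}\bs u)$ may exceed $a_k(\bs u,\bs u)$, the operator $I-\bar P_{k-1}$ can be indefinite. This is precisely why (A.1) is stated with an absolute value — a detail you silently dropped when invoking it.

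The consequence is that both halves of your claimed two-sided bound fail: neither $\tilde E_J\ge 0$ nor $a_J(V_JA_J\bs u,\bs u)\le a_J(\bs u,\bs u)$ holds in general. Indeed, your conclusion would contradict the paper's own Remark following the theorem: if $0\le\tilde E_J\le\delta I$ with $\delta<1$, then $\tilde E_J^2\le\delta^2 I$ and two V-cycles would always yield an SPD operator, whereas the paper (following \cite{Bramble.J;Pasciak.J1992}) notes they may not be positive definite — precisely because $\|\tilde E_J\|_{a_J}$ can exceed one. The correct argument in \cite{Bramble.J;Pasciak.J;Xu.J1991} runs a \emph{two-sided} induction, proving $\eta_0(k)\,a_k(\bs u,\bs u)\le a_k(V_kA_k\bs u,\bs u)\le\eta_1(k)\,a_k(\bs u,\bs u)$ with $\eta_0(k)=m_k^{\alpha}/(M+m_k^{\alpha})$ and $\eta_1(k)=(M+m_k^{\alpha})/m_k^{\alpha}$ (note $\eta_1>1$ is permitted), using the identity $a_{k-1}(P_{k-1}\bs w,P_{k-1}\bs w)=a_k(\bs w,\bs w)-a_k((I-\bar P_{k-1})\bs w,\bs w)$ together with the absolute value in (A.1) to control the indefinite term in both directions; the geometric growth $\beta_0 m_k\le m_{k-1}$ then makes $M$ uniform in $J$. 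A secondary flaw in your write-up: your product $\prod_{k}(1-\gamma m_k^{-1/2})$ requires every factor positive, i.e.\ the hidden extra hypothesis $m_J>\gamma^2$, which is not in the theorem; the BPX choice of $\delta_k$ needs no threshold on $m_J$. Your supporting ingredients — the error-propagation recursion, the damping estimate $\lambda_k^{-1}\|A_kK_k^{m_k}\bs u\|^2\lesssim m_k^{-1}a_k(\bs u,\bs u)$, and the summability of $m_k^{-1/2}$ under geometric growth — are correct and survive intact once transplanted into the two-sided induction.
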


\begin{remark}\rm
 As noticed in \cite{Bramble.J;Pasciak.J1992}, W-cycle or two V-cycles may not be a valid preconditioner as the corresponding operator may not be positive definite. In other words, the proposed multigrid method for the Schur complement cannot be used as an iterative method but one V-cycle can be used as an effective preconditioner. \qed
\end{remark}

\section{Uniform Preconditioner}
In this section, we will show that the multigrid solver for the Schur complement equations can be used to build efficient preconditioners for the mixed formulations of vector Laplacian \eqref{matvecLap} and \eqref{matvecLap-div}. We also apply the multigrid preconditioner of the vector Laplacian to the Maxwell equation discretized as a saddle point system. We prove that the preconditioned systems have condition numbers independent of mesh parameter $h$.

\subsection{Block Diagonal Preconditioner}
It is easy to see that the inverses of the symmetric positive definite matrices $M_{v}$, $M_{e}$, $A_{h}^{c}$ and $A_{h}^{d}$ exist, which implies the existence of the operators $(\mcal L_{h}^{c})^{-1}$, $(\mcal L_{h}^{d})^{-1}$, and the block diagonal preconditioners defined as following.
\begin{definition}
We define the operator $\mcal P_{h}^{c} :  S_{h}'\times \bs U_{h}' \rightarrow S_{h}\times \bs U_{h}$ with the matrix representation
\begin{equation}
\mcal P_{h}^{c}
=
\begin{pmatrix}
 M_v^{-1} &0 \\
0& (A^{c}_{h})^{-1}
\end{pmatrix},
\end{equation}
and the operator $\mcal P_{h}^{d} :  \bs U_{h}'\times \bs V_{h}' \rightarrow \bs U_{h}\times \bs V_{h}$ with the matrix representation
\begin{equation}
\mcal P_{h}^{d}
=
\begin{pmatrix}
 M_e^{-1} &0 \\
0& (A^{d}_{h})^{-1}
\end{pmatrix}.
\end{equation}
\end{definition}

Follow the framework in~\cite{MardalWinther2004}, it suffices to prove the boundedness of operators $\mcal L_{h}^{c}$ and $\mcal L_{h}^{d}$ and their inverse in appropriate norms. 
In the sequel, to unify the notation, we use $M$ for the mass matrix and $A$ the vector Laplacian. The inverse of the mass matrix can be thought of as the matrix representation of the Riesz representation induced by the $L^2$-inner product and the inverse of $A$ is the Riesz representation of the $A$-inner product. The preconditioners $\mcal P_{h}^{c}$ and $\mcal P_{h}^{d}$ are Riesz representation of $L^2\times A$-inner product. Let $\langle \cdot, \cdot \rangle$ be the duality pair in $\mcal V_h$. We clarify the norm notations using $M$ and $A$ as follows:
\begin{itemize}
\item $\|\cdot\|_{M}$: \quad $\|\sigma_{h}\|_{M}^{2} = \langle M \sigma_{h}, \sigma_{h}\rangle$;
\smallskip
\item $\|\cdot\|_{A}$: \quad $\|u_{h}\|_{A}^{2} = \langle A_{h} u_{h}, u_{h}\rangle $;
\smallskip
\item $\|\cdot\|_{M^{-1}}$: $\|g_{h}\|_{M^{-1}}^{2} = \langle M^{-1}g_{h}, g_{h}\rangle $;
\smallskip
\item $\|\cdot\|_{A^{-1}}$: $\|f_{h}\|_{A^{-1}}^{2} = \langle A_{h}^{-1} f_{h}, f_{h}\rangle $.
\end{itemize}
\smallskip

The following lemma gives a bound of the Schur complement $BA^{-1}B^T$ similar to the corresponding result of the Stokes equation.

\begin{lemma}\label{lm:BAB}
We have the inequality
\begin{equation}\label{BAB}
\langle B(A_{h}^{c})^{-1} B^{T} \phi_{h}, \phi_{h}\rangle \leq \langle M_{v}\phi_{h}, \phi_{h}\rangle\quad \text{for all } \phi_{h} \in S_{h},
\end{equation}
\end{lemma}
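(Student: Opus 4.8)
The plan is to use the standard variational (``$\sup$'') characterization of a Schur-complement quadratic form and then discard the curl contribution. Since $A_h^c$ is symmetric positive definite, for every $\phi_h\in S_h$ one has
$$
\langle B(A_h^c)^{-1}B^T\phi_h,\phi_h\rangle=\sup_{\bs v_h\in\bs U_h}\frac{\langle B^T\phi_h,\bs v_h\rangle^2}{a_h^c(\bs v_h,\bs v_h)},
$$
so it suffices to show that the numerator is bounded by $\langle M_v\phi_h,\phi_h\rangle\, a_h^c(\bs v_h,\bs v_h)$ uniformly in $\bs v_h$. Equivalently, one may set $\bs w_h=(A_h^c)^{-1}B^T\phi_h$ and use $\langle B(A_h^c)^{-1}B^T\phi_h,\phi_h\rangle=a_h^c(\bs w_h,\bs w_h)$; I will present the $\sup$ version since it avoids solving an auxiliary problem.

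The first step is to rewrite the pairing in the numerator. Because $B^T=M_eG$ represents the (scaled) gradient, we have $\langle B^T\phi_h,\bs v_h\rangle=(\grad\phi_h,\bs v_h)$, and the defining adjoint relation \eqref{weakdiv} of the weak divergence gives
$$
\langle B^T\phi_h,\bs v_h\rangle=(\grad\phi_h,\bs v_h)=-(\phi_h,\div_{h}\bs v_h).
$$
Next I would apply Cauchy--Schwarz in $L^2(\Omega)$ together with the trivial bound $\|\div_{h}\bs v_h\|^2\le\|\curl\bs v_h\|^2+\|\div_{h}\bs v_h\|^2=a_h^c(\bs v_h,\bs v_h)$ coming straight from Definition \ref{Ah_definition}:
$$
\langle B^T\phi_h,\bs v_h\rangle^2\le\|\phi_h\|^2\,\|\div_{h}\bs v_h\|^2\le\|\phi_h\|^2\,a_h^c(\bs v_h,\bs v_h)=\langle M_v\phi_h,\phi_h\rangle\, a_h^c(\bs v_h,\bs v_h).
$$
Dividing by $a_h^c(\bs v_h,\bs v_h)$ and taking the supremum over $\bs v_h\in\bs U_h$ yields \eqref{BAB}.

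There is essentially no hard step here; the whole argument is one application of the weak-divergence adjoint identity followed by Cauchy--Schwarz. The only point requiring care is the bookkeeping of the operators: one must correctly identify $B^T$ with the gradient acting into $\bs U_h$ and recognize that the term $C^TM_fC$ in $A_h^c=B^TM_v^{-1}B+C^TM_fC$ plays no adversarial role. Indeed, it is precisely the presence of the nonnegative curl part $\|\curl\bs v_h\|^2$ in $a_h^c$ that is being thrown away in the inequality $\|\div_{h}\bs v_h\|^2\le a_h^c(\bs v_h,\bs v_h)$; since enlarging $A_h^c$ only shrinks $(A_h^c)^{-1}$, the bound is consistent and, as in the Stokes case, it is sharp up to the curl contribution.
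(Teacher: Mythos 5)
Your proposal is correct and is essentially the paper's own argument: the paper also writes $\langle B(A_h^c)^{-1}B^T\phi_h,\phi_h\rangle=\|\bs v_h\|_A^2$ with $\bs v_h=(A_h^c)^{-1}B^T\phi_h$ and then uses exactly your $\sup$ characterization via the Riesz map in the $A$-inner product. Your step $\|\div_h\bs v_h\|^2\le a_h^c(\bs v_h,\bs v_h)$ is just the weak-operator form of the paper's matrix inequality $\|B\bs u_h\|_{M^{-1}}\le\|\bs u_h\|_A$ drawn from the identity \eqref{Ahc}, so the two proofs coincide up to notation.
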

\begin{proof}
Let $\bs v_{h} = (A_{h}^{c})^{-1} B^{T} \phi_{h}$. Then
\begin{align*}
\langle  B(A_{h}^{c})^{-1} B^{T} \phi_{h}, \phi_{h} \rangle =\langle  (A_{h}^{c})^{-1} B^{T} \phi_{h}, B^{T}\phi_{h}\rangle = \langle A_{h}^{c} \bs v_{h}, \bs v_{h}\rangle = \|\bs v_{h}\|_{A}^{2}.
\end{align*}
Now we identify $\bs v_{h}\in \bs V_{h}'$ by the Riesz map in the $A$-inner product, and then we have
\begin{align*}
\|\bs v_{h}\|_{A} &= \sup_{\bs u_{h}\in \bs V_{h}}\dfrac{\langle \bs v_{h},\bs u_{h}\rangle _{A}}{\|\bs u_{h}\|_{A}}
= \sup_{\bs u_{h}\in \bs V_{h}}\dfrac{\langle B^{T} \phi_{h},\bs u_{h}\rangle}{\|\bs u_{h}\|_{A}}
= \sup_{\bs u_{h}\in \bs V_{h}}\dfrac{\langle \phi_{h},B\bs u_{h}\rangle}{\|\bs u_{h}\|_{A}} \\
&\leq \sup_{\bs u_{h}\in \bs V_{h}}\dfrac{\|\phi_{h}\|_M\|B\bs u_{h}\|_{M^{-1}}}{\|\bs u_{h}\|_{A}}
\leq \|\phi_{h}\|_M.
\end{align*}
In the last step, we have used the identity \eqref{Ahc} which implies $\|B\bs u_{h}\|_{M^{-1}}\leq \|\bs u_{h}\|_{A}$.
The desired result \eqref{BAB} then follows easily.
%
\end{proof}

We present a stability result of the mixed formulation of the vector Laplacian which is different with that established in \cite{Arnold2006}.

\begin{theorem}\label{l-bound}
The operators $\mcal L_{h}^{c}, \mcal L_{h}^{d}$ and there inverse are both bounded operators:
$$
\|\mcal L_{h}^{c}\|_{{\rm L}( S_{h}\times \bs U_{h} , S_{h}'\times \bs U_{h}')},
\|\mcal L_{h}^{d}\|_{{\rm L}( \bs U_{h}\times \bs V_{h}, \bs U_{h}'\times \bs V_{h}')},
$$
are bounded  and independent of $h$ from $(\|\cdot\|_{M^{-1}}, \|\cdot\|_{A^{-1}}) \to (\|\cdot\|_{M}, \|\cdot\|_{A}),$ and
$$
 \|(\mcal L_{h}^{c})^{-1}\|_{{\rm L}( S_{h}'\times \bs U_{h}' , S_{h}\times \bs U_{h})},
\|(\mcal L_{h}^{d})^{-1}\|_{{\rm L}( \bs U_{h}'\times \bs V_{h}', \bs U_{h}\times \bs V_{h})}
$$
are bounded and independent of $h$ from $(\|\cdot\|_{M}, \|\cdot\|_{A}) \to (\|\cdot\|_{M^{-1}}, \|\cdot\|_{A^{-1}})$.
\end{theorem}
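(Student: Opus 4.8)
The plan is to read Theorem \ref{l-bound} as the single assertion that $\mcal L_h^c$ (and likewise $\mcal L_h^d$) is a self-adjoint isomorphism between the Hilbert space $X_h := S_h\times \bs U_h$ carrying the product norm $\|(\sigma_h,\bs u_h)\|_{X_h}^2 = \|\sigma_h\|_M^2 + \|\bs u_h\|_A^2$ and its dual $X_h'$ carrying $\|(g_h,\bs f_h)\|_{X_h'}^2 = \|g_h\|_{M^{-1}}^2 + \|\bs f_h\|_{A^{-1}}^2$, with the norms of $\mcal L_h^c$ and $(\mcal L_h^c)^{-1}$ bounded independently of $h$. Since the bilinear form $\langle \mcal L_h^c(\sigma_h,\bs u_h),(\tau_h,\bs v_h)\rangle$ is symmetric, I would invoke the Banach--Ne\v{c}as--Babu\v{s}ka theorem: it suffices to establish (i) continuity, which bounds $\mcal L_h^c$, and (ii) an $h$-independent inf-sup condition, which bounds its inverse by the reciprocal of the inf-sup constant. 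Throughout I would use the decomposition $\|\bs u_h\|_A^2 = \|B\bs u_h\|_{M^{-1}}^2 + \|C\bs u_h\|_{M_f}^2$, which is just the identity \eqref{Ahc}, together with its immediate consequence $\|B\bs u_h\|_{M^{-1}}\le \|\bs u_h\|_A$ and the bound $\|B^T\sigma_h\|_{A^{-1}}\le \|\sigma_h\|_M$ furnished by Lemma \ref{lm:BAB}.

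For continuity I would estimate the two output components of $(g_h,\bs f_h) = \mcal L_h^c(\sigma_h,\bs u_h)$ separately. Using $\|M_v\sigma_h\|_{M^{-1}} = \|\sigma_h\|_M$ one gets $\|g_h\|_{M^{-1}} = \|-M_v\sigma_h + B\bs u_h\|_{M^{-1}} \le \|\sigma_h\|_M + \|B\bs u_h\|_{M^{-1}} \le \|\sigma_h\|_M + \|\bs u_h\|_A$. For the second component, Lemma \ref{lm:BAB} gives $\|B^T\sigma_h\|_{A^{-1}}\le\|\sigma_h\|_M$, while the L\"owner ordering $0\le C^TM_fC\le A_h^c$ yields $\|C^TM_fC\bs u_h\|_{A^{-1}}\le \|\bs u_h\|_A$; hence $\|\bs f_h\|_{A^{-1}}\lesssim \|\sigma_h\|_M + \|\bs u_h\|_A$. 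Summing proves $\|\mcal L_h^c(\sigma_h,\bs u_h)\|_{X_h'}\lesssim \|(\sigma_h,\bs u_h)\|_{X_h}$.

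The crux, and the step I expect to be the main obstacle, is the inf-sup lower bound, because the diagonal blocks by themselves do not control the full $X_h$-norm: the $(1,1)$ block $-M_v$ is negative definite and the $(2,2)$ block $C^TM_fC$ is only positive semidefinite (it vanishes on $\ker C$), so the ``divergence'' contribution $\|B\bs u_h\|_{M^{-1}}$ to $\|\bs u_h\|_A$ must be recovered through the off-diagonal coupling. I would construct an explicit test function. Testing $(\sigma_h,\bs u_h)$ against $(-\sigma_h,\bs u_h)$ makes the two cross terms cancel and leaves $\|\sigma_h\|_M^2 + \|C\bs u_h\|_{M_f}^2$; testing against $(M_v^{-1}B\bs u_h,0)$ leaves $\|B\bs u_h\|_{M^{-1}}^2 - \langle \sigma_h, B\bs u_h\rangle$. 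Taking the combination $(\tau_h,\bs v_h) = (-\sigma_h + \delta M_v^{-1}B\bs u_h,\ \bs u_h)$ for a fixed small $\delta$ and absorbing $\delta\langle\sigma_h,B\bs u_h\rangle$ by Young's inequality, I obtain $\langle \mcal L_h^c(\sigma_h,\bs u_h),(\tau_h,\bs v_h)\rangle \ge \tfrac12\|\sigma_h\|_M^2 + \|C\bs u_h\|_{M_f}^2 + (\delta-\tfrac{\delta^2}{2})\|B\bs u_h\|_{M^{-1}}^2 \gtrsim \|\sigma_h\|_M^2 + \|\bs u_h\|_A^2$. Since $\|M_v^{-1}B\bs u_h\|_M = \|B\bs u_h\|_{M^{-1}}\le\|\bs u_h\|_A$, the test function obeys $\|(\tau_h,\bs v_h)\|_{X_h}\lesssim\|(\sigma_h,\bs u_h)\|_{X_h}$, which yields an inf-sup constant $\gamma>0$ independent of $h$.

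Combining continuity with this inf-sup bound, the Banach--Ne\v{c}as--Babu\v{s}ka theorem shows $\mcal L_h^c$ is an isomorphism with $\|(\mcal L_h^c)^{-1}\|_{{\rm L}(X_h',X_h)}\le \gamma^{-1}$, all constants being $h$-free. The statement for $\mcal L_h^d$ follows by the identical argument after the substitutions $M_v\leftrightarrow M_e$, $B\leftrightarrow C^T$, and $C^TM_fC\leftrightarrow B^TM_tB$, using the analogue of Lemma \ref{lm:BAB} for $A_h^d = CM_e^{-1}C^T + B^TM_tB$.
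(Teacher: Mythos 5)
Your proposal is correct, and its second half takes a genuinely different route from the paper. The continuity estimate essentially coincides with the paper's: both rest on the identity \eqref{Ahc} (giving $\|B\bs u_h\|_{M^{-1}}\leq \|\bs u_h\|_A$) and on Lemma~\ref{lm:BAB} (giving $\|B^T\phi_h\|_{A^{-1}}\leq\|\phi_h\|_M$); the paper merely writes $\bs f_h = A_h^c\bs u_h - B^TM_v^{-1}g_h$ where you write $\bs f_h = B^T\sigma_h + C^TM_fC\bs u_h$, and your Löwner-ordering bound $\|C^TM_fC\bs u_h\|_{A^{-1}}\leq\|\bs u_h\|_A$ is a valid substitute for the paper's $\|A_h^c\bs u_h\|_{A^{-1}}=\|\bs u_h\|_A$. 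For the inverse bound, however, the paper does not set up an inf-sup condition at all: it inverts the system explicitly through the Schur complement, solving $\bs u_h = (A_h^c)^{-1}(\bs f_h + B^TM_v^{-1}g_h)$ and $\sigma_h = M_v^{-1}(B\bs u_h - g_h)$, and estimating each piece with a second application of Lemma~\ref{lm:BAB}; this is shorter, yields explicit constants (the $4$ and $6$ in the proof), and needs no Babu\v{s}ka machinery. Your route instead constructs the test function $(-\sigma_h + \delta M_v^{-1}B\bs u_h,\,\bs u_h)$ and verifies an $h$-uniform Babu\v{s}ka inf-sup constant (your algebra checks out: the cross terms cancel by symmetry, Young's inequality leaves $\tfrac12\|\sigma_h\|_M^2 + \|C\bs u_h\|_{M_f}^2 + (\delta - \tfrac{\delta^2}{2})\|B\bs u_h\|_{M^{-1}}^2$, and with $\delta=1$ this is $\geq\tfrac12(\|\sigma_h\|_M^2+\|\bs u_h\|_A^2)$ while the test function norm is controlled). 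What your approach buys is that Lemma~\ref{lm:BAB} is needed only for continuity, not for invertibility, and that the argument follows the generic pattern for symmetric saddle point systems, so it transfers verbatim to perturbed operators whose $(2,2)$ block is not exactly $A - B^TM^{-1}B$; what the paper's elimination buys is brevity and the observation that, since $A_h^c$ is \emph{defined} so that \eqref{Ahc} holds exactly (also after mass lumping, as in $\widetilde{\mcal L_h^c}$), the inverse can simply be written down. Both arguments are purely algebraic, so $h$-independence is automatic in each; your closing reduction of the $\mcal L_h^d$ case by the substitutions $M_v\leftrightarrow M_e$, $B\leftrightarrow C^T$ matches the paper's ``the proof of the $\bs H_0(\div)$ case is similar.''
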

\begin{proof}
We prove the $\bs H_0(\curl)$ case below. The proof of the $\bs H_0(\div)$ case is similar.

Let $(\sigma_{h}, \bs u_{h}) \in  S_{h}\times \bs U_{h}$ and $(g_{h}, \bs f_{h}) \in S_{h}'\times \bs U_{h}'$ be given by the relation with
\begin{equation}\label{Lhc}
\mcal L_{h}^{c}
\begin{pmatrix}
\sigma_h\\
\bs u_h
\end{pmatrix}
=
\begin{pmatrix}
-M_v & B\\
B^T & C^TM_fC
\end{pmatrix}
\begin{pmatrix}
\sigma_h\\
\bs u_h
\end{pmatrix}
=
\begin{pmatrix}
g_{h}\\
\bs f_{h}
\end{pmatrix}.
\end{equation}
To prove $\|\mcal L_{h}^{c}\|_{{\rm L}( S_{h}\times \bs U_{h} , S_{h}'\times \bs U_{h}')} \lesssim 1$, it is sufficient to prove
\begin{equation}\label{boundL}
\|g_{h}\|_{M^{-1}} + \|\bs f_{h}\|_{A^{-1}} \lesssim \|\sigma_{h}\|_{M} + \|\bs u_{h}\|_{A}.
\end{equation}
From \eqref{Lhc}, we have $g_{h}= -M_{v}\sigma_{h} + B \bs u_{h}$ and $\bs f_{h} = A_{h}^{c}\bs u_{h} - B^{T}M_{v}^{-1}g_{h}$. The norm of $g_h$ is easy to bound as follows
\begin{align*}
\|g_{h}\|_{M^{-1}}^{2}\leq 2 \|M_v\sigma_{h}\|_{M^{-1}}^{2} + 2\|B\bs u_{h}\|_{M^{-1}}^{2}\leq 2\|\sigma_{h}\|_{M}^{2} + 2\|\bs u_{h}\|_{A}^{2}.
\end{align*}
To bound the norm of $\bs f_h$, we first have
\begin{align*}
\|\bs f_{h}\|_{A^{-1}}^{2}\leq 2\|B^{T} M_{v}^{-1} g_{h}\|_{A^{-1}}^{2} + 2\|A_{h}^{c} \bs u_{h}\|_{A^{-1}}^{2}  \leq 2\|B^{T} M_{v}^{-1} g_{h}\|_{A^{-1}}^{2} + 2\|\bs u_{h}\|_{A}^{2}.
\end{align*}
Let $\phi_{h}  = M_{v}^{-1} g_{h}$, by Lemma \ref{lm:BAB}, we have
\begin{align*}
\|B^{T} M_{v}^{-1} g_{h}\|_{A^{-1}}^{2}=\|B^{T} \phi_{h}\|_{A^{-1}}^{2} = \langle B (A_{h}^{c})^{-1} B^{T} \phi_{h}, \phi_{h}\rangle \leq \|\phi_{h}\|_M^2= \|g_{h}\|_{M^{-1}}^{2}.
\end{align*}
Thus we get
\begin{align*}
\|\bs f_{h}\|_{A^{-1}}^{2} \leq 2\|g_{h}\|_{M^{-1}}^{2}+ 2\|\bs u_{h}\|_{A}^{2}\leq 4\|\sigma_{h}\|_{M}^{2} + 6\|\bs u_{h}\|_{A}^{2}.
\end{align*}
Then the desired inequality \eqref{boundL} follows from the bound of $\|g_h\|_{M^{-1}}$ and $\|\bs f_h\|_{A^{-1}}$.

To prove $ \|(\mcal L_{h}^{c})^{-1}\|_{{\rm L}( S_{h}'\times \bs U_{h}' , S_{h}\times \bs U_{h})} \lesssim 1$, we need to prove
\begin{equation}\label{boundLinverse}
 \|\sigma_{h}\|_{M} + \|\bs u_{h}\|_{A}\lesssim \|g_{h}\|_{M^{-1}} + \|\bs f_{h}\|_{A^{-1}}.
\end{equation}
From \eqref{Lhc}, we have $\bs u_{h} =( A_{h}^{c})^{-1}( f_{h} + B^{T}M_{v}^{-1}g_{h})$. Then
\begin{align*}
\|\bs u_{h}\|_{A}^{2} &= \| \bs f_{h} + B^{T}M_{v}^{-1}g_{h}\|_{A^{-1}}^{2}\\
&\leq 2\|\bs f_{h}\|_{A^{-1}}^{2} + 2\|B^{T}M_{v}^{-1}g_{h}\|_{A^{-1}}^{2}\leq 2\|\bs f_{h}\|_{A^{-1}}^{2} +2\|g_{h}\|_{M^{-1}}^{2}.
\end{align*}
We also have $\sigma_{h}= M_{v}^{-1}(B \bs u_{h}-g_{h})$ and thus
\begin{align*}
\|\sigma_{h}\|_{M}^{2} =  \|B\bs  u_{h}-g_{h}\|_{M^{-1}}^{2} \leq 2\|B\bs  u_{h}\|_{M^{-1}}^{2} + 2\|g_{h}\|_{M^{-1}}^{2} \leq 2\|\bs u_{h}\|_{A}^{2} + 2\|g_{h}\|_{M^{-1}}^{2}.
\end{align*}
Combining with the bound for $\|\bs u_h\|_A$, we obtain the desirable stability \eqref{boundLinverse}.

\end{proof}

\begin{remark}
By choosing $\bs v_h = M^{-1}_eB^T\sigma _h$, we can obtain the stability 
$$
\|B^T\sigma\|_{M^{-1}} \leq \|\bs f_h\|_{M^{-1}}.
$$
\end{remark}

From Theorem \ref{l-bound}, we can conclude that the proposed preconditioners are uniformly bounded with respect to $h$.
\begin{theorem}
The $\mcal P_{h}^{c}$ and $\mcal P_{h}^{d}$ are uniform preconditioners for $\mcal L_{h}^{c}$ and $\mcal L_{h}^{d}$, respectively, i.e., the corresponding operator norms
\begin{align*}
\|\mcal P_{h}^{c}\mcal L_{h}^{c}\|_{{\rm L}( S_{h}\times \bs U_{h} , S_{h}\times \bs U_{h})} , \|(\mcal L_{h}^{c}\mcal P_{h}^{c})^{-1}\|_{{\rm L}( S_{h}\times \bs U_{h} , S_{h}\times \bs U_{h})},\\
\|\mcal P_{h}^{d}\mcal L_{h}^{d}\|_{{\rm L}( \bs U_{h}\times \bs V_{h}, \bs U_{h}\times \bs V_{h})},
\|(\mcal P_{h}^{d}\mcal L_{h}^{d})^{-1}\|_{{\rm L}( \bs U_{h}\times \bs V_{h}, \bs U_{h}\times \bs V_{h})}
\end{align*}
are bounded and independent with parameter $h$.
\end{theorem}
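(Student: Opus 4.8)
The plan is to invoke the abstract framework of Mardal and Winther \cite{MardalWinther2004}, which reduces the uniform boundedness of the preconditioned condition number to the boundedness of $\mcal L_h$ and its inverse in the chosen pair of norms. Since the block diagonal preconditioner $\mcal P_h^c = \operatorname{diag}(M_v^{-1}, (A_h^c)^{-1})$ is precisely the Riesz representation operator for the $\|\cdot\|_M \times \|\cdot\|_A$ inner product on $S_h \times \bs U_h$, the composite operator $\mcal P_h^c \mcal L_h^c$ is a self-map of $S_h \times \bs U_h$, and its norm as well as the norm of its inverse are controlled by the operator norms of $\mcal L_h^c$ established in Theorem \ref{l-bound}. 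Concretely, for any $(\sigma_h, \bs u_h)$ the energy norm in the $M \times A$ inner product of $\mcal P_h^c \mcal L_h^c (\sigma_h, \bs u_h)$ equals the dual norm $\|\mcal L_h^c(\sigma_h,\bs u_h)\|_{M^{-1}\times A^{-1}}$, because the Riesz map is an isometry from the dual space to the primal space. Thus the forward bound from Theorem \ref{l-bound} gives $\|\mcal P_h^c\mcal L_h^c\| \lesssim 1$, and the inverse bound gives $\|(\mcal P_h^c\mcal L_h^c)^{-1}\| \lesssim 1$.

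The key steps, in order, would be: first, make precise the identification that $\mcal P_h^c$ is the Riesz isomorphism $(S_h'\times \bs U_h', \|\cdot\|_{M^{-1}\times A^{-1}}) \to (S_h \times \bs U_h, \|\cdot\|_{M\times A})$, so that $\|\mcal P_h^c g\|_{M\times A} = \|g\|_{M^{-1}\times A^{-1}}$ for every functional $g$. Second, write $\mcal P_h^c\mcal L_h^c = \mcal P_h^c \circ \mcal L_h^c$ as the composition of the bounded map $\mcal L_h^c \colon (S_h\times\bs U_h, \|\cdot\|_{M\times A}) \to (S_h'\times\bs U_h', \|\cdot\|_{M^{-1}\times A^{-1}})$ followed by the isometry $\mcal P_h^c$, so that $\|\mcal P_h^c\mcal L_h^c\|_{\mathrm L(S_h\times\bs U_h)} = \|\mcal L_h^c\|_{\mathrm L((S_h\times\bs U_h,\,M\times A),\,(S_h'\times\bs U_h',\,M^{-1}\times A^{-1}))}$, which is bounded by Theorem \ref{l-bound}. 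Third, observe that $(\mcal P_h^c\mcal L_h^c)^{-1} = (\mcal L_h^c)^{-1}(\mcal P_h^c)^{-1}$, where $(\mcal P_h^c)^{-1}$ is again an isometry (now in the reverse direction), so the inverse norm equals $\|(\mcal L_h^c)^{-1}\|$ in the corresponding dual-to-primal norms, bounded by the second half of Theorem \ref{l-bound}. Fourth, repeat verbatim for $\mcal P_h^d$ and $\mcal L_h^d$, where the identical structure holds with $M_e$ in place of $M_v$ and $A_h^d$ in place of $A_h^c$.

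The only genuine obstacle is bookkeeping: one must keep careful track of which space each operator acts on and between which norms, so that the composition of $\mcal L_h^c$ with the Riesz map $\mcal P_h^c$ really produces an endomorphism of $S_h\times\bs U_h$ whose operator norm coincides with the dual-norm operator norm of $\mcal L_h^c$. The mathematical content — the $h$-independent inf-sup and boundedness estimates — has already been done in Theorem \ref{l-bound} and Lemma \ref{lm:BAB}; here one merely transports those bounds through the isometry property of the Riesz map. I would therefore present the argument as a short citation of \cite{MardalWinther2004} together with the explicit two-line computation $\|\mcal P_h^c\mcal L_h^c\| = \|\mcal L_h^c\|_{\mathrm L(M\times A,\,M^{-1}\times A^{-1})}$ and its inverse analogue, and conclude that since both factors are uniformly bounded the condition number $\kappa(\mcal P_h^c\mcal L_h^c) = \|\mcal P_h^c\mcal L_h^c\|\,\|(\mcal P_h^c\mcal L_h^c)^{-1}\|$ is $\mathcal O(1)$, independent of $h$, and likewise for the $\bs H_0(\div)$ case.
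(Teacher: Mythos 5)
Your proposal is correct and follows essentially the same route as the paper: the paper explicitly states that, following the framework of Mardal and Winther, the theorem reduces to the boundedness of $\mcal L_{h}^{c}$, $\mcal L_{h}^{d}$ and their inverses in the $(\|\cdot\|_{M},\|\cdot\|_{A})$ norms, which is exactly Theorem \ref{l-bound}, with $\mcal P_{h}^{c}$ and $\mcal P_{h}^{d}$ playing the role of the Riesz maps for the $L^{2}\times A$ inner products. Your explicit isometry bookkeeping, $\|\mcal P_{h}^{c}\mcal L_{h}^{c}\| = \|\mcal L_{h}^{c}\|_{{\rm L}(M\times A,\, M^{-1}\times A^{-1})}$ and its inverse analogue, is just a spelled-out version of the step the paper leaves implicit.
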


\subsection{Mass Lumping}
The inverse of the mass matrices $M^{-1}_v$  and $M_{e}^{-1}$ are in general dense. To be practical, the exact Schur complement can be replaced by an approximation
\begin{align}
\label{masslumping-c}
\tilde A_{h}^{c} = B^T\tilde M_{v}^{-1}B + C^TM_fC,\\
\label{masslumping-d}
\tilde A_{h}^{d} = C\tilde M_{e}^{-1}C^{T} + B^T M_tB,
\end{align}
with $\tilde M_v$ and $\tilde M_{e}$ easy-to-invert matrices, e.g., diagonal or mass lumping of $M_v$ and $M_{e}$, respectively. In this way, we actually change the $L^{2}$-inner product into a discrete $L^{2}$ inner product. We then define the adjoint operators with respect to the discrete $L^{2}$-inner product. For example, we define $\widetilde{\div}_{h} \bs w_h \in S_h$, s.t.,
\begin{equation}\label{weakgradient}
\langle\widetilde{\div}_{h} \bs w_h, v_h\rangle_h : = - ( \bs w_h, \grad v_h)\quad \text{ for all } v_h\in S_h,
\end{equation}
where $\langle\cdot,\cdot\rangle_h$ is the discrete $L^2$-inner product defined by $\tilde M_v$.


The operator and matrix formulations of the vector Laplacian $\widetilde{\mcal L_{h}^{c}} :  S_{h}\times \bs U_{h} \rightarrow S_{h}'\times \bs U_{h}'$
\begin{equation}\label{matvecLap_massLumping}
\widetilde {\mcal L_{h}^{c}}
\begin{pmatrix}
\sigma_h\\
\bs u_h
\end{pmatrix}
:=
\begin{pmatrix}
-\tilde M_v & B\\
B^T & C^TM_fC
\end{pmatrix}
\begin{pmatrix}
\sigma_h\\
\bs u_h
\end{pmatrix}
=
\begin{pmatrix}
0\\
\bs f
\end{pmatrix}.
\end{equation}
And $\widetilde{\mcal L_{h}^{d}} : \bs U_{h}\times \bs V_{h} \rightarrow \bs U_{h}'\times \bs V_{h}'$
\begin{equation}\label{matvecLap_massLumping_div}
\widetilde {\mcal L_{h}^{d}}
\begin{pmatrix}
\bs \sigma_h\\
\bs u_h
\end{pmatrix}
:=
\begin{pmatrix}
-\tilde M_e & C^{T}\\
C & B^TM_tB
\end{pmatrix}
\begin{pmatrix}
\bs \sigma_h\\
\bs u_h
\end{pmatrix}
=
\begin{pmatrix}
0\\
\bs f
\end{pmatrix}.
\end{equation}
The associated diagonal preconditioners are
\begin{equation}\label{approximatediagonal}
\widetilde{\mcal P_{h}^{c}}
=
\begin{pmatrix}
 \tilde M_v^{-1} &0 \\
0& (\tilde A^{c}_{h})^{-1}
\end{pmatrix}
\end{equation}
and
\begin{equation}
\widetilde{\mcal P_{h}^{d}}
=
\begin{pmatrix}
 \tilde M_e^{-1} &0 \\
0& (\tilde A^{d}_{h})^{-1}
\end{pmatrix}.
\end{equation}

It is not hard to see that the modification of the $L^{2}$-inner product will not bring any essential difficulty to the proof of the previous results. We can easily reproduce all the results that we have proved in the previous sections with the help of the following proposition whose proof can be found in~\cite{Chen2014a}.

\begin{proposition}\label{lem:norm_equal}
Assume that the discrete $L^{2}$ norm is equivalent to the $L^{2}$ norm. Then the norm $\|\cdot\|_{ \widetilde{A}^{c}_{h}}$ is equivalent to $\|\cdot\|_{A^{c}_{h}}$, and $\|\cdot\|_{ \widetilde{A}^{d}_{h}}$ is equivalent to $\|\cdot\|_{A^{d}_{h}}$ i.e.,
\begin{align}
\|\bs u\|_{ \widetilde{A}^{c}_{h}}\lesssim \|\bs u\|_{A^{c}_{h}} \lesssim \|\bs u\|_{ \widetilde{A}^{c}_{h}}\quad \hbox{for all} ~\bs u \in \bs U_{h};\\
\|\bs u\|_{ \widetilde{A}^{d}_{h}}\lesssim \|\bs u\|_{A^{d}_{h}} \lesssim \|\bs u\|_{ \widetilde{A}^{d}_{h}}\quad \hbox{for all} ~\bs u \in \bs V_{h}.
\end{align}
\end{proposition}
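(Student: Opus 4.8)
The plan is to isolate the one term in which the two operators differ and reduce the entire statement to the spectral equivalence of the vertex (resp.\ edge) mass matrix and its mass-lumped surrogate. I treat the $\bs H_0(\curl)$ case; the $\bs H_0(\div)$ case is identical after replacing $M_v$ by $M_e$, letting the factor $B$ be played by $C^T$ (so that the lumped term reads $C\tilde M_e^{-1}C^T$), and noting that the common, unlumped term is then $B^TM_tB$ rather than $C^TM_fC$.

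First I would write both energies in matrix form. By \eqref{Ahc} and \eqref{masslumping-c},
\begin{align*}
\|\bs u\|_{A^c_h}^2 &= \langle B^TM_v^{-1}B\bs u,\bs u\rangle + \langle C^TM_fC\bs u,\bs u\rangle = \|B\bs u\|_{M_v^{-1}}^2 + \|C\bs u\|_{M_f}^2,\\
\|\bs u\|_{\tilde A^c_h}^2 &= \langle B^T\tilde M_v^{-1}B\bs u,\bs u\rangle + \langle C^TM_fC\bs u,\bs u\rangle = \|B\bs u\|_{\tilde M_v^{-1}}^2 + \|C\bs u\|_{M_f}^2.
\end{align*}
Since only the vertex mass matrix is lumped, the curl contribution $\|C\bs u\|_{M_f}^2$ is common to both norms and drops out of the comparison. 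It therefore suffices to prove $\|g\|_{\tilde M_v^{-1}}\eqsim\|g\|_{M_v^{-1}}$ for every $g\in S_h'$ and then specialize to $g=B\bs u$.

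The hypothesis that the discrete $L^2$ norm is equivalent to the $L^2$ norm is precisely the spectral equivalence $c_0\langle M_v\phi,\phi\rangle\le\langle\tilde M_v\phi,\phi\rangle\le c_1\langle M_v\phi,\phi\rangle$ for all $\phi\in S_h$, with $c_0,c_1$ independent of $h$. As $M_v$ and $\tilde M_v$ are symmetric positive definite, this ordering $c_0M_v\le\tilde M_v\le c_1M_v$ reverses under inversion to give $c_1^{-1}M_v^{-1}\le\tilde M_v^{-1}\le c_0^{-1}M_v^{-1}$, i.e.\ $\|g\|_{\tilde M_v^{-1}}^2\eqsim\|g\|_{M_v^{-1}}^2$ with the same $h$-independent constants. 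Taking $g=B\bs u$ and restoring the common curl term yields the two-sided bound, proving the equivalence.

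The argument presents no genuine obstacle; the only step deserving a word is the monotonicity of inversion on the cone of symmetric positive definite matrices (if $0<X\le Y$ then $Y^{-1}\le X^{-1}$), which is standard and follows, for instance, from simultaneous diagonalization of the pencil $(X,Y)$. The single point to watch is structural rather than analytic: one must verify that lumping touches only the factor sandwiched between $B^T$ and $B$ (resp.\ between $C$ and $C^T$), so that the remaining energy term is genuinely unchanged and cancels in the comparison. Once this is observed, equivalence of the full energy norms is immediate and the constants are inherited verbatim from the mass-matrix equivalence.
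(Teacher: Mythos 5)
Your proof is correct. Note that for this proposition the paper does not supply an argument at all: it defers the proof to \cite{Chen2014a}, so there is no in-text proof to compare against. Your reduction is the natural, self-contained one: since lumping touches only the factor sandwiched between $B^T$ and $B$ (resp.\ between $C$ and $C^T$), the terms $\|C\bs u\|_{M_f}^2$ (resp.\ $\|B\bs u\|_{M_t}^2$) cancel in the comparison, and the statement collapses to $\|g\|_{\tilde M_v^{-1}} \eqsim \|g\|_{M_v^{-1}}$, which follows from the uniform spectral equivalence $c_0 M_v \le \tilde M_v \le c_1 M_v$ together with the order-reversal of inversion on symmetric positive definite matrices. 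Two points you handle that deserve to be stated and are: the hypothesis ``the discrete $L^2$ norm is equivalent to the $L^2$ norm'' must be read with constants $c_0, c_1$ independent of $h$ (otherwise the conclusion's uniformity in $h$ would not follow), and the inherited constants for the full energy norms are $\min\{1, c_1^{-1}\}$ and $\max\{1, c_0^{-1}\}$ after the common term is restored. Equivalently, your algebraic identity $\|B\bs u\|_{\tilde M_v^{-1}} = \|\widetilde{\div}_h \bs u\|_h$ shows the argument is exactly the statement that the lumped weak divergence controls and is controlled by the exact one, which is the form in which \cite{Chen2014a} phrases it; nothing is missing.
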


\subsection{Triangular Preconditioner}
When a diagonal mass matrix is used, we can make use of the block decomposition
\begin{equation}
\begin{pmatrix}
 -\tilde M_v & B\\
 B^T & C^TM_fC
\end{pmatrix}
\begin{pmatrix}
 I   &  \tilde M_v^{-1} B \\
0  &   I
\end{pmatrix}
=
\begin{pmatrix}
 -\tilde M_v    &   0  \\
B^T   &  \tilde A_h^c
\end{pmatrix}
\end{equation}
to obtain a triangular preconditioner.

\begin{definition}
We define the operator $\mathcal G_h^c: S_h'\times \bs U_h' \rightarrow  S_h \times \bs U_h$
\begin{equation}\label{eq:Matrix_Pre_DGS_curl}
\mathcal G_h^c = \begin{pmatrix}
I   &  \tilde M_v^{-1} B \\
0  &   I
\end{pmatrix}
\begin{pmatrix}
-\tilde M_v    &   0  \\
B^T   &  \tilde A_h^c
\end{pmatrix}
^{-1},
\end{equation}
and the operator $\mathcal G_h^d: \bs U_h' \times \bs V_h' \rightarrow  \bs U_h \times \bs V_h$
\begin{equation} \label{eq:Matrix_Pre_DGS_div}
\mathcal G_h^d = \begin{pmatrix} I   &  \tilde M_e^{-1} C^T \\ 0  &   I  \end{pmatrix}\begin{pmatrix}  -\tilde M_e    &   0  \\  C^T   &  \tilde A_h^d   \end{pmatrix}^{-1}.
\end{equation}
\end{definition}

From the definition, it is trivial to verify that $\mathcal G_h^c= \widetilde{\mcal L_{h}^{c}}^{-1}$ and $\mathcal G_h^d = \widetilde{\mcal L_{h}^{c}}^{-1}$ and thus conclude that the proposed triangular preconditioners are uniform.

\begin{theorem}
Assume $\tilde M$ is spectrally equivalent to $M$. Then the $\mcal G_{h}^{c}$ and $\mcal G_{h}^{d}$ are uniform preconditioners for $\mcal L_{h}^{c}$ and $\mcal L_{h}^{d}$, respectively, i.e., the corresponding operator norms
\begin{align*}
\|\mcal G_{h}^{c}\mcal L_{h}^{c}\|_{{\rm L}( S_{h}\times \bs U_{h} , S_{h}\times \bs U_{h})} , \|(\mcal L_{h}^{c}\mcal G_{h}^{c})^{-1}\|_{{\rm L}( S_{h}\times \bs U_{h} , S_{h}\times \bs U_{h})},\\
\|\mcal G_{h}^{d}\mcal L_{h}^{d}\|_{{\rm L}( \bs U_{h}\times \bs V_{h}, \bs U_{h}\times \bs V_{h})},
\|(\mcal G_{h}^{d}\mcal L_{h}^{d})^{-1}\|_{{\rm L}( \bs U_{h}\times \bs V_{h}, \bs U_{h}\times \bs V_{h})}
\end{align*}
are bounded and independent with parameter $h$.
\end{theorem}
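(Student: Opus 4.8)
The plan is to exploit the block factorization recorded just before the statement, which shows that the triangular preconditioner is precisely the inverse of the mass-lumped vector Laplacian: $\mcal G_h^c = \widetilde{\mcal L_h^c}^{-1}$ and $\mcal G_h^d = \widetilde{\mcal L_h^d}^{-1}$. The left-preconditioned operator is therefore $\mcal G_h^c \mcal L_h^c = \widetilde{\mcal L_h^c}^{-1}\mcal L_h^c$, and the whole statement reduces to the spectral equivalence between the mass-lumped operator $\widetilde{\mcal L_h^c}$ and the exact operator $\mcal L_h^c$. No new analysis is needed beyond a careful tracking of the four norms $\|\cdot\|_M$, $\|\cdot\|_A$, $\|\cdot\|_{M^{-1}}$, $\|\cdot\|_{A^{-1}}$.

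First I would establish the mass-lumped analogue of Theorem \ref{l-bound}: that $\widetilde{\mcal L_h^c}$ and $\widetilde{\mcal L_h^c}^{-1}$ are bounded with constants independent of $h$. The proof of Theorem \ref{l-bound} uses only the algebraic structure $\tilde A_h^c = B^T\tilde M_v^{-1}B + C^TM_fC$ and the resulting inequality $\|B\bs u_h\|_{\tilde M^{-1}}\le\|\bs u_h\|_{\tilde A}$ (the mass-lumped form of Lemma \ref{lm:BAB}), so it carries over verbatim with $M,A$ replaced by $\tilde M,\tilde A$. The hypothesis $\tilde M\eqsim M$ gives $\|\cdot\|_{\tilde M}\eqsim\|\cdot\|_M$ and $\|\cdot\|_{\tilde M^{-1}}\eqsim\|\cdot\|_{M^{-1}}$, while Proposition \ref{lem:norm_equal} supplies $\|\cdot\|_{\tilde A_h^c}\eqsim\|\cdot\|_{A_h^c}$ and, by duality, the equivalence of the corresponding inverse norms. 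Hence $\widetilde{\mcal L_h^c}$ maps $(\|\cdot\|_M,\|\cdot\|_A)$ into $(\|\cdot\|_{M^{-1}},\|\cdot\|_{A^{-1}})$ and $\mcal G_h^c=\widetilde{\mcal L_h^c}^{-1}$ maps $(\|\cdot\|_{M^{-1}},\|\cdot\|_{A^{-1}})$ back into $(\|\cdot\|_M,\|\cdot\|_A)$, both with $h$-independent norm.

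With these two boundedness statements the conclusion follows by composition. The operator $\mcal G_h^c\mcal L_h^c=\widetilde{\mcal L_h^c}^{-1}\mcal L_h^c$ factors as $\mcal L_h^c$ (bounded from $(\|\cdot\|_M,\|\cdot\|_A)$ into the dual norms, by Theorem \ref{l-bound}) followed by $\widetilde{\mcal L_h^c}^{-1}$ (bounded from the dual norms back, by the previous step), so $\|\mcal G_h^c\mcal L_h^c\|_{{\rm L}(S_h\times\bs U_h,\,S_h\times\bs U_h)}\lesssim1$. For the inverse, $(\mcal L_h^c\mcal G_h^c)^{-1}=\widetilde{\mcal L_h^c}(\mcal L_h^c)^{-1}$ factors as $(\mcal L_h^c)^{-1}$ (bounded from the dual norms into $(\|\cdot\|_M,\|\cdot\|_A)$, by Theorem \ref{l-bound}) followed by $\widetilde{\mcal L_h^c}$, giving the second uniform bound. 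The $\bs H_0(\div)$ case is identical, using $\mcal G_h^d=\widetilde{\mcal L_h^d}^{-1}$, the version of Lemma \ref{lm:BAB} with $C(\tilde A_h^d)^{-1}C^T$, and the second equivalence in Proposition \ref{lem:norm_equal}.

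The main obstacle here is conceptual and is already dispatched by the factorization: once one recognizes $\mcal G_h^c=\widetilde{\mcal L_h^c}^{-1}$, the theorem is immediate modulo uniform norm equivalences. The one point genuinely requiring care is checking that the spectral equivalence of the mass matrices propagates \emph{uniformly in $h$} through every norm in Theorem \ref{l-bound} — in particular through the $A$-norm and its dual, not merely through the $M$-norm — and this is exactly the content of Proposition \ref{lem:norm_equal}, so no regularity or approximation input beyond what is already established enters.
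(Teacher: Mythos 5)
Your proposal is correct and follows essentially the same route as the paper: the paper's own argument is precisely to verify from the block factorization that $\mcal G_h^c=\widetilde{\mcal L_h^c}^{-1}$ (and $\mcal G_h^d=\widetilde{\mcal L_h^d}^{-1}$), and to conclude uniformity from the mass-lumped version of the stability theorem together with the norm equivalences supplied by the spectral equivalence $\tilde M\eqsim M$ and Proposition \ref{lem:norm_equal}. You merely make explicit the composition bookkeeping through the four norms that the paper dismisses as ``trivial to verify,'' which is a faithful expansion rather than a different proof.
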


In both diagonal and triangular preconditioners, to be practical, we do not compute $A^{-1}$ or $\tilde A^{-1}$. Instead we apply one and only one V-cycle multigrid for $\tilde{A}^{-1}$.

\subsection{Maxwell Equations with Divergence-Free Constraint}
We consider a prototype of Maxwell equations with divergence-free constraint
$$
\curl \curl \bs u = \bs f, \; \div \bs u = 0, \; \text{ in } \Omega, \qquad \bs u\times \bs n = 0  \text{ on } \partial \Omega.
$$
The solution $\bs u$ is approximated in the edge element space $\bs U_{h}$. The divergence-free constraint can then be understood in the weak sense, i.e., $\div_h \bs u = 0$. By introducing a Lagrangian multiplier $p\in S_{h}$, the matrix form is
\begin{equation}\label{maxwellmat}
\begin{pmatrix}
C^TM_{f}C & B^T\\
B & O
\end{pmatrix}
\begin{pmatrix}
\bs u\\
p
\end{pmatrix}
=
\begin{pmatrix}
\bs f\\
g
\end{pmatrix}.
\end{equation}
We can apply the augmented Lagrangian method~\cite{Fortin.Michel;Glowinski.Roland1983}, by adding $B^TM_{v}^{-1}B$ to the first equation, to get an equivalent matrix equation
\begin{equation}\label{maxwell}
\begin{pmatrix}
A & B^T\\
B & O
\end{pmatrix}
\begin{pmatrix}
\bs u\\
p
\end{pmatrix}
=
\begin{pmatrix}
\bs f + B^TM_{v}^{-1} g\\
g
\end{pmatrix}.
\end{equation}
Now the $(1,1)$ block $A=C^TM_{f}C + B^TM_{v}^{-1}B$ in \eqref{maxwell} is a discrete vector Laplacian and the whole system \eqref{maxwell} is in Stokes type.

We can thus use the following diagonal preconditioner.
\begin{theorem}
The following block-diagonal matrix
\begin{equation}\label{diagMax}
\begin{pmatrix}
A^{-1} &0 \\
0&  M_v^{-1}
\end{pmatrix}
\end{equation}
is a uniform preconditioner for the regularized Maxwell operator
$\begin{pmatrix}
A & B^T\\
B & O
\end{pmatrix}.$
\end{theorem}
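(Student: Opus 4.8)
The plan is to recognize \eqref{maxwell} as a symmetric saddle point system of Brezzi (Stokes) type and to apply the operator-preconditioning framework of \cite{MardalWinther2004}, in the same spirit as Theorem \ref{l-bound}. I would equip the product space $X = \bs U_h\times S_h$ with the norm $\|(\bs u, p)\|_X^2 := \|\bs u\|_A^2 + \|p\|_{M_v}^2$ induced by the inverse of \eqref{diagMax}, so that \eqref{diagMax} is exactly the Riesz map of $X$. By the framework of \cite{MardalWinther2004} it then suffices to show that the saddle point operator is an isomorphism from $X$ onto $X'$ whose norm and the norm of its inverse are bounded independently of $h$; this reduces to verifying the Brezzi conditions for the forms $a(\bs u,\bs v) = \langle A\bs u,\bs v\rangle$ and $b(\bs v, p) = \langle B\bs v, p\rangle$.

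Two of the Brezzi conditions are immediate. Continuity and coercivity of $a$ hold on all of $\bs U_h$ with constant one, since $a(\bs u,\bs u) = \|\bs u\|_A^2$; this is precisely where the augmentation matters, as it replaces the singular curl-curl block by the positive definite vector Laplacian $A = C^TM_fC + B^TM_v^{-1}B$ of \eqref{Ahc}. The continuity of $b$, namely $b(\bs v, p)\le \|\bs v\|_A\|p\|_{M_v}$, is just a restatement of Lemma \ref{lm:BAB}: since $\sup_{\bs v}\, b(\bs v,p)/\|\bs v\|_A = \|B^T p\|_{A^{-1}}$, the bound $\langle B A^{-1} B^T p, p\rangle \le \langle M_v p, p\rangle$ gives $\|B^T p\|_{A^{-1}}\le \|p\|_{M_v}$.

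The one substantive ingredient, and the step I expect to be the crux, is the inf-sup (LBB) condition; here the discrete de Rham structure makes it transparent. Given $p\in S_h$, I would use $\grad S_h\subset \bs U_h$ and solve the symmetric positive definite discrete Poisson problem for $w\in S_h$ with $\div_{h}\grad w = -p$, then set $\bs v = \grad w$. Because $\curl\grad w = 0$, one gets $\|\bs v\|_A^2 = \|\curl\bs v\|^2 + \|\div_{h}\bs v\|^2 = \|\div_{h}\grad w\|^2 = \|p\|^2 = \|p\|_{M_v}^2$, while the definition of the weak divergence gives $b(\bs v,p) = (\bs v,\grad p) = (\grad w,\grad p) = -(\div_{h}\grad w, p) = \|p\|^2$. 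Hence $\sup_{\bs v} b(\bs v,p)/\|\bs v\|_A \ge \|p\|^2/\|p\| = \|p\|_{M_v}$, so the inf-sup constant is $\beta = 1$. Combined with the continuity bound this shows the even stronger identity $B A^{-1} B^T = M_v$, i.e. $M_v^{-1}$ is the exact Schur-complement preconditioner, which is why all constants come out equal to one.

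With every Brezzi constant bounded uniformly in $h$ (indeed all equal to one), the framework of \cite{MardalWinther2004} guarantees that the saddle point operator and its inverse are uniformly bounded between $X$ and $X'$, hence the operator \eqref{diagMax} preconditions \eqref{maxwell} with a mesh-independent condition number. Finally, replacing $A^{-1}$ by one multigrid V-cycle from Section 3 and $M_v^{-1}$ by $\tilde M_v^{-1}$ only perturbs the spectrum within fixed bounds, by the spectral equivalence in Proposition \ref{lem:norm_equal}, so the conclusion is unaffected.
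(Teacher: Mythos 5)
Your proof is correct, and at the crucial step it takes a genuinely different route from the paper. Both proofs reduce the claim to the spectral equivalence of the Schur complement $S = BA^{-1}B^T$ with $M_v$ (your Brezzi-conditions packaging in the Mardal--Winther framework and the paper's direct Schur-complement argument are equivalent formulations, and both use Lemma \ref{lm:BAB} for the upper bound). For the inf-sup lower bound, however, the paper works at the continuous level: given $p_h\in S_h$ it solves $\Delta \phi = p_h$ on $\Omega$, sets $\bs v = \grad\phi$, projects $\bs v_h = Q_h \bs v$ onto $\bs U_h$, verifies $\div_h \bs v_h = p_h$, and bounds $\|\curl \bs v_h\|\lesssim \|p_h\|$ via a piecewise-constant approximation, an inverse inequality, and the $H^2$ regularity of the convex domain, obtaining an unspecified constant $\beta$. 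You instead stay entirely discrete: since $\grad S_h\subset \bs U_h$ by the discrete de Rham complex \eqref{exact}, the discrete Poisson solve $(\grad w, \grad q) = (p,q)$ for all $q\in S_h$ gives $\div_{h}\grad w = -p$, and the test function $\bs v = \grad w$ satisfies $\curl \bs v = 0$ exactly, so $\|\bs v\|_A = \|p\|$ and $b(\bs v,p) = \|p\|^2$, yielding $\beta = 1$; combined with Lemma \ref{lm:BAB} and polarization this even gives the exact identity $BA^{-1}B^T = M_v$, which one can confirm directly since the discrete Hodge decomposition block-diagonalizes $A$ and the functional $B^Tp$ annihilates $K_h^c$, so that $A^{-1}B^Tp = \grad w$. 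What your route buys is sharp constants and, more importantly, the removal of the convexity/full-regularity hypothesis from this particular theorem (regularity is of course still needed elsewhere in the paper, e.g.\ for the multigrid approximation property); what the paper's continuous construction buys is a technique that does not depend on the constraint being compatible with a discrete subcomplex, but since that structure is available here, your argument is the shorter and stronger one. Your closing remark on replacing $A^{-1}$ by one V-cycle and $M_v^{-1}$ by $\tilde M_v^{-1}$ goes beyond the stated theorem (which concerns the exact preconditioner \eqref{diagMax}), but it is consistent with the paper's practice, with the caveat that the V-cycle equivalence rests on the multigrid theorem of Section 3 rather than on Proposition \ref{lem:norm_equal} alone.
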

\begin{proof}
 It suffices to prove that the Schur complement $S = BA^{-1}B^T$ is spectral equivalent to $M_v$. The inequality $(Sp,p) \leq (M_vp,p)$ for all $p\in S_h$ has been proved in Lemma \ref{lm:BAB}. To prove the inequality in the other way, it suffices to prove the inf-sup condition: there exists a constant $\beta$ independent of $h$ such that
 \begin{equation}\label{inf-sup}
\inf_{p_h\in S_h}\sup_{v_h\in U_h}\frac{\langle Bv_h, p_h\rangle }{\|v_h\|_A\|q_h\|} = \beta >0.
\end{equation}

Given $p_h \in S_h$, we solve the Poisson equation $\Delta \phi = p_h$ with homogenous Dirichlet boundary condition and let $\bs v = \grad \phi$. Then $\bs v\in \bs H_0(\curl)$ and $\div \bs v = p_h$ holds in $L^2$. We define $\bs v_h = Q_h \bs v$ where $Q_h: \bs H_0(\curl) \to \bs U_h$ is the $L^2$ projection. Then $(\div_h \bs v_h, q_h) = (\bs v_h, \grad q_h) = (\bs v, \grad q_h) = -(\div \bs v, q_h) = (p_h, q_h)$, i.e., $\div_h \bs v_h = p_h$. To control the norm of $\curl \bs v_h$, we denote $\bs v_0$ as the piecewise constant projection of $\bs v$. Then
$$
\|\curl \bs v_h\| = \|\curl (\bs v_h - \bs v_0)\|\lesssim h^{-1}\|\bs v_h - \bs v_0\| \leq \|\bs v\|_1\lesssim \|p_h\|.
$$
In the last step, we have used the $H^2$-regularity result.

In summary, given  $p_h \in S_h$, we have found a $\bs v_h \in U_h$ such that $\langle B\bs v_h, p_h\rangle = \|p_h\|^2$ while $\|\bs v_h\|_A^2 = \|\div_h \bs v_h\|^2 + \|\curl \bs v_h\|^2 \lesssim \|p_h\|^2$. Therefore the inf-sup condition \eqref{inf-sup} has been proved which implies the inequality $\langle Sp,p\rangle \geq \beta ^{2}\langle M_vp,p\rangle$.
\end{proof}
%

To design an efficient triangular preconditioner for~\eqref{maxwell}, we explore the commutator
\begin{equation}\label{AGGA}
AG = {\tilde G}A_p,
\end{equation}
where $G = M_e^{-1}B^T$ is the matrix representation of the gradient operator $S_{h}\to U_{h}$, $\hat G = B^TM_v^{-1}$ is another scaled gradient operator, and $A_p = BG$ represents the discrete Laplacian operator $S_{h}\to S_{h}$. The identity~\eqref{AGGA} is a discrete version of the following identity
\begin{equation}\label{key1}
\bs\Delta \grad = \grad \Delta,
\end{equation}
where the first $\bs \Delta$ is the vector Laplacian operator and the second $\Delta$ is the scalar Laplacian, and can be verified by noticing that $CG = \curl \grad = 0$.

\begin{figure}[hpt]
\label{fig:2D}
\subfigure[A mesh for the unit square]{
\begin{minipage}[t]{0.32\linewidth}
\centering
\includegraphics*[width=3cm]{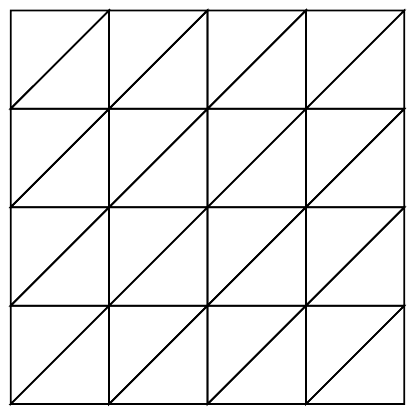}
\end{minipage}}
\subfigure[A mesh for a L-shape domain]{
\begin{minipage}[t]{0.32\linewidth}
\centering
\includegraphics*[width=3cm]{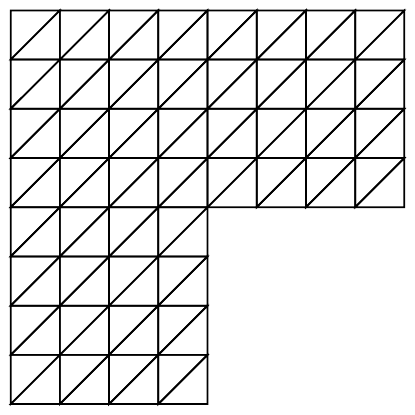}
\end{minipage}}
\subfigure[A mesh for a crack domain]{
\begin{minipage}[t]{0.35\linewidth}
\centering
\includegraphics*[width=3.25cm]{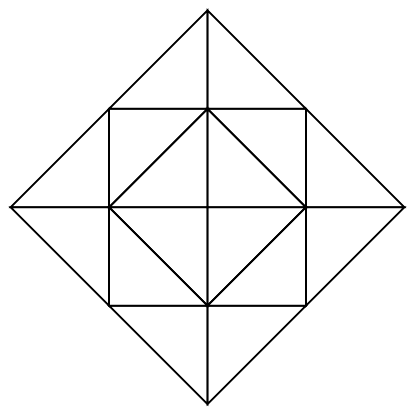}
\end{minipage}}
\caption{Meshes for Example 5.1}
\end{figure}

\begin{table}[hpt]
\footnotesize
\begin{center}
\caption{Iteration steps and CPU time of the diagonal and the triangular preconditioners for the vector Laplace equation in $\bs H_0(\curl)$ space: the square domain $(0,1)^2$.}\label{EX1-1}
\begin{tabular}{c|c|c|c|c|c}
\hline
\hline  $h$ &Dof  & Iteration (D)  & Time & Iteration (T) & Time\\
\hline
1/32           &   4,225        & 28      & 0.20 s    & 13   & 0.18s\\ \hline
1/64           &  16,641       & 28      & 0.68 s    & 14   & 0.34s \\ \hline
1/128         &    66,049     & 27      & 1.90 s      & 14   & 1.30s \\ \hline
1/256         &   263,169    & 27      & 8.80 s      & 14   & 6.80s \\ \hline
\hline
\end{tabular}
\end{center}
\end{table}

\begin{table}[hpt]
\footnotesize
\begin{center}
\caption{Iteration steps and CPU time of the diagonal and the triangular preconditioners for the lowest order discretization of the vector Laplace equation in $\bs H_0(\curl)$ space: the L-shape domain $(-1,1)^2\backslash \left \{ [0,1]\times [-1,0]\right \}$.}
\label{EX1-2}
\begin{tabular}{c|c|c|c|c|c}
\hline
\hline  $h$ &Dof  & Iteration (D)  & Time & Iteration (T) & Time\\
\hline
1/32         &   3,201       & 33      & 0.24 s  & 15  & 0.19s   \\ \hline
1/64         &  12,545      & 35      & 0.63 s  & 16  & 0.40s  \\ \hline
1/128       &    49,665    & 39      & 2.50  s   & 16  & 1.90s  \\ \hline
1/256       &   197,633   & 41      & 7.20 s    & 16  & 5.50s\\ \hline
\hline
\end{tabular}
\end{center}
\end{table}

\begin{table}[hpt]
\footnotesize
\begin{center}
\caption{Iteration steps and CPU time of the diagonal and the triangular  preconditioners for the lowest order discretization of the vector Laplace equation in $\bs H_0(\curl)$ space: the crack domain $\{|x|+|y|<1\}\backslash \{0\leq x\leq 1, y=0\}$.}
\label{EX1-3}
\begin{tabular}{c|c|c|c|c|c}
\hline
\hline  $h$ &Dof  & Iteration (D)  & Time & Iteration (T) & Time\\
\hline
1/16       &   2,145       & 34      & 0.13 s  & 15  & 0.08 s   \\ \hline
1/32       &  8,385        & 38      & 0.54 s  & 15  & 0.30 s  \\ \hline
1/64       &    33,153    & 41      & 1.60  s   & 16  & 1.00 s  \\ \hline
1/128     &   131,841   & 44      & 6.70 s    & 16  &  3.60 s\\ \hline
\hline
\end{tabular}
\end{center}
\end{table}

\begin{figure}[hpt]
\label{fig:}
\subfigure[A mesh for the unit cube]{
\begin{minipage}[t]{0.5\linewidth}
\centering
\includegraphics*[width=4cm]{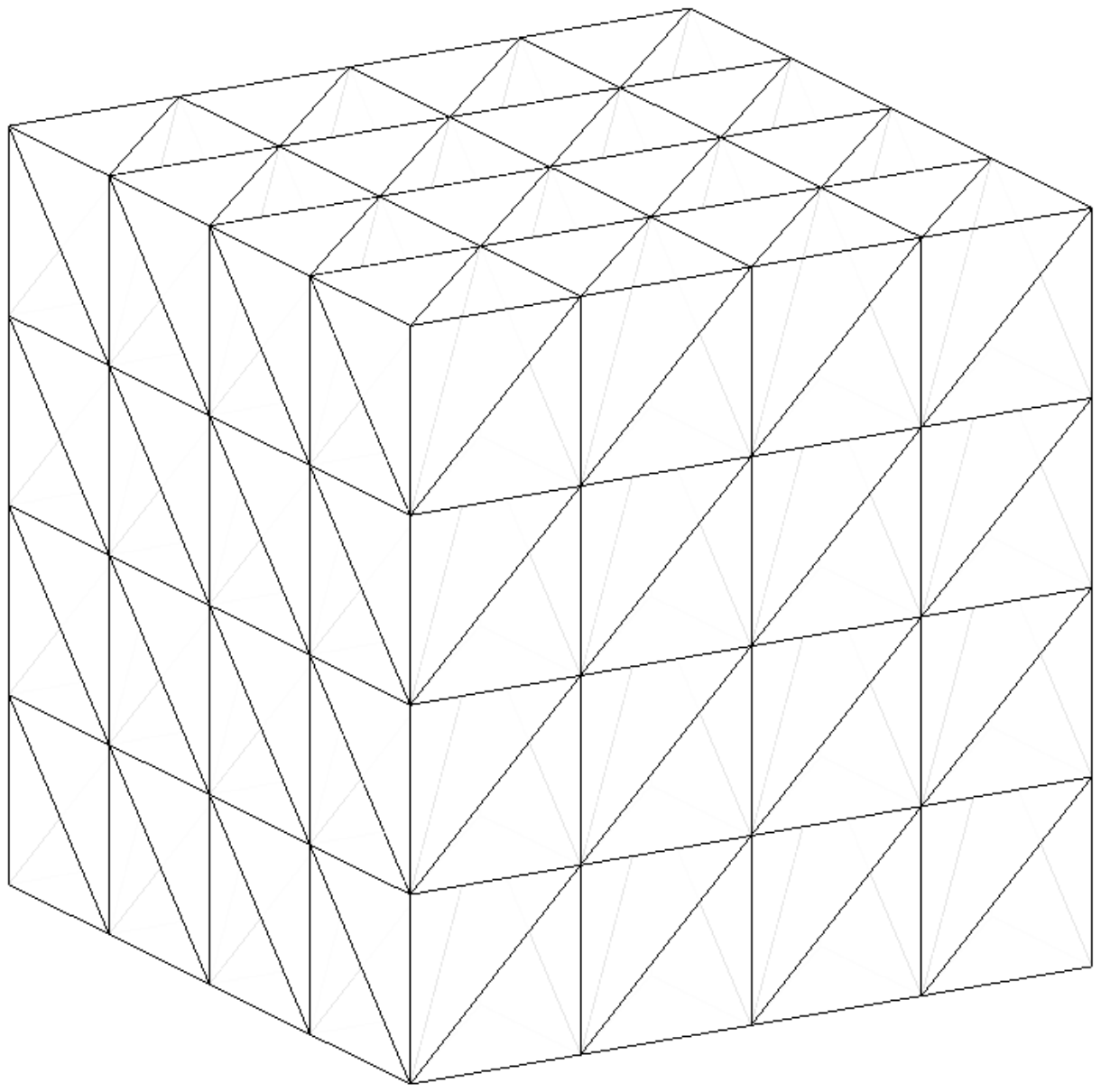}
\end{minipage}}
\subfigure[A mesh for a L-shaped domain]
{\begin{minipage}[t]{0.5\linewidth}
\centering
\includegraphics*[width=3.8cm]{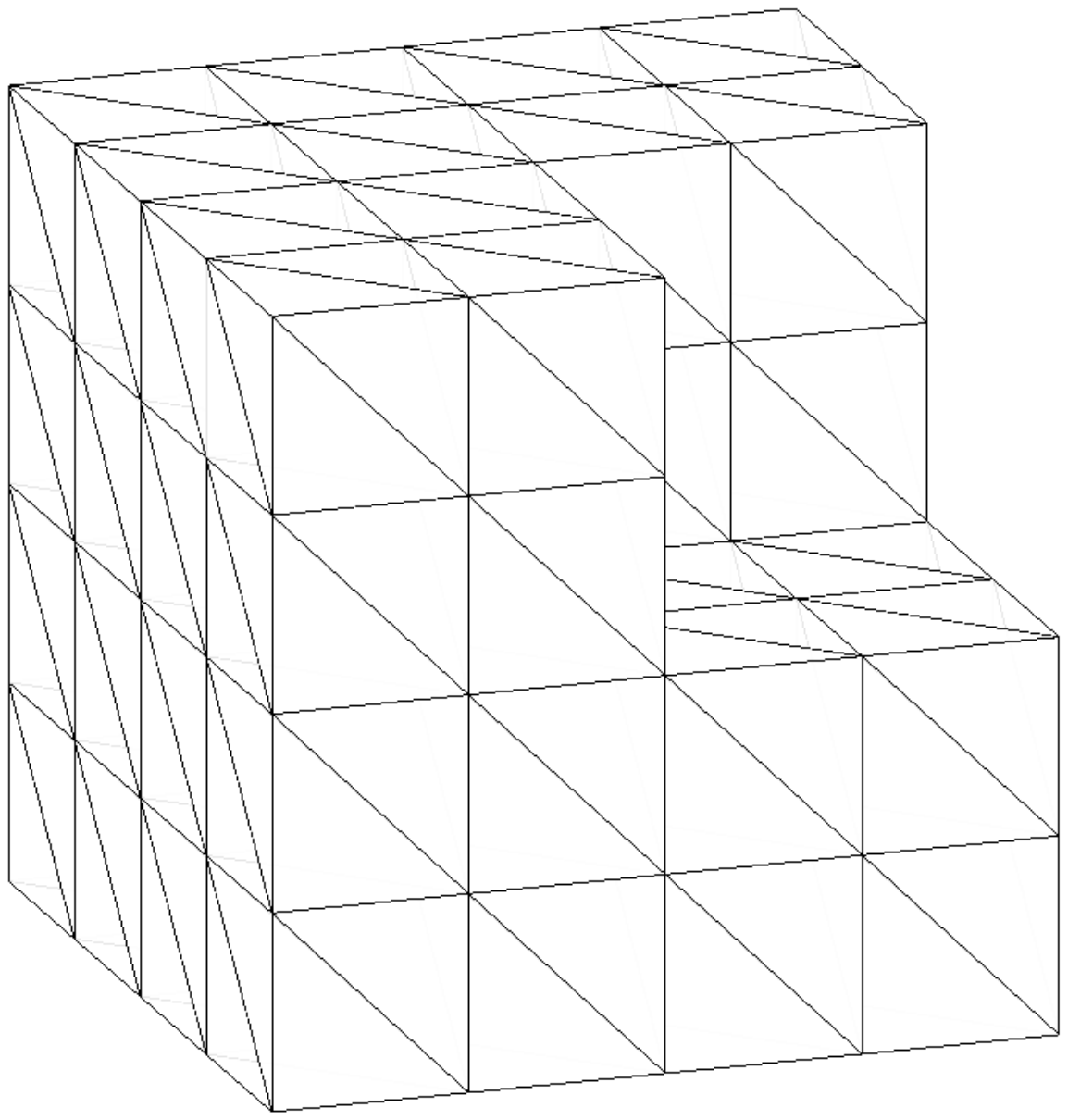}
\end{minipage}}
\caption{Meshes for Example 5.2}
\end{figure}

\begin{table}[hpt]
\footnotesize
\begin{center}
\caption{Iteration steps and CPU time of the diagonal and triangular preconditioners for the lowest order discretization of the vector Laplace equation in $\bs H_0(\curl)$ space in three dimensions: the unit cube domain.}\label{EX2-1}
\begin{tabular}{c|c|c|c|c|c}
\hline
\hline  $h$ &Dof  & Iteration (D)  & Time & Iteration (T) & Time\\
\hline
1/4       &   729       & 21      & 0.25 s  & 12  & 0.15 s   \\ \hline
1/8       &  4,913        & 29      & 0.48 s  & 16  & 0.28 s  \\ \hline
1/16       &    35,937    & 33      & 3.90  s   & 18  & 4.0 s  \\ \hline
1/32     &   274,625   & 33      & 40 s    & 19  &  27 s\\ \hline
\hline
\end{tabular}
\end{center}
\end{table}

\begin{table}[ht]
\footnotesize
\begin{center}
\caption{Iteration steps and CPU time of the diagonal and triangular preconditioners for the lowest order discretization of the vector Laplace equation in $\bs H_0(\curl)$ space in three dimensions: L-shape domain $(-1,1)^3\backslash \left \{(-1,0)\times (0,1)\times (0,1)\right \}$.}\label{EX2-2}
\begin{tabular}{c|c|c|c|c|c}
\hline
\hline  $h$ &Dof  & Iteration (D)  & Time & Iteration (T) & Time\\
\hline
1/2       &   665       & 20      & 0.03 s  & 12  & 0.06 s   \\ \hline
1/4       &  4,401        & 34      & 0.54 s  & 16  & 0.37 s  \\ \hline
1/8       &    31,841    & 42      & 5.50  s   & 20  & 3.60 s  \\ \hline
1/16     &   241,857   & 48      & 48 s    & 23  &  33 s\\ \hline
\hline
\end{tabular}
\end{center}
\end{table}

\begin{table}[hpt]
\footnotesize
\begin{center}
\caption{Iteration steps and CPU time of the diagonal and triangular preconditioners for the lowest order discretization of Maxwell equations in the saddle point form in three dimensions: the unit cube domain.}
\label{EX3-1}
\begin{tabular}{c|c|c|c|c|c}
\hline
\hline  $h$ &Dof  & Iteration (D)  & Time & Iteration (T) & Time\\
\hline
1/4       &   729       & 21      & 0.40 s  & 12  & 0.80 s   \\ \hline
1/8       &  4,913        & 27      & 1.3 s  & 16  & 1.3 s  \\ \hline
1/16       &    35,937    & 31      & 4.30  s   & 18  & 4.8 s  \\ \hline
1/32     &   274,625   & 31      & 40 s    & 19  &  39 s\\ \hline
\hline
\end{tabular}
\end{center}
\end{table}

\begin{table}[hpt]
\footnotesize
\begin{center}
\caption{Iteration steps and CPU time of the diagonal and triangular preconditioners for the lowest order discretization of Maxwell equations in the saddle point form in three dimensions: L-shape domain $(-1,1)^3\backslash \left \{(-1,0)\times (0,1)\times (0,1)\right \}$.}
\label{EX3-2}
\begin{tabular}{c|c|c|c|c|c}
\hline
\hline  $h$ &Dof  & Iteration (D)  & Time & Iteration (T) & Time\\
\hline
1/2       &   665       & 20      & 0.47 s  & 10  & 0.68 s   \\ \hline
1/4       &  4,401        & 28      & 0.58 s  & 14  & 1.10 s  \\ \hline
1/8       &    31,841    & 34      & 5.70  s   & 17  & 4.00 s  \\ \hline
1/16     &   241,857   & 37      & 40 s    & 19  &  38 s\\ \hline
\hline
\end{tabular}
\end{center}
\end{table}

With~\eqref{AGGA}, we have the following block factorization
\begin{equation}\label{maxwellfac}
\begin{pmatrix}
A & B^T\\
B & O
\end{pmatrix}
\begin{pmatrix}
I & G\\
O & -M_v^{-1}A_p
\end{pmatrix}
=
\begin{pmatrix}
A & O\\
B & A_p
\end{pmatrix}.
\end{equation}
When $S_h$ is the linear ($P_1$) element, $M_v^{-1}$ can be approximated accurately by using the mass lumping of the $P_1$ element. Therefore we can easily solve~\eqref{maxwellmat} by inverting two Laplacian operators: one is a vector Laplacian of the edge element and another is a scalar Laplacian for the $P_1$ element. In general $M_v^{-1}$ will be replaced by a sparse approximation $\tilde M_{v}^{-1}$ and~\eqref{maxwellfac} can be used to construct effective block-triangular preconditioners:
\begin{equation}\label{triMax}
\begin{pmatrix}
I & G\\
O & -\tilde M_v^{-1}A_p
\end{pmatrix}
\begin{pmatrix}
\tilde A & O\\
B & A_p
\end{pmatrix}^{-1}.
\end{equation}
Again in practice, $\tilde A^{-1}$ and $A_p^{-1}$ will be replaced by one multigrid V-cycle for the vector Laplacian or scalar Laplacian, respectively.

\section{Numerical Examples}
In this section, we will show  the efficiency  and the robustness of the proposed diagonal and triangular preconditioners. We perform the numerical experiments using the $i$FEM package \cite{Chen.L2008c}.

\begin{example}[Two Dimensional Vector Laplacian using Edge Elements]\rm
We first  consider the mixed system \eqref{matvecLap} arising from the lowest order discretization of the vector Laplace equation in $\bs H_0(\curl)$ space.

We consider three domains in two dimensions: the unit square $(0,1)^2$, the L-shape domain $(-1,1)^2\backslash \left \{ [0,1]\times [-1,0]\right \}$, and the crack domain $\{|x|+|y|<1\}\backslash \{0\leq x\leq 1, y=0\}$; see Fig. \ref{fig:2D}.
\end{example}

We use the diagonal preconditioner \eqref{approximatediagonal} in the MINRES method and the triangular preconditioner \eqref{eq:Matrix_Pre_DGS_curl} in GMRES (with the restart step $20$) to solve \eqref{matvecLap}.
In these preconditioners, one and only one variable V-cycle is used for approximating $\tilde{A}^{-1}$. In the variable V-cycle, we chose $m_J = 2$ and $m_{k} = \lceil 1.5^{J - k}m_J \rceil$ for $k=J,\ldots, 1$. We stop the Krylov space iteration when the relative residual is less than or equal to $10^{-8}$. Iteration steps and CPU time are summarized in Table \ref{EX1-1}, \ref{EX1-2}, and \ref{EX1-3}.

\begin{example}[Three Dimensional Vector Laplacian using Edge Elements]\rm
We then consider the three dimensional case. Still consider the lowest order discretization of the vector Laplace equation in $\bs H_0(\curl)$ space. We use almost the same setting except $m_J = 3$ for which the performance is more robust.

We consider two domains. One is the unit cube $(0,1)^3$ for which the full regularity assumption holds and another is a L-shape domain $(-1,1)^3\backslash \left \{(-1,0)\times (0,1)\times (0,1)\right \}$ which violates the full regularity assumption. Iteration steps and CPU time are summarized in Table \ref{EX2-1} and \ref{EX2-2}.
\end{example}

Based on these tables, we present some discussion on our preconditioners.
\begin{enumerate}
 \item Both diagonal and triangular preconditioners perform very well. The triangular one is more robust and efficient.

 \item The diagonal preconditioner is more sensitive to the elliptic regularity result as the iteration steps are slowly increased, which is more evident in the three dimensional case; see the third column of Table \ref{EX2-1} and \ref{EX2-2}. For general domains, the $\bs H_0(\curl)\cap \bs H(\div)$ is a strict subspace of $\bs H^1$ and thus the approximation property may fail. On the other hand, the numerical effectiveness even in the partial regularity cases is probably due to the fact that the full regularity of elliptic equations always holds in the interior of the domain. Additional smoothing for near boundary region might compensate the loss of full regularity.

 \item Only the lowest order element is tested while our theory assumes the finite element space should contain the full linear polynomial to ensure the approximation property. This violation may also contribute to the slow increase of the iteration steps. We do not test the second type of edge element due to the complication of the prolongation operators. The lowest order edge element is the most popular edge element. For high order edge elements, we prefer to use the V-cycle for the lowest order element plus additional Gauss-Seidel smoothers in the finest level to construct preconditioners.
\end{enumerate}

\begin{example}[Three dimensional Maxwell equations with divergent-free constraint] \rm
We consider the lowest order discretization of Maxwell equations in the saddle point form \eqref{maxwellmat} and solve the regularized formulation \eqref{maxwell}. We test the block-diagonal preconditioner \eqref{diagMax} and triangular preconditioner \eqref{triMax}. We use the same setting as in Example 5.2 and report the iteration steps and corresponding CPU time in Table \ref{EX3-1} and \ref{EX3-2}.

From these results, we conclude our block-diagonal and block-triangular preconditioners works pretty well for the Maxwell equations discretized in the saddle point form. The iteration steps may increase but very slowly. Although the block-triangular preconditioner requires less iteration steps, the computational time is almost the same. This is due to the fact, now for the $(2,2)$ block, the block-triangular preconditioners requires a V-cycle for the scalar Laplacian while in the block-diagonal preconditioner it is only a diagonal approximation of the mass matrix.
\end{example}

%
%
%


\end{document}